\overfullrule=0pt
\documentclass[15pt]{amsart}
\usepackage{amscd, amsmath, amsthm, amssymb, MnSymbol}
\usepackage[all]{xy}

\newcommand\Bs{{\mathrm{Bs}}}
\newcommand\Cl{{\mathrm{Cl}}}
\newcommand\Pic{{\mathrm{Pic}}}
\newcommand\Eff{{\mathrm{Eff}}}
\newcommand\Exc{{\mathrm{Exc}}}
\newcommand\Nef{{\mathrm{Nef}}}
\newcommand\Mov{{\mathrm{Mov}}}
\newcommand\Cox{{\mathrm{Cox}}}
\newcommand\SAmp{{\mathrm{SAmp}}}
\newcommand\Aut{{\mathrm{Aut}}}


\newtheorem{theorem}{Theorem}[section]
\newtheorem{lemma}[theorem]{Lemma}
\newtheorem{proposition}[theorem]{Proposition}
\newtheorem{conjecture}[theorem]{Conjecture}
\newtheorem{question}[theorem]{Question}
\newtheorem{problem}[theorem]{Problem}
\newtheorem{corollary}[theorem]{Corollary}
\theoremstyle{definition}     
\newtheorem{definition}[theorem]{Definition}

\theoremstyle{remark}
\newtheorem{remark}[theorem]{Remark}
\numberwithin{equation}{section}




\begin{document}

\date{May 5, 2018}

\title[Examples of Mori dream surfaces of general type with $p_g=0$]{Examples of Mori dream surfaces \\ of general type with $p_g=0$}

\author[J. Keum]{JongHae Keum}
\address{School of Mathematics, Korea Institute for Advanced Study, Seoul 02455, Republic of Korea} \email{jhkeum@kias.re.kr}

\author[K.-S. Lee]{Kyoung-Seog Lee}
\address{Center for Geometry and Physics, Institute for Basic Science (IBS), Pohang 37673, Republic of Korea} \email{kyoungseog02@gmail.com}

\thanks{AMS Classification : 14E30, 14J29.\\
 The present work was supported by Institute for Basic Science (IBS-R003-Y1) and National Research Foundation of Korea (NRF)}

\keywords{Cox ring, Mori dream space, minimal surface of general type with $p_g=0,$ effective cone, nef cone, semiample cone.}

\begin{abstract}
In this paper we study effective, nef and semiample cones of minimal surfaces of general type with $p_g=0.$ We provide examples of minimal surfaces of general type with $p_g=0, 2 \leq K^2 \leq 9$ which are Mori dream spaces.  On these examples we also give explicit description of effective cones and all irreducible reduced curves of negative self-intersection. We also present non-minimal surfaces of general type with $p_g=0$ that are not Mori dream surfaces.
\end{abstract}
\maketitle


\section{Introduction}

Birational geometry of a variety constructed by GIT quotient is closely related to the variation of GIT. Cox rings are invariants playing an important role in this interaction. It turns out that Cox rings contain geometric, birational and arithmetic information of algebraic varieties.
It is well-known that if the Cox ring of an algebraic variety is finitely generated then the variety enjoy ideal properties in the aspect of minimal model program. Such varieties are called Mori dream spaces (cf. \cite{HK}).

Recently, Cox rings of various varieties have been studied by many authors, for example, surfaces, moduli spaces of rational curves with marked points, wonderful varieties and log Fano varieties.  See \cite{ADHL} and references therein for more detail. However it seems that not much is known about Cox rings of varieties of general type. Moreover there are not many literatures discussing Mori dream spaces of general type. Therefore it is interesting to find more examples of Mori dream spaces which are varieties of general type and to investigate their properties.

In general, it is hard to compute the effective, nef or semiample cones of a given variety of general type and hence it is difficult to determine whether it is a Mori dream space or not. Therefore we will focus on special classes of algebraic varieties of general type in this paper. Our candidates of Mori dream spaces of general type are minimal surfaces of general type with $p_g=0$. These surfaces have been studied for a long time and there are many works about them (see \cite{BCP} for a survey). Moreover, it turns out that there are some similarities between del Pezzo surfaces and minimal surfaces of general type with $p_g=0.$ It is well-known that del Pezzo surfaces are Mori dream spaces. Therefore it is natural to ask the following questions.

\begin{question}
Let $X$ be a minimal surface of general type with $p_g=0.$ When the effective cone of $X$ is rational polyhedral cone? When the nef cone and semiample cones of $X$ are the same? When the Cox ring of $X$ is finitely generated?
\end{question}

In this paper we provide examples of Mori dream surfaces which are minimal surfaces of general type with $p_g=0$ and $2 \leq K^2 \leq 9.$ We also compute their effective cones explicitly. It is easy to provide examples and compute their effective cones when $K^2$ is large but this task becomes more difficult when $K^2$ gets smaller. Indeed, we use specific structures of our examples to compute the effective cones and to prove that they are Mori dream surfaces.

\begin{theorem}\label{Main}
The following minimal surfaces of general type with $p_g=0$ are Mori dream surfaces and their effective cones are computed explicitly $($see the subsection in each case$)$.
\begin{enumerate}
\item Fake projective planes $($all have $K^2=9)$
\item Surfaces isogenous to a higher product of unmixed type $($all have $K^2=8)$
\item Inoue surfaces with $K^2=7$
\item Surfaces with $K^2=7$ constructed by Y. Chen
\item Kulikov surfaces with $K^2=6$
\item Burniat surfaces with $2 \leq K^2 \leq 6$
\item Product-quotient surfaces with $K^2=6, G=D_4 \times \mathbb{Z}/2\mathbb{Z}$
\item A family of Keum-Naie surfaces which are product-quotient surfaces with $K^2=4, G=\mathbb{Z}/4\mathbb{Z} \times \mathbb{Z}/2\mathbb{Z}$
\end{enumerate}
\end{theorem}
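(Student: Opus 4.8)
The plan is to reduce the assertion, for each of the eight families, to a single uniform criterion and then to verify that criterion by an explicit geometric analysis of each construction. The criterion I would use is the standard characterization of Mori dream surfaces: a smooth projective surface $X$ is a Mori dream space if and only if its pseudoeffective cone $\overline{\mathrm{Eff}}(X)$ is rational polyhedral and every nef divisor on $X$ is semiample (equivalently $\mathrm{Nef}(X)=\mathrm{SAmp}(X)$). Since all eight families consist of minimal surfaces of general type with $p_g=q=0$, the Lefschetz $(1,1)$ theorem gives $\NS(X)_{\mathbb{Q}}=H^2(X,\mathbb{Q})$, so $\rho(X)=b_2(X)=10-K^2$, and the whole problem lives in a vector space of dimension $10-K^2$, ranging from $1$ (fake projective planes) to $8$ (Burniat with $K^2=2$). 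For $\rho=1$ both conditions are automatic, which disposes of case (1) immediately, so the real content lies in the higher Picard number cases.

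The first main step, carried out family by family, is to pin down $\NS(X)$ together with its intersection form and then to enumerate \emph{all} irreducible curves of negative self-intersection. Most of these surfaces are presented through an explicit finite-group construction — either as a free or branched quotient of a product of two curves (as for the isogenous-to-a-product, product-quotient and Keum--Naie families) or as a finite abelian cover of a rational surface with prescribed branch data (as for the Burniat and Kulikov families) — and I would exploit this presentation throughout. For $X=Y/G$ one relates $\NS(X)_{\mathbb{Q}}$ to the invariant part $(\NS(Y)_{\mathbb{Q}})^{G}$ (corrected by exceptional classes where a resolution is needed), and the negative curves on $X$ arise from $G$-invariant negative curves on $Y$: fibres of the two projections, fixed curves of the action, branch and ramification loci, and their images. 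Adjunction $C^2=2p_a(C)-2-K_X\cdot C$ together with minimality ($K_X$ nef, no $(-1)$-curves) constrains these — rational negative curves satisfy $C^2\le -2$ — and I would use the explicit action to bound both the genus and the number of such curves.

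The second step is to show $\overline{\mathrm{Eff}}(X)$ is rational polyhedral. On a surface, Zariski decomposition gives $\overline{\mathrm{Eff}}(X)=\mathcal{P}+\Sigma$, where $\mathcal{P}$ is the (round) positive cone and $\Sigma$ is the cone spanned by the finitely many negative curves found above; the content is to verify, via the intersection matrix and the Hodge index theorem, that $\mathcal{P}\subseteq\Sigma$, so that $\overline{\mathrm{Eff}}(X)=\Sigma$ is rational polyhedral. This is a finite linear-algebra computation once the generating negative curves are known, and its dual yields $\mathrm{Nef}(X)=\Sigma^{\vee}$ explicitly. The final step is semiampleness: $K_X$ itself is semiample because $X$ is a minimal surface of general type (the pluricanonical system defines the canonical model), and for each remaining extremal nef ray I would exhibit a semiample representative — typically the pullback of an ample class under $Y\to X$ or the class of a fibration — checking that each nef-and-big class descends to a $\mathbb{Q}$-Cartier ample divisor once its orthogonal negative curves are contracted.

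The main obstacle is concentrated in the low-$K^2$, high-$\rho$ cases — above all the Burniat surfaces with $K^2=2,3$, where $\rho=8,7$ — in which one must genuinely prove that the explicitly listed negative curves are \emph{all} of them; a single missed negative curve would falsify the claimed effective cone, and I expect this enumeration, controlled by the branch-divisor combinatorics of the covering construction, to be the crux. The semiampleness check is the second potential pitfall: for any nef-and-big class whose orthogonal negative curves do not form a configuration contracting to a rational singularity (cf. Zariski's example of a nef and big divisor that is not semiample), one must verify semiampleness directly rather than inferring it from bigness and nefness.
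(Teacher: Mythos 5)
Your overall framework---the Artebani--Hausen--Laface criterion ($Eff(X)$ rational polyhedral and $Nef(X)=SAmp(X)$), the count $\rho(X)=b_2(X)=10-K^2$, and the immediate disposal of case (1)---agrees with the paper, and your treatment of case (2) via the two fibrations is also the paper's. But for cases (3)--(8) you propose to work entirely on $X$ itself, and this leaves your two self-identified cruxes without an actual mechanism. The paper's workhorse, which your plan gestures at but never states, is Proposition~\ref{criterion}: for each of the families (3)--(7) there is a finite flat morphism $\pi\colon X\to Y$ (a bidouble or $(\mathbb{Z}/3\mathbb{Z})^2$-cover of a weak del Pezzo surface or of a contraction of one, or in case (7) the quotient $X\to X/Z$ by the central involutions) with the special property that $\pi^*\colon Pic(Y)_{\mathbb{R}}\to Pic(X)_{\mathbb{R}}$ is an \emph{isomorphism}. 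This single fact transports $Eff$, $Nef$ and $SAmp$ in both directions, gives ``$X$ is a Mori dream space iff $Y$ is'' (Okawa's theorem for one direction, pullback of semiample classes for the other), and---this is your first crux---forces every negative curve on $X$ to be the non-split pullback of a negative curve on $Y$, where the enumeration is classical ($(-1)$- and $(-2)$-curves on weak del Pezzo surfaces, Theorem~\ref{wdP} and the lemmas following it). Without the equality of Picard numbers your phrase ``use the explicit action to bound the genus and the number of such curves'' is not a method: on a surface of general type adjunction gives no a priori bound on the genus of a negative curve, and completeness of the list is exactly what is at stake.

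The second gap is semiampleness. Your proposed check---contract the negative curves orthogonal to a nef-and-big class---does not apply to the extremal nef rays that actually occur: in cases (7) and (8) several extremal nef classes have self-intersection $0$ (e.g.\ $G_2+E_4+F_2$ in case (8)), so they are not big, and their orthogonal complements contain elliptic curves, precisely the Zariski-type configuration you rightly worry about. The paper closes this by (a) pulling back semiample classes from $Y$ in cases (3)--(6); (b) in case (7), showing the quotient $X/Z$ has nef and big anticanonical class, hence is a Mori dream surface by \cite{BCHM}, and applying Proposition~\ref{criterion} again; (c) in case (8), writing each extremal nef class as two numerically equivalent effective divisors with \emph{disjoint} supports (e.g.\ $G_2+E_4+F_2\sim_{num} G_1+E_2+F_1$), which forces the stable base locus to be empty. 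Finally, a logical economy you miss: once one verifies the finite duality check that every class nonnegative against the candidate negative curves lies in their cone $\Sigma$ (your ``$\mathcal{P}\subseteq\Sigma$'' check, in the form $\Sigma^{\vee}\subseteq\Sigma$), the completeness of the negative-curve list is \emph{automatic}---an irreducible curve not on the list meets every listed curve nonnegatively, hence lies in $\Sigma^{\vee}\subseteq\Sigma\subseteq Eff(X)$, and a curve of negative square inside a rational polyhedral effective cone must span one of its finitely many extremal rays. This is exactly how the paper proves $Eff(X)$ is generated by the eight listed curves in case (8), so the enumeration you flag as the hardest step reduces to linear algebra once the candidate cone is rich enough in nef classes.
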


In particular, there is a minimal surface of general type with $p_g=0$ which is a Mori dream space for any $2 \leq K^2 \leq 9.$
On the other hand, we do not know an example of minimal surface of general type with $p_g=0$ which is not a Mori dream space. It would be an interesting task to determine which minimal surfaces of general type with $p_g=0$ are Mori dream surfaces.

\begin{problem}
Classify Mori dream spaces among minimal surfaces of general type with $p_g=0$.
\end{problem}

We believe that an answer to this problem could be an important step toward classification of surfaces of general type with $p_g=0$, which has remained as one of the most important and difficult problems in algebraic geometry.

It is easy to see that for any Mori dream surface $X$ the bounded negativity conjecture holds, that is, there is a nonnegative number $b_X$ such that $C^2 \geq -b_X$ for any irreducible reduced curve $C$. (For historical background of the conjecture we refer to \cite{BHKKMSRS}.)

For the examples in Theorem \ref{Main}, by refining the explicit computation of effective cones we are able to compute all negative curves explicitly in each case. Here a negative curve means an irreducible reduced curve with negative self-intersection.

\begin{theorem}\label{main}[Negative Curves]
 For each surface in Theorem \ref{Main} all negative curves are computed explicitly. In each case $(1)-(8)$ the number of negative curves and the list of the pairs $(C^2, p_a(C))$ for negative curves $C$ are given as follows. Here  $m(C^2, p_a(C))$ means $m$ copies of $(C^2, p_a(C)).$
\begin{enumerate}
\item None
\item None
\item $3: 2(-1,1), (-1,2)$
\item $4: (-1,1), (-1,2), (-1,3), (-4,2)$
\item $6: 6(-1,1)$
\item Burniat surfaces with $2 \leq K^2 \leq 6$
\begin{enumerate}
\item $6 : 6(-1,1)$ if $K^2 = 6;$
\item $10 :  9(-1,1), (-4,0)$ if $K^2 = 5;$
\item $16 : 12(-1,1), 4(-4,0)$ if non-nodal with $K^2 = 4;$
\item $13: 10(-1,1), 2(-4,0), (-2,0)$ if nodal with $K^2 = 4;$
\item $15 : 9(-1,1), 3(-4,0), 3(-2,0)$ if $K^2 = 3;$
\item $16: 6(-1,1), 6(-2,0), 4(-4,0)$  if $K^2 = 2;$
\end{enumerate}
\item $4: 2(-2, 0), (-1,1), (-1, 2)$
\item $8: 4(-1, 1), 4(-2, 0)$
\end{enumerate}
\end{theorem}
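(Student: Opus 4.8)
The plan is to prove Theorem~\ref{main} case by case, since the theorem asserts that for \emph{each} family listed in Theorem~\ref{Main} we can enumerate all negative curves together with their numerical invariants $(C^2, p_a(C))$. The unifying tool is the explicit description of the effective cone already obtained in the proof of Theorem~\ref{Main}: once $\overline{\text{Eff}}(X)$ is known to be rational polyhedral, every negative curve must be (a multiple of) an extremal ray of the effective cone, because a curve $C$ with $C^2<0$ cannot be written nontrivially as a sum of effective classes that are not all proportional to $C$. So the skeleton of each case is: (i) recall the generators of the effective cone; (ii) identify which extremal rays are represented by genuinely negative curves (versus rays on the boundary coming from, e.g., the canonical class or nef classes of square zero); and (iii) for each such curve compute $C^2$ directly from the intersection form and $p_a(C)$ from the adjunction formula $2p_a(C)-2 = C^2 + C\cdot K_X$.

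First I would dispatch the trivial cases (1) and (2): fake projective planes and surfaces isogenous to a higher product of unmixed type have Picard number one (or their effective cone is generated by an ample class), so there are no negative curves, matching the ``None'' entries. For the remaining cases I would exploit the group-quotient structure. Most of these surfaces arise as $Y/G$ where $Y$ is a product-quotient or bidouble-cover construction; the negative curves downstairs are images of $G$-orbits of rational or elliptic curves upstairs, and adjunction on the quotient (taking into account the ramification/branch divisors) pins down $(C^2, p_a(C))$. Concretely, for the Burniat surfaces in case (6) I would use the standard description as bidouble covers of (blow-ups of) $\mathbb{P}^2$ branched over configurations of lines: the negative curves are the pullbacks of the exceptional curves and of the components of the branch locus, and their self-intersections scale predictably as one degenerates from $K^2=6$ down to $K^2=2$ by allowing the line configuration to become more special (nodal degenerations producing the extra $(-2,0)$ and $(-4,0)$ curves). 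This explains the combinatorial pattern in the six sub-entries (a)--(f).

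For the Inoue surfaces (3), Chen's surfaces (4), Kulikov surfaces (5), and the two product-quotient families (7) and (8), the method is the same but lighter: one lists the ramification curves and the (finitely many) $(-1)$- or $(-2)$-curves that survive in the quotient, computes their squares via the projection formula, and reads off the genera. The arithmetic genus values $p_a(C)>0$ appearing in cases (3), (4), and (7)--(8) indicate that some negative curves are not rational but are images of positive-genus curves fixed or permuted by $G$; these come from the fibers of the isotrivial fibration structure of the product-quotient surface, and adjunction again gives the stated $(C^2,p_a(C))$.

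I expect the main obstacle to be \textbf{completeness}, i.e. proving that the listed curves are \emph{all} the negative curves, not merely that they \emph{are} negative curves. For this I would argue that any negative curve spans an extremal ray of the (already computed, rational polyhedral) effective cone, and then check that each extremal ray contains at most one irreducible reduced negative curve---so the finite list of extremal rays bounds the list of negative curves exactly. The delicate point is that an extremal ray may be generated by a nef class of self-intersection zero or by $K_X$ (which is big and nef, hence nonnegative on curves) rather than by a negative curve, so I must verify for each ray whether its primitive effective representative actually has negative square. This boundary analysis, together with confirming irreducibility and reducedness of each representative (so that no listed curve secretly splits), is where the case-by-case geometric input from the covering construction does the real work.
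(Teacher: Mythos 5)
Your proposal follows essentially the same route as the paper: negative curves are precisely the extremal rays of the explicitly computed rational polyhedral effective cones (the paper invokes Proposition~1.1 of \cite{AL} for $\rho \geq 3$, which disposes of your worry about extremal rays spanned by nef classes or by $K_X$, since for $\rho \geq 3$ a polyhedral effective cone is generated by negative-curve classes), and the invariants $(C^2, p_a(C))$ are read off from the pullback and adjunction computations already carried out case by case in the proof of Theorem~\ref{Main}. The only slip is your parenthetical claim that surfaces isogenous to a higher product of unmixed type have effective cone generated by an ample class: in fact it is generated by the two square-zero fiber classes, so every effective class is nef and hence has nonnegative self-intersection, which is exactly the paper's remark that the intersection matrix is hyperbolic.
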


Finally we present non-minimal surfaces of general type with $p_g=0$ that are not Mori dream surfaces. The first named author and Naie (\cite{Keum}, \cite{Naie}) constructed a family of minimal surfaces of general type with $p_g=0$ and $K^2=4$ as double covers of an Enriques surface with eight nodes. The branch locus is the disjoint union of the eight nodes and a curve $C$ with $C^2=8$. The curve $C$ may have simple singularities and is linearly equivalent to $F_1+F_2$ with $F_1F_2=4$, where each $|F_i|$ defines an elliptic fibration.  These surfaces, called Keum-Naie surfaces, were discovered by the first named author and later investigated by Naie. If $C$ has simple singularities, then the double cover is a Keum-Naie surface with rational double points. Bauer and Catanese \cite{BC2} proved that the connected component of the Gieseker moduli space corresponding to Keum-Naie surfaces is irreducible, normal, unirational of dimension 6.   Indeed, Enriques surfaces with such a configuration of curves form a 2-dimensional moduli and such a curve $C$ moves in a 4-dimensional linear system.

\bigskip

Let $S$ be an Enriques surface with a disjoint union of nine curves that are eight $(-2)$-curves and a curve $C$, as described as above. Let
$$Y \to S$$ be the double cover of $S$ branched along the nine curves. The surface $Y$ has eight $(-1)$-curves lying over the eight $(-2)$-curves, thus $Y$ is a blow up of a Keum-Naie surface at suitably chosen eight points. By the classification of Kond\=o  \cite{Kon}  Enriques surfaces with finite automorphism groups form a 1-dimensional family. Therefore, if $S$ is general, then $\Aut(S)$ is infinite, hence  $S$ is not a Mori dream space. It follows that $Y$ is not a Mori dream space.

\begin{theorem}\label{notMori}
Let $S$ be an Enriques surface with a disjoint union of nine curves that are eight $(-2)$-curves and a curve $C$ with  simple singularities, as described as above. Let
$Y$ be the double cover of $S$ branched along the nine curves. If $S$ is general, then $Y$ is a non-minimal surface of general type with $p_g=0$ and $K_Y^2=-4$  that is not a Mori dream surface.
\end{theorem}

\bigskip

{\bf Notations.} We will work over $\mathbb{C}.$ When $G$ is an abelian group, then $G_{\mathbb{R}}$(resp. $G_{\mathbb{Q}}$) will denote $G \otimes_{\mathbb{Z}} {\mathbb{R}}$(resp. $G \otimes_{\mathbb{Z}} {\mathbb{Q}}$). A variety will mean a normal projective variety. If $X$ be a normal $\mathbb{Q}$-factorial variety, then we will use the following notations. \\
$K_X$ : the canonical divisor on $X.$ \\
$\Cl(X)$ : divisor class group of $X.$ \\
$\Pic(X)$ : Picard group of $X.$ \\
$\rho(X)$ : Picard number of $X.$ \\
$\Eff(X)$ : the effective cone of $X$. \\
$\Nef(X)$ : the nef cone of $X$. \\
$\Mov(X)$ : the movable cone of $X$. \\
$\SAmp(X)$ : the semiample cone of $X$. \\
Let $D_1, D_2$ are two divisors on $X.$ We write $D_1 \sim D_2$(resp. $D_1 \sim_{num} D_2$) to denote that they are linearly(resp. numerically) equivalent. \\

{\bf Acknowledgements.} The second named author thanks Ingrid Bauer, Fabrizio Catanese, Sung Rak Choi, June Huh, DongSeon Hwang, Jihun Park, Jinhyung Park, Yongjoo Shin and Joonyeong Won for helpful conversations and discussions. Part of this work was done when he was a research fellow of KIAS.

\section{Preliminaries}

In this section we recall basic definitions and results about effective, nef and semiample cones of algebraic surfaces, Mori dream spaces, especially Mori dream surfaces. We also prove a useful criterion to provide many new examples of Mori dream surfaces.

\subsection{Effective, nef and semiample cones of surfaces}

Effective, nef, movable and semiample cones of algebraic varieties are key tools of birational geometry (cf. \cite{KMM,KM}).

\begin{definition}
Let $X$ be a normal projective variety and $D$ be a Weil divisor on $X.$ We will use $\Bs|D|$ to denote the base locus of $|D|.$ \\
(1) The stable base locus is the intersection of all base locus of multiples of $D,$ i.e.
$$ \mathbb{B} |D|:= \bigcap_{k \in \mathbb{Z}_{\geq 1}} \Bs|kD|. $$
(2) The effective cone $\Eff(X)$ is the convex cone generated by effective divisors. We will use $\overline{\Eff(X)}$ to denote the closure of $\Eff(X)$ in $\Cl(X)_{\mathbb{R}}.$ \\
(3) The nef cone $\Nef(X)$ is the convex cone generated by nef divisors. \\
(4) A Weil divisor $D$ is movable if $\mathbb{B} |D|$ has codimension at least 2. The moving cone $\Mov(X)$ is the convex cone generated by movable divisors. \\
(5) A Weil divisor $D$ is semiample if $\mathbb{B} |D|$ is empty. The semiample cone $\SAmp(X)$ is the convex cone generated by semiample divisors.
\end{definition}

Let $X$ be a smooth projective surface with $q=0.$ Then the Picard group $\Pic(X)$ is a finitely generated abelian group and $\Pic(X)_{\mathbb{R}}$ is a finite dimensional vector space.

\begin{proposition}\cite{AL}
Let $X$ be a smooth projective surface with $q=0.$ Then we have the following inclusions.
$$ \SAmp(X) \subset \Mov(X) \subset \Nef(X) \subset \overline{\Eff(X)}  $$
\end{proposition}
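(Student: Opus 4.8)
The plan is to prove the chain one link at a time, reading from left to right, in each case reducing to a statement about a single generating divisor and then invoking convexity of the target cone.

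The first inclusion $SAmp(X) \subset Mov(X)$ is formal. If $D$ is semiample then $\mathbb{B}|D| = \emptyset$ by definition, and the empty set vacuously has codimension at least $2$, so $D$ is movable. Thus every divisor generating $SAmp(X)$ already generates $Mov(X)$, and since $Mov(X)$ is a convex cone the inclusion follows immediately.

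The central inclusion $Mov(X) \subset Nef(X)$ is where the surface hypothesis does the real work, and I would prove that every movable class is nef. Let $D$ be movable, so $\mathbb{B}|D| = \bigcap_{k \ge 1} Bs|kD|$ has codimension at least $2$ in the surface $X$, i.e. it is a finite set of points. Fix an arbitrary irreducible curve $C$; the goal is $D \cdot C \ge 0$. Since $C$ is one-dimensional it cannot be contained in the finite set $\mathbb{B}|D|$; by the contrapositive, were $C$ contained in $Bs|kD|$ for every $k$ it would lie in the intersection, so there exists some $k \ge 1$ with $|kD| \ne \emptyset$ and $C \not\subset Bs|kD|$. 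Choosing an effective divisor $D' \in |kD|$ whose support omits $C$ and writing $D' = \sum_i a_i C_i$ with $a_i > 0$ and each $C_i \ne C$ irreducible, we get $C_i \cdot C \ge 0$ for all $i$, and since $D' \sim kD$ (linear, hence numerical, equivalence) preserves intersection numbers,
$$ kD \cdot C = D' \cdot C = \sum_i a_i\,(C_i \cdot C) \ge 0. $$
As $C$ was arbitrary, $D$ is nef; by convexity of $Nef(X)$ the inclusion $Mov(X) \subset Nef(X)$ follows.

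For the last inclusion $Nef(X) \subset \overline{Eff(X)}$ I would fix an ample divisor $H$. Given a nef class $D$, Kleiman's criterion makes $D + \epsilon H$ ample for every $\epsilon > 0$; an ample class is big, so a large multiple of $D + \epsilon H$ has a nonzero section and $D + \epsilon H \in Eff(X) \subset \overline{Eff(X)}$. Letting $\epsilon \to 0$ and using that $\overline{Eff(X)}$ is closed gives $D \in \overline{Eff(X)}$. The two outer inclusions are essentially formal, so the \emph{main obstacle} is the middle one: the delicate point is passing from ``$\mathbb{B}|D|$ is small'' to ``some individual $Bs|kD|$ avoids $C$'', which is exactly where irreducibility of $C$ and the dimension count available on a surface are indispensable. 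The hypothesis $q = 0$ enters only through the paragraph preceding the statement, guaranteeing that $Pic(X)_{\mathbb{R}} = Cl(X)_{\mathbb{R}}$ is finite-dimensional so that all four cones are genuine closed convex cones in one ambient vector space.
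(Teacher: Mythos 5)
Your proof is correct, but there is nothing in the paper to compare it against: the proposition is quoted from Artebani--Laface \cite{AL} without any proof, so you have supplied an argument the paper omits entirely. Your three steps are the standard ones and each is sound. Semiample $\Rightarrow$ movable is immediate from the paper's definitions (an empty stable base locus vacuously has codimension $\geq 2$); movable $\Rightarrow$ nef correctly exploits that on a surface $\mathbb{B}|D|$ is a finite set of points, so an irreducible curve $C$ escapes some $Bs|kD|$, yielding $D' \in |kD|$ with $C$ not among its components and hence $kD \cdot C = D' \cdot C \geq 0$; nef $\Rightarrow$ pseudoeffective by perturbing with an ample class is routine. You also correctly identify the middle inclusion as the only place where the geometry of surfaces (codimension $\geq 2$ means dimension $0$) is used, and your remark on the role of $q=0$ matches exactly how the paper uses that hypothesis in the sentence preceding the proposition. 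Two cosmetic repairs: ``whose support omits $C$'' should read ``whose support does not contain $C$'' (the chosen member of $|kD|$ may well meet $C$, which is all you need); and in the last step $D + \epsilon H$ is only an $\mathbb{R}$-class, so ``a large multiple has a nonzero section'' does not literally parse --- either take $D$ an integral nef generator and $\epsilon$ rational, so that some multiple of $D+\epsilon H$ is an honest ample divisor and hence effective, or note that the ample cone is generated by integral ample classes; either fix is one line.
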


In order to check whether a given surface is Mori dream or not, the first thing to do is to determine whether the effective cone of the surface is a rational polyhedral cone or not. Sometimes we can compute the effective cone of a surface explicitly. Let us recall a helpful proposition of Artebani and Laface.

\begin{proposition}\cite[Proposition 1.1]{AL}
Let $X$ be a smooth projective surface with $\rho(S) \geq 3$ and its effective cone is polyhedral cone. Then
$$ \Eff(X) = \sum_{[C] \in \Exc(X)} \mathbb{R}_{\geq 0}[C] $$
where $\Exc(X)$ is the set of classes of integral curves $C$ of $X$ with $C^2 < 0.$
\end{proposition}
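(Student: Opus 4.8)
The plan is to identify the extremal rays of the (assumed) polyhedral cone $Eff(X)$ with the classes of negative curves. I would work in $Pic(X)_{\mathbb{R}}$ equipped with the intersection pairing, which by the Hodge index theorem has signature $(1,\rho(X)-1)$; fix an ample class $H$ and write $\mathcal{P}=\{x : x^2>0,\ x\cdot H>0\}$ for the positive cone. The guiding principle is that $\mathcal P$ sits inside $Eff(X)$, so any extremal ray must lie on the boundary, and for $\rho(X)\geq 3$ the boundary of $\mathcal P$ is too ``round'' to meet an extremal ray except where $C^2<0$.

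First I would record two standard containments. By Riemann--Roch, an integral class $x$ with $x^2>0$ and $x\cdot H>0$ has $mx$ effective for $m\gg 0$: the term $\tfrac12 m^2 x^2$ dominates $\chi(mx)$, while $h^0(K_X-mx)=0$ because $(K_X-mx)\cdot H<0$ for large $m$. Hence $\mathcal P\subseteq Eff(X)$, and since $Eff(X)$ is closed (being polyhedral) also $\overline{\mathcal P}\subseteq Eff(X)$, so the interior of $\mathcal P$ lies in the interior of $Eff(X)$. Being polyhedral, $Eff(X)$ is the sum of its finitely many extremal rays. Moreover, because $Eff(X)$ is generated by the classes of irreducible curves, each extremal ray $R$ is spanned by the class $[C]$ of an irreducible curve: writing a rational point of $R$ as $\tfrac1m[D]$ with $D=\sum a_j C_j$ effective, the face property of $R$ forces each $[C_j]\in R$.

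The heart of the argument is then to show $C^2<0$ for every such extremal $[C]$. Since $[C]$ lies on $\partial Eff(X)$ it is not in the interior of $\mathcal P$, so $C^2\leq 0$. Suppose $C^2=0$. As $C$ is irreducible we have $D\cdot C\geq 0$ for every irreducible curve $D$, so $Eff(X)\subseteq\{x: x\cdot C\geq 0\}$ and $C^\perp$ is a supporting hyperplane through $[C]$. Now the hypothesis $\rho(X)\geq 3$ enters through a dimension count of supporting functionals. Since $R=\mathbb{R}_{\geq 0}[C]$ is a one-dimensional face of the full-dimensional polyhedral cone $Eff(X)$, the normal cone of $R$ (the supporting functionals vanishing on $R$) has dimension $\rho(X)-1\geq 2$. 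On the other hand $[C]$ is a nonzero boundary point of $\overline{\mathcal P}$ with $C^2=0$, and the boundary nappe of $\overline{\mathcal P}$ is strictly convex transverse to its null rulings, so $\overline{\mathcal P}$ admits a \emph{unique} supporting hyperplane at $[C]$, namely $C^\perp$. Every supporting functional of $Eff(X)\supseteq\overline{\mathcal P}$ that vanishes at $[C]$ also supports $\overline{\mathcal P}$ there, hence is proportional to $C$; this forces the normal cone of $R$ to be one-dimensional, contradicting $\rho(X)-1\geq 2$. Therefore $C^2<0$.

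Finally I would assemble the conclusion: each extremal ray generator $[C]$ satisfies $C^2<0$, so $[C]\in Exc(X)$, giving $Eff(X)=\sum_R \mathbb{R}_{\geq 0}[C]\subseteq \sum_{[C]\in Exc(X)}\mathbb{R}_{\geq 0}[C]$, while the reverse inclusion is immediate since every curve in $Exc(X)$ is effective. I expect the main obstacle to be the strict-convexity/uniqueness step for the light cone and its clean interaction with the normal-cone dimension count: this is exactly where $\rho(X)\geq 3$ is used, and where the statement genuinely fails for $\rho(X)=2$, since there an extremal ray may lie on the null cone with $C^2=0$.
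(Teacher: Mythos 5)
This proposition is quoted by the paper from Artebani--Laface \cite[Proposition 1.1]{AL} without any proof, so there is no in-paper argument to compare against; judged on its own merits, your proof is correct, and it is essentially the standard argument behind the cited result: Riemann--Roch puts the positive cone inside $Eff(X)$, forcing $C^2\le 0$ on extremal irreducible classes, and the case $C^2=0$ is killed by playing the $(\rho-1)$-dimensional normal cone of an extremal ray of a full-dimensional polyhedral cone against the uniqueness of the supporting hyperplane $C^\perp$ of the round quadric cone at a null ray --- exactly where $\rho(X)\ge 3$ enters, and your remark that the statement fails for $\rho=2$ (e.g.\ fake quadrics, whose effective cone is spanned by two classes of self-intersection $0$) is consistent with how the paper uses the result. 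The one imprecision to fix: the intersection form is nondegenerate of signature $(1,\rho-1)$ on $N^1(X)_{\mathbb{R}}$, not on $Pic(X)_{\mathbb{R}}$ when $q(X)>0$, so for an arbitrary smooth projective surface the whole argument should be run modulo numerical equivalence (in the paper's applications $q=0$, so the distinction is invisible).
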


Let us define a negative curve on a smooth projective surface as follows.

\begin{definition}
Let $X$ be a smooth projective surface. A negative curve $C$ is an irreducible reduced curve on $X$ such that $C^2 < 0.$
\end{definition}

It is well known that the nef cone is dual to the closure of the effective cone. Therefore if the $\Eff(X)$ is a rational polyhedral cone then the $\Nef(X)$ is also a rational polyhedral cone. In this case, it is sufficient to prove that extremal rays of $\Nef(X)$ is semiample to prove that $X$ is Mori dream space. Let us recall the following result which is helpful to prove a given divisor is semiample.

\subsection{Mori dream space}

Hu and Keel studied relation between minimal model program and variation of GIT and defined the notion of Mori dream space in \cite{HK}. Let us recall the definition of Mori dream space.

\begin{definition}\cite{HK}
A variety $X$ is a Mori dream space if \\
(1) $X$ is a $\mathbb{Q}$-factorial variety and $h^1(X,\mathcal{O}_X)=0,$ \\
(2) the nef cone of $X$ is the convex cone generated by finitely many semiample classes. \\
(3) there are finitely many birational maps $\phi_i : X \dashedrightarrow X_i, 1 \leq i \leq m$ which are isomorphisms in codimension 1, $X_i$ are varieties satisfying (1), (2) and if $D$ is a movable divisor then there is an index $1 \leq i \leq m$ and a semiample divisor $D_i$ on $X_i$ such that $D=\phi^*_iD_i.$
\end{definition}

Let us recall the definition of Cox ring.

\begin{definition}
Let $X$ be a normal projective $\mathbb{Q}$-factorial variety with finitely generated $\Cl(X)$. Let $\Gamma \subset \Cl(X)$ be a free Abelian group such that the inclusion map induces an isomorphism $\Gamma \otimes \mathbb{Q} \cong \Cl(X) \otimes \mathbb{Q}.$ Then a Cox ring of $X$(associated to $\Gamma$) is a multi-graded ring defined as follows.
$$ \Cox(X) = \bigoplus_{D \in \Gamma}H^0(X,\mathcal{O}_X(D)). $$
\end{definition}

\begin{remark}\cite{HK, Okawa}
Note that the definition of a Cox ring depends on the choice of $\Gamma \subset \Cl(X).$ However it is well-known that the finite-generation of a Cox ring of $X$ does not depend on the choice of $\Gamma \subset \Cl(X).$
\end{remark}

It is well-known that a variety $X$ is a Mori dream space if and only if the Cox ring of $X$ is finitely generated (cf. \cite{HK}).

\begin{theorem}\cite{HK}
Let $X$ be a $\mathbb{Q}$-factorial variety such that $\Pic(X)$ is a finitely generated abelian group. Then the followings are equivalent. \\
(1) $X$ is a Mori dream space. \\
(2) $\Cox(X)$ is a finitely generated ring.
\end{theorem}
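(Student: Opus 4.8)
The plan is to prove the two implications by passing through the Geometric Invariant Theory (GIT) picture attached to the Cox ring, in the spirit of variation of GIT. Throughout I would set $R = Cox(X)$ and let $G = \Hom(Cl(X), \bbG_m)$ be the diagonalizable group acting on $\Spec R$ through the $Cl(X)$-grading. For the implication $(2) \Rightarrow (1)$, I would start from a finitely generated $R$ and form the affine variety $\overline{X} = \Spec R$ with its $G$-action. First I would exhibit $X$ as a GIT quotient: for a $G$-linearization $L$ determined by an ample class, the semistable locus descends to $X$, giving an isomorphism $X \cong \overline{X}^{ss}(L)/\!\!/ G$. The key input is then the theory of variation of GIT quotients (Thaddeus, Dolgachev--Hu): the cone of $G$-ample classes carries a finite wall-and-chamber decomposition, and crossing a wall alters the quotient by a small birational modification. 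I would identify the finitely many chambers with small $\bbQ$-factorial modifications $\phi_i \colon X \dashedrightarrow X_i$, check that each $X_i$ again satisfies condition (1) of the definition, and match the chamber decomposition of the $G$-ample cone with the tiling of $Mov(X)$ by the cones $\phi_i^* Nef(X_i)$. Since the quotient attached to a chamber is built as a $\mathrm{Proj}$, the class cutting out that chamber is automatically semiample; applying this to the chamber containing $Nef(X)$ shows that $Nef(X)$ is rational polyhedral and generated by semiample classes, yielding conditions (2) and (3) of the Mori dream space definition (the hypotheses $h^1(X,\calO_X)=0$ and $\bbQ$-factoriality being compatible with the quotient presentation).

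For the implication $(1) \Rightarrow (2)$, I would run the argument in reverse, assembling $R$ from finitely many finitely generated pieces. By hypothesis $Mov(X)$ is the union of the finitely many rational polyhedral chambers $\phi_i^* Nef(X_i)$, and each $Nef(X_i)$ is generated by finitely many semiample classes. For a semiample divisor $D$ the section ring $\bigoplus_{k \geq 0} H^0(X, \calO_X(kD))$ is finitely generated, because a multiple of $D$ is base-point free and the ring is the homogeneous coordinate ring of the image of the associated morphism. I would then assemble these contributions chamber by chamber and pass from the movable cone to the full effective cone by peeling off fixed components: an effective class outside $Mov(X)$ decomposes as a movable part plus an effective boundary contribution, and adjoining finitely many generators for these boundary classes to the generators coming from the chambers produces a finite generating set for $R$.

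I expect the main obstacle to lie in the precise dictionary between the GIT chamber structure and the birational geometry in the direction $(2) \Rightarrow (1)$: one must show that the walls of the $G$-ample cone correspond exactly to the flopping walls of $Mov(X)$, that no chamber is lost or duplicated, and that the modifications across walls are genuinely small, i.e. isomorphisms in codimension one, rather than divisorial contractions. In the converse direction the delicate point is global rather than local: each chamber individually contributes a finitely generated graded piece, but one must verify that \emph{finitely many} such pieces, together with the fixed-component generators, suffice to generate $R$ as a ring over all of $\overline{Eff(X)}$, which is exactly where finiteness of the chamber decomposition is indispensable.
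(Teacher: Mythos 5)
The paper does not prove this theorem at all: it is quoted as a black box from Hu--Keel \cite{HK}, so there is no internal argument to compare your proposal against. What you have written is, in substance, an outline of the original proof in \cite{HK} (their Proposition 2.9): $(2)\Rightarrow(1)$ by realizing $X$ as a GIT quotient of $\Spec Cox(X)$ by the torus $\Hom(Cl(X),\bbG_m)$ and invoking the finite variation-of-GIT chamber structure, with semiampleness on the quotients coming from the Proj construction; and $(1)\Rightarrow(2)$ by Zariski-type finite generation of multigraded section rings of semiample divisors on the small modifications $X_i$ (using that $\phi_i$ is an isomorphism in codimension one, so sections of divisors in $\phi_i^*Nef(X_i)$ may be computed on $X_i$), followed by adjoining canonical sections of the finitely many fixed prime divisors to pass from $Mov(X)$ to $Eff(X)$. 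The two obstacles you flag---that only walls interior to the movable cone give small modifications while the others give divisorial contractions or fiber-type quotients, and that finitely many chamber rings together with the fixed-part generators really generate all of $Cox(X)$---are precisely the points carrying the technical weight in Hu--Keel's identification of Mori chambers with GIT chambers, so your plan retraces the standard route rather than offering a new one.
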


Let us recall Okawa's theorem which we will use frequently in this paper.

\begin{theorem}\cite{Okawa}
Let $f : X \to Y$ be a surjective morphism between normal $\mathbb{Q}$-factorial projective varieties and $X$ be a Mori dream space. Then $Y$ is also a Mori dream space.
\end{theorem}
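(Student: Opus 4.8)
The plan is to verify, via the Hu--Keel criterion recalled above, that $Y$ is a Mori dream space by proving that a Cox ring $\mathrm{Cox}(Y)$ is finitely generated; the auxiliary hypotheses are cheap, since $Y$ is $\mathbb{Q}$-factorial by assumption, its class group is finitely generated (it is controlled by the generically finite and contraction parts of $f$ starting from the finitely generated $\Pic(X)$), and $h^1(Y,\mathcal{O}_Y)=0$ because the pullback $H^1(Y,\mathcal{O}_Y)\to H^1(X,\mathcal{O}_X)=0$ is injective, being split by a trace in the generically finite range and by the edge map of the Leray spectral sequence in the connected-fibre range. The real content is thus the finite generation of $\mathrm{Cox}(Y)$, and I would attack it by transporting finite generation of $\mathrm{Cox}(X)$ across $f$.

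First I would reduce to two model situations by Stein factorization, writing $f$ as $X \xrightarrow{h} Z \xrightarrow{g} Y$ with $h$ having connected fibres and $g$ finite (and, granting for the moment that $Z$ may be taken $\mathbb{Q}$-factorial, a point I return to below). Since $X$ is a Mori dream space and $h_{*}\mathcal{O}_X=\mathcal{O}_Z$, the projection formula identifies $H^0(Z,\mathcal{O}_Z(D))$ with $H^0(X,\mathcal{O}_X(h^{*}D))$ for Cartier $D$. Restricting the $\Cl(X)$-grading of $\mathrm{Cox}(X)$ to the sublattice $h^{*}\Gamma_Z$ then realizes a Cox ring of $Z$ as the subring $\bigoplus_{D\in h^{*}\Gamma_Z}H^0(X,\mathcal{O}_X(D))$ of $\mathrm{Cox}(X)$. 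This subring is precisely the ring of invariants of $\mathrm{Cox}(X)$ under the diagonalizable group dual to $\Cl(X)/h^{*}\Gamma_Z$, so it is finitely generated by the classical theorem of Hilbert and Nagata on invariants of linearly reductive groups acting on finitely generated $\mathbb{C}$-algebras. Hence $Z$ is a Mori dream space.

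It then remains to descend along the finite cover $g\colon Z\to Y$. Here I would show that $g^{*}$ makes $\mathrm{Cox}(Z)$ a finite module over the image of $\mathrm{Cox}(Y)$: graded piece by graded piece, $H^0(Z,\mathcal{O}_Z(g^{*}D))$ is a finite $H^0(Y,\mathcal{O}_Y(D))$-module because $g_{*}\mathcal{O}_Z$ is a coherent $\mathcal{O}_Y$-module, and these must be assembled into a single module-finiteness statement after choosing compatible lattices of classes. Granting this, one has a tower $\mathbb{C}\subset \mathrm{Cox}(Y)\subset \mathrm{Cox}(Z)$ in which $\mathrm{Cox}(Z)$ is a finitely generated $\mathbb{C}$-algebra and a finite $\mathrm{Cox}(Y)$-module, so the Artin--Tate lemma forces $\mathrm{Cox}(Y)$ to be a finitely generated $\mathbb{C}$-algebra, whence $Y$ is a Mori dream space.

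The main obstacle is not the invariant-theoretic step but the bookkeeping needed to make these two reductions \emph{exact} rather than merely correct up to finite index. The issue is that $\mathrm{Cox}$ is graded by the full class group and built from $H^0$ of Weil divisors, whereas the projection formula and the invariant-ring identification are transparent only for Cartier classes; so the delicate point is to bound the discrepancy between the Cartier sublattice and $\Cl$, and to check that passing to a finite-index sublattice does not destroy finite generation of the \emph{full} Cox ring. This is exactly where $\mathbb{Q}$-factoriality of $Y$ is essential, since it guarantees that every Weil class becomes Cartier after a bounded multiple. A second, related obstacle is that the Stein factor $Z$ need not be $\mathbb{Q}$-factorial, so one must either verify that the argument is insensitive to this or replace $Z$ by a small $\mathbb{Q}$-factorial modification and argue that such modifications preserve the Mori dream property; establishing the uniform module-finiteness of $\mathrm{Cox}(Z)$ over $\mathrm{Cox}(Y)$ across the whole grading is the place I expect most of the technical effort to concentrate.
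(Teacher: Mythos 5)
The paper contains no proof of this statement: it is Okawa's theorem, quoted with the citation \cite{Okawa}, so your attempt can only be measured against Okawa's published argument, which proceeds through the Hu--Keel geometric characterization of Mori dream spaces (the embedding $f^*\colon N^1(Y)\hookrightarrow N^1(X)$, descent of semiampleness along surjective morphisms, section rings of individual divisors) rather than by exhibiting a finitely generated multigraded Cox ring of $Y$. Your route---Stein factorization, a Veronese-type invariant subring for the connected-fibre part, Artin--Tate for the finite part---is therefore genuinely different, and its first half is correct: the part of $\mathrm{Cox}(X)$ graded by a subgroup $\Lambda\subset\Gamma_X$ is the invariant ring of the diagonalizable group $\Hom(\Gamma_X/\Lambda,\mathbb{C}^*)$, hence finitely generated in characteristic zero. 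The genuine gap sits exactly where you park ``most of the technical effort'': module-finiteness of $\mathrm{Cox}(Z)$ over $\mathrm{Cox}(Y)$. The justification you give---that $H^0(Z,\mathcal{O}_Z(g^*D))$ is a finite $H^0(Y,\mathcal{O}_Y(D))$-module because $g_*\mathcal{O}_Z$ is coherent---is not well formed ($H^0(Y,\mathcal{O}_Y(D))$ is not a ring), and coherence only controls each graded piece, which is automatic and beside the point; the difficulty is uniform generation across infinitely many degrees. The missing idea is integrality of \emph{uniformly bounded} degree, coming from the algebra structure of $g_*\mathcal{O}_Z$: for $D$ Cartier on $Y$ and $s\in H^0(Z,\mathcal{O}_Z(g^*D))$, multiplication by $s$ is a twisted endomorphism $g_*\mathcal{O}_Z\to (g_*\mathcal{O}_Z)\otimes\mathcal{O}_Y(D)$ over the flat locus of $g$ (a big open set, as $g$ is finite and $Y,Z$ are normal), and Cayley--Hamilton gives $s^d-c_1s^{d-1}+\cdots\pm c_d=0$ with $c_i\in H^0(Y,\mathcal{O}_Y(iD))$, $d=\deg g$, the $c_i$ extending across the codimension-two bad locus by normality. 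Then every homogeneous element of $R:=\bigoplus_{D\in\Gamma_Y}H^0(X,\mathcal{O}_X(f^*D))$ is integral of degree $d$ over $f^*\mathrm{Cox}(Y)$, and since $R$ is a finitely generated algebra generated by homogeneous elements (your invariant-theory step), $R$ is a finite $\mathrm{Cox}(Y)$-module; only now does Artin--Tate have an engine to run on.

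A second problem is the intermediate claim that the Stein factor $Z$ ``is a Mori dream space'': if $Z$ is not $\mathbb{Q}$-factorial the notion does not apply, and your fallback of replacing $Z$ by a small $\mathbb{Q}$-factorial modification is circular, since the existence of such a modification is itself an assertion of Mori-dream type about $Z$. This obstacle is an artifact of your organization and should simply be removed: make no claim about $Z$ as a variety. Choose $\Gamma_Y\subset \mathrm{Pic}(Y)$ free with $\Gamma_Y\otimes\mathbb{Q}\cong \Cl(Y)\otimes\mathbb{Q}$ (possible because $Y$ is $\mathbb{Q}$-factorial), rescale so that $f^*\Gamma_Y$ lies in the grading lattice $\Gamma_X$, run the invariant-theory argument directly on the $f^*\Gamma_Y$-graded subring $R$ of $\mathrm{Cox}(X)$, and let $Z$ enter only through the projection formula $H^0(X,\mathcal{O}_X(f^*D))=H^0(Z,\mathcal{O}_Z(g^*D))$ and the Cayley--Hamilton argument above. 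With these two repairs (and a more careful treatment than your first paragraph gives of the finite generation of the relevant class/Picard groups of $Y$, which your parenthetical remark asserts rather than proves), your plan does yield a complete proof, one that is more algebraic than, and independent of, Okawa's.
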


\subsection{Mori dream surfaces}

There are simple criterions for a surface to be a Mori dream space. Artebani, Hausen and Laface proved the following theorem in \cite{AHL}.

\begin{theorem}\cite[Theorem 2.5]{AHL}
Let $X$ be a normal complete surface with finitely generated $\Cl(X).$ Then the followings are equivalent. \\
(1) $\Cox(X)$ is finitely generated. \\
(2) The effective cone $\Eff(X) \subset \Cl(X)_{\mathbb{R}}$ and moving cone $\Mov(X) \subset \Cl(X)_{\mathbb{R}}$ are rational polyhedral cones and $\Mov(X)=\SAmp(X).$
\end{theorem}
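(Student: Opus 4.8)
The plan is to prove the two implications separately. The key geometric inputs are the Zariski decomposition on surfaces and the fact that a normal complete (hence projective) surface admits no nontrivial small modifications, so that any birational map which is an isomorphism in codimension one is automatically a genuine isomorphism.

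\textbf{The implication $(1) \Rightarrow (2)$.} First I would observe that the $Cl(X)$-grading of $Cox(X) = \bigoplus_D H^0(X,\mathcal{O}_X(D))$ has a nonzero graded piece in degree $D$ exactly when $D$ is effective. Hence if $Cox(X)$ is generated by finitely many homogeneous elements of degrees $w_1,\dots,w_n$, every effective class is a non-negative integral combination of the $w_i$, so that $Eff(X)=\sum_i \bbR_{\geq 0}\,w_i$ is rational polyhedral. For the assertion about $Mov(X)$ and $SAmp(X)$ I would invoke the GIT/chamber picture attached to a finitely generated Cox ring \cite{HK}: the moving cone decomposes into finitely many rational polyhedral chambers, each being the pullback of the ample cone of a small modification of $X$. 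Since no nontrivial such modification exists on a surface, this decomposition is trivial, every movable class is already semiample on $X$, and so $Mov(X)=SAmp(X)$ is rational polyhedral.

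\textbf{The implication $(2) \Rightarrow (1)$,} which I expect to be the substantive direction, I would handle by assembling a finite generating set of $Cox(X)$ in two packages. Because $Eff(X)$ is rational polyhedral, \cite[Proposition 1.1]{AL} shows its extremal rays are spanned by the classes of finitely many negative curves $C_1,\dots,C_r$; let $f_i\in H^0(X,\mathcal{O}_X(C_i))$ be the canonical section cutting out $C_i$. Next I would use the Zariski decomposition: every effective $D$ can be written $D=P+N$ with $P$ movable and $N=\sum_i n_i C_i$ supported on the negative curves, and multiplication by the monomial $\prod_i f_i^{n_i}$ identifies $H^0(X,\mathcal{O}_X(\lfloor P\rfloor))$ with $H^0(X,\mathcal{O}_X(D))$, since every section of $D$ vanishes along the fixed part $N$ to the prescribed order. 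This reduces finite generation of $Cox(X)$ to finite generation of the subring supported on the movable cone, namely $\bigoplus_{P\in Mov(X)\cap Cl(X)} H^0(X,\mathcal{O}_X(P))$, together with the finitely many $f_i$. By hypothesis $Mov(X)=SAmp(X)$ is rational polyhedral; choosing semiample generators $L_1,\dots,L_s$ of its extremal rays, the triviality of small modifications forces all $L_j$ to be pullbacks of ample classes $A_j$ from a single semiample model $\phi\colon X\to Y$, so the multisection ring $\bigoplus_{a\in\bbN^s} H^0(X, L_1^{a_1}\cdots L_s^{a_s})\cong \bigoplus_a H^0(Y, A_1^{a_1}\cdots A_s^{a_s})$ is the multihomogeneous coordinate ring of $Y$ with respect to ample bundles, hence finitely generated by Serre vanishing. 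Combining the two packages yields a finite generating set for $Cox(X)$.

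The main obstacle, I expect, is the integral bookkeeping inside the Zariski decomposition in the second implication: the decomposition is a priori only over $\bbQ$, so I must control the round-downs and verify that the negative part contributes exactly the monomial $\prod_i f_i^{n_i}$ and that the identification $H^0(X,\mathcal{O}_X(D)) = \prod_i f_i^{n_i}\cdot H^0(X,\mathcal{O}_X(\lfloor P\rfloor))$ holds compatibly with the multiplication in $Cox(X)$. I would address this by using that the positive part is genuinely movable precisely because Zariski decomposition strips off every divisorial fixed component, together with the fixed-order vanishing of sections along $N$. Additional care is needed because $X$ is only assumed normal rather than smooth or $\bbQ$-factorial, so I would either pass to a resolution and descend the argument, or work throughout with the version of Zariski decomposition for $\bbQ$-divisors on normal surfaces.
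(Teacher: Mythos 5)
The paper does not actually prove this statement --- it is quoted from \cite[Theorem 2.5]{AHL} --- so your argument can only be assessed on its own merits, and while its overall skeleton (degrees of generators for $Eff(X)$; chambers plus absence of small modifications; reduction to a semiample multisection ring plus finitely many negative-curve sections) is the right one, the direction $(2)\Rightarrow(1)$ contains two concretely false steps. The central gap is your use of Zariski decomposition: the positive part $P$ of an effective divisor is \emph{nef}, not movable, and your justification that ``Zariski decomposition strips off every divisorial fixed component'' asserts precisely what Zariski decomposition does not do. It strips the negative-definite part of the base locus, but curves $C$ with $P\cdot C=0$ can remain in the base locus of every $|kP|$: in Zariski's classical example of a nef and big divisor that is not semiample one has $N=0$, so $D=P$ there, and $P$ is not movable. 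Under hypothesis $(2)$ you only know $Mov(X)=SAmp(X)$, so you never get from ``$P$ is nef'' to ``$P$ is movable,'' and that bridge is essentially the content of the theorem; it is exactly the difference between the present statement and the $\mathbb{Q}$-factorial case \cite[Corollary 2.6]{AHL}, whose hypothesis is $Nef(X)=SAmp(X)$. The decomposition that genuinely removes all divisorial fixed components is the fixed-part decomposition $|D|=F+|M|$ of the complete linear system (then $M$ is movable, since $Bs|M|$ is finite), but then $F$ need not be supported on your extremal negative curves $C_i$: rigid irreducible curves of self-intersection $0$, e.g. half-fibers of elliptic fibrations on Enriques surfaces with finite automorphism group (which are Mori dream surfaces by \cite{AHL}), occur as fixed parts yet span no extremal ray of $Eff(X)$. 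So neither decomposition gives both properties you need at once, and the ``two packages'' generation claim is unproven.

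The second false step is the claim that triviality of small modifications forces the extremal semiample classes $L_1,\dots,L_s$ to be pullbacks of ample classes from a single model $\phi\colon X\to Y$. Already for $\mathbb{P}^1\times\mathbb{P}^1$ --- and equally for the fake quadrics in this very paper, the surfaces isogenous to a higher product, whose extremal semiample classes are the two fiber classes --- the two extremal rays of $SAmp(X)$ are pulled back from two \emph{different} fibrations over curves and cannot both be $\phi^*(\textup{ample})$ for one $\phi$: if $\dim Y=2$ the pullbacks would be big, while if $\dim Y\leq 1$ the two pullbacks would satisfy $L_1\cdot L_2=0$, contradicting $L_1\cdot L_2>0$. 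The fact you want, finite generation of the multisection ring of finitely many semiample divisors, is true and standard, but it is proved by a Veronese/module-finiteness argument, not via a common model with ample classes. Finally, two scope issues: ``normal complete (hence projective)'' is false, since non-projective normal complete surfaces exist and the theorem covers them; and the chamber structure of the moving cone in \cite{HK}, which you invoke for $(1)\Rightarrow(2)$, is established there only for $\mathbb{Q}$-factorial projective varieties, so in the stated generality one needs the framework of \cite{ADHL}; likewise \cite[Proposition 1.1]{AL} requires $\rho\geq 3$ and smoothness, so small Picard rank and non-$\mathbb{Q}$-factorial cases need separate treatment.
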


As a corollary we have the following helpful criterion of finitely generation of Cox rings of $\mathbb{Q}$-factorial surfaces.

\begin{corollary}\cite[Corollary 2.6]{AHL}
Let $X$ be a $\mathbb{Q}$-factorial projective surface with finitely generated $\Cl(X).$ Then the followings are equivalent. \\
(1) $\Cox(X)$ is finitely generated. \\
(2) The effective cone $\Eff(X) \subset \Cl(X)_{\mathbb{R}}$ is a rational polyhedral cone and $\Nef(X)=\SAmp(X).$
\end{corollary}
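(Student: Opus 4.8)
The plan is to deduce the corollary from the preceding theorem \cite[Theorem 2.5]{AHL}, the only difference between the two statements being that the corollary trades the conditions ``$Eff(X)$ and $Mov(X)$ rational polyhedral with $Mov(X)=SAmp(X)$'' for ``$Eff(X)$ rational polyhedral with $Nef(X)=SAmp(X)$.'' So it suffices to prove that, on a $\mathbb{Q}$-factorial projective surface, these two packages of hypotheses are equivalent. Throughout I would use two standard inputs: the inclusion chain $SAmp(X)\subseteq Mov(X)\subseteq Nef(X)\subseteq\overline{Eff(X)}$ from the proposition of \cite{AL} above, and the duality $Nef(X)=\overline{Eff(X)}^{\vee}$ for the intersection pairing. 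The genuinely surface-specific ingredient, and the heart of the matter, is the identity $\overline{Mov(X)}=Nef(X)$.

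First I would establish $\overline{Mov(X)}=Nef(X)$. The inclusion $Mov(X)\subseteq Nef(X)$ is part of the chain above, and since $Nef(X)$ is closed this already gives $\overline{Mov(X)}\subseteq Nef(X)$. For the reverse inclusion I would use ample approximation: fixing an ample class $A$, for any nef class $D$ and any rational $\varepsilon>0$ the class $D+\varepsilon A$ is ample, hence semiample, hence movable; letting $\varepsilon\to 0$ places $D$ in $\overline{Mov(X)}$. Thus $Nef(X)\subseteq\overline{Mov(X)}$ and the two cones share the same closure. In particular, whenever $Mov(X)$ is closed one has the honest equality $Mov(X)=Nef(X)$.

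For the implication $(2)\Rightarrow(1)$, assume $Eff(X)$ is rational polyhedral and $Nef(X)=SAmp(X)$. Being rational polyhedral, $Eff(X)$ is closed, so $\overline{Eff(X)}=Eff(X)$ and, dualizing, $Nef(X)=Eff(X)^{\vee}$ is rational polyhedral as well. The squeeze $SAmp(X)\subseteq Mov(X)\subseteq Nef(X)=SAmp(X)$ then forces $Mov(X)=Nef(X)=SAmp(X)$, so $Mov(X)$ is rational polyhedral and equals $SAmp(X)$. Both conditions of (2) in \cite[Theorem 2.5]{AHL} now hold, and that theorem yields that $Cox(X)$ is finitely generated, which is (1). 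Conversely, for $(1)\Rightarrow(2)$, that theorem gives that $Eff(X)$ and $Mov(X)$ are rational polyhedral with $Mov(X)=SAmp(X)$; the first assertion is exactly the first half of (2), and since $Mov(X)$ is rational polyhedral it is closed, whence $Nef(X)=\overline{Mov(X)}=Mov(X)=SAmp(X)$, giving the remaining equality.

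The step I expect to demand the most care is the identity $\overline{Mov(X)}=Nef(X)$ together with the attendant bookkeeping of closures: one must read ``rational polyhedral'' as entailing closedness, and one must check that ample classes are semiample (hence movable) and that nef--effective duality and the inclusion chain remain valid in the possibly singular $\mathbb{Q}$-factorial setting, rather than only for smooth surfaces with $q=0$. Granting these essentially standard facts, the remainder of the argument is a formal squeeze between the cones.
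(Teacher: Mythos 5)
Your derivation is correct and is essentially the intended one: the paper itself offers no proof of this statement (it is quoted from \cite[Corollary 2.6]{AHL}), and both there and in your write-up the corollary is obtained from \cite[Theorem 2.5]{AHL} by showing the two packages of cone conditions agree, with the surface identity $\overline{Mov(X)}=Nef(X)$ (nef classes approximated by ample, hence semiample, hence movable ones) carrying the real content. Your closure bookkeeping --- rational polyhedral implies closed, the squeeze $SAmp(X)\subseteq Mov(X)\subseteq Nef(X)$, and nef--effective duality --- is sound in the $\mathbb{Q}$-factorial projective setting, so there is nothing to correct.
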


There are many examples of surfaces with $\kappa \leq 0$ which are Mori dream spaces. For $\kappa = -\infty,$ it is well-known that log del Pezzo surfaces are Mori dream surfaces.

\begin{theorem}\cite[Corollary 1.3.2]{BCHM}
Let $X$ be a log Fano variety. Then $X$ is a Mori dream space. In particular, log del Pezzo surfaces are Mori dream surfaces.
\end{theorem}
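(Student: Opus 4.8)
The plan is to verify the Hu--Keel characterization of a Mori dream space directly, taking as a black box the minimal model program for klt pairs with big boundary established in \cite{BCHM}. Write the hypothesis in the form: $X$ is a normal $\mathbb{Q}$-factorial projective variety and there is an effective $\mathbb{Q}$-divisor $\Delta$ with $(X,\Delta)$ klt and $-(K_X+\Delta)$ ample. The first, purely formal, task is to record the two structural conditions in the definition of a Mori dream space that come for free from positivity. Since $-(K_X+\Delta)$ is ample and $(X,\Delta)$ is klt, Kawamata--Viehweg vanishing gives $h^i(X,\calO_X)=0$ for $i>0$; in particular $h^1(X,\calO_X)=0$, which together with the fact that klt singularities are rational yields $Pic(X)_{\mathbb{Q}}=N^1(X)_{\mathbb{Q}}$, and $X$ is $\mathbb{Q}$-factorial by assumption.

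Next I would describe the nef cone of $X$ itself. Because $-(K_X+\Delta)$ is ample, every nonzero class of $\overline{NE}(X)$ is $(K_X+\Delta)$-negative, so the Cone Theorem for the klt pair $(X,\Delta)$ forces $\overline{NE}(X)$ to be rational polyhedral; dualizing, $Nef(X)$ is a rational polyhedral cone. Moreover, for any nef divisor $L$ the class $L-(K_X+\Delta)=L+\bigl(-(K_X+\Delta)\bigr)$ is nef and big, so the Base-Point-Free Theorem shows $L$ is semiample, i.e. $Nef(X)=SAmp(X)$. The only remaining point is then the global chamber structure of the movable cone.

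The heart of the argument is to produce finitely many small $\mathbb{Q}$-factorial modifications whose nef cones subdivide $\overline{Mov(X)}$. Here the ampleness of $-(K_X+\Delta)$ is decisive: it provides enough room that, after scaling and adding a general effective $\mathbb{Q}$-divisor in the ample class $-(K_X+\Delta)$, any target class $D$ can be realized as an adjoint class $D\sim_{\mathbb{Q}}K_X+\Delta_D$ of a klt pair with big effective boundary $\Delta_D$. Running the $(K_X+\Delta_D)$-MMP, \cite{BCHM} guarantees the existence of the required flips and contractions and termination of the program with scaling, producing a good minimal model for $D$; restricting to movable $D$, these birational maps are isomorphisms in codimension one. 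Letting $\Delta_D$ range over the rational polytope of boundaries arising in this way, the finiteness-of-minimal-models theorem of \cite{BCHM} yields only finitely many resulting modifications $X_1,\dots,X_m$. Their pulled-back nef cones give a finite rational polyhedral decomposition $\overline{Mov(X)}=\bigcup_i\phi_i^*\,Nef(X_i)$, where each chamber, being $Nef(X_i)$, is rational polyhedral by this finiteness, while semiampleness on each $X_i$ follows from the Base-Point-Free Theorem exactly as in the previous paragraph (each $X_i$ remains weak log Fano, so $-(K_{X_i}+\Delta_i)$ stays nef and big). Assembling these three paragraphs, the Hu--Keel criterion is met and $X$ is a Mori dream space.

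I expect the genuine obstacle to be concentrated entirely in the two invocations of \cite{BCHM}: termination of the MMP with scaling and finiteness of the set of minimal models as the boundary moves in a polytope. The log Fano hypothesis is precisely what reduces the problem to the regime those theorems govern, since it forces the boundary $\Delta_D$ to be big and removes any pseudoeffectivity obstruction; once those deep inputs are granted, the cone-theoretic and combinatorial assembly is formal. For the concluding del Pezzo statement one specializes to $\dim X=2$, where one may instead bypass \cite{BCHM} altogether and invoke the surface criterion of Artebani, Hausen and Laface recalled above: log del Pezzo surfaces have rational polyhedral $\overline{Eff(X)}$ and $Nef(X)=SAmp(X)$ by the Cone and Base-Point-Free Theorems alone.
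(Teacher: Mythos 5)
The paper does not actually prove this statement: it is quoted directly as Corollary 1.3.2 of \cite{BCHM} and used as a black box, so there is no internal proof to compare against. What you have written is, in outline, a reconstruction of the argument in \cite{BCHM} itself, and it is correct as a sketch. You rightly separate the formal part --- Kawamata--Viehweg vanishing giving $h^1(\mathcal{O}_X)=0$, the Cone Theorem making $\overline{NE}(X)$ (hence dually $Nef(X)$) rational polyhedral because every nonzero class of $\overline{NE}(X)$ is $(K_X+\Delta)$-negative, and the Base-Point-Free Theorem giving $Nef(X)=SAmp(X)$ since $L-(K_X+\Delta)$ is ample whenever $L$ is nef --- from the genuinely deep inputs, namely termination of the MMP with scaling and finiteness of models as the boundary varies in a polytope, which produce the finitely many small modifications decomposing the movable cone. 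One caveat: your parenthetical claim that each $X_i$ ``remains weak log Fano, so $-(K_{X_i}+\Delta_i)$ stays nef and big'' is not automatic, because nefness is not preserved under small $\mathbb{Q}$-factorial modifications. The standard repair is to replace the log Fano condition by the SQM-invariant Fano-type condition: choose a general effective $A\sim_{\mathbb{Q}}-(K_X+\Delta)$ so that $(X,\Delta+A)$ is klt with big boundary and $K_X+\Delta+A\sim_{\mathbb{Q}}0$; this pair transforms crepantly to every $X_i$, and the base-point-free argument on $X_i$ then runs using the bigness of the transformed boundary rather than nefness of an anticanonical class. Finally, your closing remark is well taken and is in fact all this paper needs: for the del Pezzo case in dimension two one can bypass \cite{BCHM} entirely, since the Cone and Base-Point-Free Theorems give a rational polyhedral $Eff(X)$ and $Nef(X)=SAmp(X)$ directly, and the criterion of \cite[Corollary 2.6]{AHL} recalled in Section 2 then yields finite generation of the Cox ring.
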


For $\kappa = 0,$ the following criterion is well-known.

\begin{theorem}\cite{AHL}
Let $X$ be a K3 surface or an Enriques surface. Then $X$ is a Mori dream surface if and only if $\Aut(X)$ is a finite group.
\end{theorem}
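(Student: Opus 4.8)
The plan is to apply the criterion of Artebani--Hausen--Laface recalled above (Corollary 2.6). A K3 or Enriques surface $X$ is smooth and projective, hence $\mathbb{Q}$-factorial, and in both cases $q(X)=h^1(X,\mathcal{O}_X)=0$; thus the first Mori-dream condition is automatic and $Cl(X)=Pic(X)=NS(X)$ is a finitely generated abelian group, so the criterion applies. It asserts that $X$ is a Mori dream surface if and only if $Eff(X)$ is a rational polyhedral cone and $Nef(X)=SAmp(X)$. The argument then splits into showing that the second equality always holds, and that polyhedrality of $Eff(X)$ is precisely the finiteness of $Aut(X)$.

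First I would dispose of the equality $Nef(X)=SAmp(X)$, which holds unconditionally on such surfaces because every nef divisor is in fact semiample. Indeed, if $D$ is nef with $D^2>0$ then $D$ is big and nef and a suitable multiple is base-point free (Saint-Donat's analysis of projective models of K3 surfaces, and its Enriques analogue); and if $D$ is nef with $D^2=0$ and $D\neq 0$, then a multiple of the underlying primitive class is the fibre class of an elliptic fibration, hence base-point free. Therefore $Nef(X)\subset SAmp(X)$, while the reverse inclusion $SAmp(X)\subset Nef(X)$ is the general inclusion recalled in the Proposition above. Hence $Nef(X)=SAmp(X)$ for every K3 or Enriques surface, and the criterion reduces to the single statement that $X$ is a Mori dream surface if and only if $Eff(X)$ is rational polyhedral.

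It remains to prove that $Eff(X)$ is rational polyhedral if and only if $Aut(X)$ is finite. The easy direction is ``$Eff(X)$ rational polyhedral $\Rightarrow$ $Aut(X)$ finite'': the group $Aut(X)$ acts on $NS(X)$ by lattice isometries preserving $Eff(X)$, hence permutes its finitely many extremal rays, and the kernel of this permutation action fixes each extremal ray. Since the extremal rays span $NS(X)_{\mathbb{R}}$ and their generators are rational, any element of the kernel scales each such generator by a rational, positive eigenvalue; being an integral automorphism with integral inverse forces all these eigenvalues to equal $1$, so the kernel is trivial and $Aut(X)$ embeds in a finite symmetric group. For the converse I would invoke the cone conjecture for these surfaces --- Sterk's theorem for K3 surfaces and its Enriques counterpart --- which yields a rational polyhedral fundamental domain $\Pi$ for the action of $Aut(X)$ on the effective nef cone. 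When $Aut(X)$ is finite, $Nef(X)=\bigcup_{g\in Aut(X)} g\cdot\Pi$ is a finite union of rational polyhedral cones, hence rational polyhedral; dualizing with respect to the intersection form, under which $\overline{Eff(X)}$ is the dual cone of $Nef(X)$ on a surface, shows that $\overline{Eff(X)}$, and therefore $Eff(X)$, is rational polyhedral.

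The main obstacle is exactly this converse implication. Finiteness of $Aut(X)$ must be converted into polyhedrality of an a priori ``round'' cone: the positive cone of $X$ has a circular boundary, and only the presence of enough $(-2)$-curves --- whose classes furnish the extremal rays of $Eff(X)$ lying on that boundary --- can cut it down to a polyhedron. Controlling these potentially infinitely many curves is precisely the content of the cone conjecture: $Aut(X)$ acts with finitely many orbits on the $(-2)$-curves and on the rational boundary rays, so finiteness of $Aut(X)$ forces finitely many such curves and a genuinely polyhedral effective cone. This deep input is the crux of the theorem; once it is granted, the remaining steps are formal.
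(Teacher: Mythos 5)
The paper does not actually prove this statement---it is quoted as a known result from \cite{AHL}, so there is no internal proof to compare against. Your sketch reconstructs, in outline, exactly the argument of that cited reference: reduce by the finite-generation criterion (Corollary 2.6 here) to ``$Eff(X)$ rational polyhedral and $Nef(X)=SAmp(X)$,'' establish $Nef(X)=SAmp(X)$ unconditionally from semiampleness of nef divisors on K3 and Enriques surfaces, and convert finiteness of $Aut(X)$ into polyhedrality of $Eff(X)$ via the cone theorem (Sterk for K3 surfaces, its Enriques analogue due to Namikawa), together with the easy converse via the action on the finitely many extremal rays; this is correct.
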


Let us recall following remarks(cf. \cite{Fulton1}).

\begin{remark}
(1) Let $f : X \to Y$ be a finite morphism between two smooth surface, then $K_X=f^*(K_Y)+R$ where $R$ is the ramification divisor. \\
(2) Let $f : X \to Y$ be a finite flat morphism of degree $d.$ Then $$ A_*Y \to A_*X \to A_*Y $$ is multiplication of degree $d.$ \\
(3) Suppose that a finite group $G$ acts on $X$ and the quotient variety is $Y.$ Then there is a canonical isomorphism
$$ (A_*Y)_{\mathbb{R}} \cong (A_*X)^G_{\mathbb{R}} $$
and the natural map
$$ (A_*Y)_{\mathbb{R}} \to (A_*X)^G_{\mathbb{R}} \to (A_*X) _{\mathbb{R}} \to (A_*Y) _{\mathbb{R}} $$
is multiplication of degree $|G|.$
\end{remark}

From the above results we get the following proposition.

\begin{proposition}\label{criterion}
Suppose that $\pi : X \to Y$ is a finite flat morphism of degree $d$ between normal $\mathbb{Q}$-factorial projective surfaces with $h^1(X,\mathcal{O}_X)=0$ and $\pi^* : \Pic(Y) _{\mathbb{R}} \cong \Pic(X) _{\mathbb{R}}$ is an isomorphism whose inverse is $\frac{1}{d} \pi_* : \Pic(X) _{\mathbb{R}} \cong \Pic(Y) _{\mathbb{R}}.$ Then we have the followings. \\
(1) $X$ is a Mori dream surface if and only if $Y$ is also a Mori dream surface. \\
(2) The effective, nef and semiample cones of $X$ are pull-backs of those of $Y.$ \\
(3) When $\Eff(X)$(or $\Eff(Y)$) is a rational polyhedral cone, every negative curve on $X$ is a pullback of a negative curve on $Y.$ Moreover the pull-back of a negative curve on $Y$ does not split.
\end{proposition}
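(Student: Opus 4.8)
The plan is to exploit the hypothesis that $\pi^* : Pic(Y)_{\mathbb{R}} \to Pic(X)_{\mathbb{R}}$ is an isomorphism with inverse $\tfrac{1}{d}\pi_*$, together with the projection-type formulas, to transport all the cone-theoretic data between $X$ and $Y$. The key structural fact I would establish first is that $\pi^*$ and $\tfrac{1}{d}\pi_*$ are mutually inverse maps that both respect effectivity, nefness and semiampleness. For effectivity: $\pi^*$ of an effective class is effective since the pullback of an effective divisor by a finite morphism is effective, and conversely $\pi_*$ of an effective class is effective because $\pi$ is finite (so pushforward of an effective cycle is an effective cycle). For nefness: since $\pi$ is finite and surjective, a divisor $D$ on $Y$ is nef if and only if $\pi^* D$ is nef (nefness can be tested against curves, and every curve on $X$ maps to a curve on $Y$, using the flat degree-$d$ relation from Remark (2) to control intersection numbers). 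For semiampleness: $D$ semiample on $Y$ clearly gives $\pi^* D$ semiample, and conversely if $\pi^* D$ is semiample then some multiple is base-point free, and pushing forward / using $\tfrac{1}{d}\pi_*$ shows a multiple of $D$ is base-point free on $Y$. These equivalences give part (2): the effective, nef and semiample cones of $X$ are exactly the $\pi^*$-images of those of $Y$, and conversely the $\tfrac{1}{d}\pi_*$-images recover the cones on $Y$.

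For part (1), I would combine part (2) with the Artebani--Hausen--Laface criterion (the corollary cited above). Since $\pi^*$ is a linear isomorphism of the real Picard spaces carrying $Eff(Y)$ onto $Eff(X)$, one cone is rational polyhedral if and only if the other is; and since $\pi^*$ carries $Nef(Y)$ onto $Nef(X)$ and $SAmp(Y)$ onto $SAmp(X)$, the equality $Nef(Y)=SAmp(Y)$ holds if and only if $Nef(X)=SAmp(X)$. The hypotheses $h^1(X,\mathcal{O}_X)=0$ and $\mathbb{Q}$-factoriality are in place (and transfer to $Y$ via Okawa's theorem and the isomorphism on Picard groups), so the corollary applies to both surfaces simultaneously, yielding that $X$ is a Mori dream surface if and only if $Y$ is.

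For part (3), assume $Eff(X)$ is rational polyhedral (equivalently $Eff(Y)$ is, by part (2)). By the Artebani--Laface proposition, the extremal rays of $Eff(X)$ are spanned by the classes of negative curves, and likewise on $Y$. Given a negative curve $C$ on $Y$, its pullback $\pi^* C$ is an effective class; I would argue it is the class of a single irreducible curve (it does not split) by a degree and intersection-number count: if $\pi^* C$ decomposed as a sum of two effective classes, pushing forward by $\tfrac{1}{d}\pi_*$ and using that $\tfrac{1}{d}\pi_* \pi^* = \mathrm{id}$ would exhibit a nontrivial decomposition of $[C]$ in $Eff(Y)$, contradicting that $[C]$ spans an extremal ray. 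The self-intersection relation $(\pi^* C)^2 = d\,C^2 < 0$ (from Remark (2)) confirms the pullback is again a negative curve. Conversely, every negative curve on $X$ spans an extremal ray of $Eff(X)$, hence corresponds under $\pi^*$ to an extremal ray of $Eff(Y)$, i.e. to a negative curve on $Y$, so every negative curve on $X$ arises as such a pullback.

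The main obstacle I anticipate is the non-splitting claim and the precise matching of extremal rays in part (3): one must ensure that the $G$-equivariance / degree bookkeeping in Remark (3) is applied carefully so that irreducibility is genuinely preserved, rather than merely numerical positivity. In particular, ruling out the possibility that $\pi^* C$ is a nonreduced or reducible cycle with the same class requires using that $[C]$ is extremal together with the fact that $\tfrac{1}{d}\pi_*$ is an honest inverse sending effective classes to effective classes; this is where the flatness and the exact inverse formula $\tfrac{1}{d}\pi_* = (\pi^*)^{-1}$ do the essential work.
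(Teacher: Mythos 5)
Your parts (1) and (2) follow essentially the paper's route: transport the cones through the mutually inverse maps $\pi^*$ and $\tfrac{1}{d}\pi_*$, then apply the Artebani--Hausen--Laface surface criterion. The only real difference is organizational: the paper gets ``$X$ Mori dream $\Rightarrow Y$ Mori dream'' from Okawa's theorem and proves the converse directly (a nef class on $X$ is $\pi^*C$ with $C$ nef, hence semiample on $Y$, and $\pi(\mathbb{B}|\pi^*C|)\subset \mathbb{B}|C|$ kills the stable base locus), whereas you run both directions symmetrically through the criterion, which also forces you to prove descent of semiampleness along $\pi$; your sketch of that descent (norm/pushforward of a divisor avoiding a fiber) is a standard argument and is fine, just not spelled out.

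Part (3) contains a genuine gap. Your contradiction mechanism is: if $\pi^*C$ decomposed as a sum of two effective classes, pushing forward would ``exhibit a nontrivial decomposition of $[C]$ in $Eff(Y)$, contradicting that $[C]$ spans an extremal ray.'' But that is not a contradiction: an extremal class always admits decompositions into effective classes --- extremality only forces the summands to lie on the same ray (e.g.\ $[C]=\tfrac12[C]+\tfrac12[C]$). Worse, in the paper's own applications such decompositions of $\pi^*C$ really occur: when $C$ is a component of the branch divisor of a bidouble cover one has $\pi^*C=2\widetilde{C}=\widetilde{C}+\widetilde{C}$. So an argument that derives a contradiction from the mere existence of a decomposition proves too much; what has to be excluded is only that the \emph{support} of $\pi^*C$ has two distinct irreducible components.

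The missing step is intersection-theoretic, and it is exactly what the paper supplies. Since $(\pi^*C)^2=d\,C^2<0$, some irreducible component $D_1$ of $\pi^*C$ has $D_1^2<0$. If $D_2$ were a different irreducible component, then $D_1\cdot D_2\geq 0$, and together with $D_1^2<0$ this forces $D_1$ and $D_2$ to be linearly independent in $Pic(X)_{\mathbb{R}}$ (proportionality $D_2\equiv\lambda D_1$, $\lambda>0$, would give $D_1\cdot D_2=\lambda D_1^2<0$). On the other hand, because $\pi$ is finite, each $D_i$ dominates $C$, so $\pi_*D_1$ and $\pi_*D_2$ are both positive multiples of $C$, i.e.\ they lie on a single line in $Pic(Y)_{\mathbb{R}}$. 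This contradicts the fact that $\pi_*$ is (up to the factor $d$) the inverse of the isomorphism $\pi^*$, hence injective. Your proposal gestures at ``a degree and intersection-number count'' and correctly identifies where the difficulty lies, but never supplies this step, and the mechanism you do state would fail; as written, the non-splitting claim is unproved.
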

\begin{proof}
Note that the isomorphism $\pi^* : \Pic(Y) _{\mathbb{R}} \cong \Pic(X) _{\mathbb{R}}$ send $\Eff(Y)$(resp. $\Nef(Y)$) into $\Eff(X)$(resp. $\Nef(X)$). Conversely, the isomorphism $\pi_* : \Pic(X) _{\mathbb{R}} \cong \Pic(Y) _{\mathbb{R}}$ send $\Eff(X)$ into $\Eff(Y).$ Therefore we can identify $\Eff(X)$ and $\Eff(Y)$ via the isomorphism $\pi^* : \Pic(Y) _{\mathbb{R}} \cong \Pic(X) _{\mathbb{R}}.$ \\

When $X$ is a Mori dream surface then we see that $Y$ is also a Mori dream surface from Okawa's theorem (cf. \cite{Okawa}). Now suppose that $Y$ is a Mori dream surface. Recall that we have an isomorphism $\pi^* : \Pic(Y) _{\mathbb{R}} \cong \Pic(X) _{\mathbb{R}}.$ Via this isomorphism we can identify the effective cones and nef cones of $X$ and $Y.$ Because $Y$ is a Mori dream surface, we see that the effective cone of $X$ is also a rational polyhedral cone. Let $D$ be a nef divisor on $\Pic(X) _{\mathbb{R}}.$ Because every divisor in $\Pic(X) _{\mathbb{R}}$ is a pull-back of a divisor of $\Pic(Y) _{\mathbb{R}}$ and $\pi$ is surjective, we see that $D$ is a pull-back of a nef divisor $C$ in $\Pic(X) _{\mathbb{R}}$. Because $Y$ is a Mori dream space, we see that $C$ is semiample. Suppose that $\mathbb{B}|D|$ is nonempty. Then $\pi_*\mathbb{B}|D|$ is contained in $\mathbb{B}|C|$ which gives a contradiction. Therefore we see that $X$ is a Mori dream surface. \\

Recall that a negative curve on $X$ lies on an extremal ray of $\Eff(X).$ From the above identification of $\Eff(X)$ and $\Eff(Y)$ via the isomorphism $\pi^* : \Pic(Y) _{\mathbb{R}} \cong \Pic(X) _{\mathbb{R}}$ we see that there is a negative $\mathbb{Q}$-divisor on $Y$ such that its pull-back is the negative curve on $X.$ Let $C'$ be a negative curve on $Y.$ Suppose that $\pi^*(C')$ is not irreducible. Then there is an irreducible component $D_1$ of $\pi^*(C')$ such that $D_1^2<0.$ Let $D_2$ be another irreducible component of $\pi^*(C').$ Because $D_1^2<0, D_1 \cdot D_2 \geq 0$ we see that $D_1, D_2$ are linearly independent vectors in $\Pic(X)_{\mathbb{R}}$ which goes to the same element $C'$ in $\Pic(X)_{\mathbb{R}}.$ This gives a contradiction to the fact that $\pi^* : \Pic(Y) _{\mathbb{R}} \cong \Pic(X) _{\mathbb{R}}.$ Therefore we see that the pull-back of a negative curve on $Y$ does not split.
\end{proof}

\begin{remark}
We found that Okawa obtained more general result than the first part of the above proposition via different method in \cite{Okawa}.
\end{remark}

\section{Minimal surfaces of general type with $p_g=0, 7 \leq K^2 \leq 9.$}

In this section, we discuss Mori dream surfaces of general type with $p_g=0, 7 \leq K^2 \leq 9.$ Let $X$ be a surface of general type with $p_g=0, 7 \leq K^2 \leq 9.$ Because the Picard rank of $X$ is small it is relatively easier to check whether $X$ is a Mori dream space or not.

\subsection{Fake projective planes}

Minimal surfaces of general type with $p_g=0, K^2=9$ are called fake projective planes. Fake projective planes are classifies by works of Prasad and Yeung and Cartright and Steger.

\begin{theorem}\cite{CS,PY, PY_addendum}
There are exactly 100 fake projective planes.
\end{theorem}

From Noether's theorem we see that the Picard rank of any fake projective plane is 1.

\begin{proposition}
Let $X$ be a normal $\mathbb{Q}$-factorial projective variety with $h^1(X,\mathcal{O}_X)=0$ and Picard number 1. Then $X$ is a Mori dream space.
\end{proposition}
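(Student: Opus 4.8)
The plan is to verify the two defining conditions of a Mori dream space directly, using the simplicity of the Picard group. Since $X$ is $\mathbb{Q}$-factorial and we are given $h^1(X,\mathcal{O}_X)=0$, condition (1) in the definition of Mori dream space holds immediately, so the entire content of the proof is to analyze the cone structure coming from Picard number $1$.

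First I would observe that because $\rho(X)=1$, the vector space $\mathrm{Pic}(X)_{\mathbb{R}}$ is one-dimensional, so $\overline{Eff(X)}$ is a single ray (the closure of the half-line spanned by an ample class, since $X$ is projective and hence carries an effective divisor). A one-dimensional cone is automatically rational polyhedral, so the effective cone is trivially rational polyhedral. The same reasoning applies to $Nef(X)$: in a one-dimensional space the nef cone is contained in the effective cone and is itself a closed half-line, so $Eff(X)$ and $Nef(X)$ coincide and are both generated by a single ample (in particular, semiample) class. This handles the hypothesis about the effective cone being rational polyhedral.

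Next I would invoke Corollary 2.6 of Artebani--Hausen--Laface, stated earlier in the excerpt, which says that for a $\mathbb{Q}$-factorial projective surface with finitely generated $Cl(X)$, finite generation of $Cox(X)$ is equivalent to $Eff(X)$ being rational polyhedral together with $Nef(X)=SAmp(X)$. The only remaining point is therefore $Nef(X)=SAmp(X)$, i.e.\ that the generator of the nef ray is semiample. Since the generating class is ample, and an ample divisor is semiample, the extremal ray of $Nef(X)$ is semiample, giving $Nef(X)=SAmp(X)$. By the corollary, $Cox(X)$ is finitely generated, and by the theorem of Hu--Keel (recalled above), this is equivalent to $X$ being a Mori dream space.

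I expect the proof to be essentially formal, with no serious obstacle: the statement is really a packaging of the observation that in Picard number $1$ all the relevant cones degenerate to a single ample ray, so polyhedrality and the coincidence of nef and semiample cones are automatic. The one point meriting a sentence of care is the identification of an ample generator—one must note that a projective variety of Picard number $1$ has its (pseudo)effective and ample cones spanned by the same ample ray, so that the generator is genuinely ample rather than merely nef on the boundary; but this is standard and poses no real difficulty.
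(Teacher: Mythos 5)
Your cone analysis in Picard number $1$ is fine, but the proof has a genuine gap: the proposition is stated for a normal $\mathbb{Q}$-factorial projective \emph{variety} of arbitrary dimension, while the key criterion you invoke --- Corollary 2.6 of Artebani--Hausen--Laface --- is a statement about $\mathbb{Q}$-factorial projective \emph{surfaces}. Nothing in your argument covers $\dim X \geq 3$, and this generality is not idle: the paper later applies exactly this reasoning to complete intersections of dimension $\geq 2$ with $\rho=1$ in the Discussions section. In higher dimension the implication ``$Eff(X)$ rational polyhedral and $Nef(X)=SAmp(X)$ implies $Cox(X)$ finitely generated'' is simply not available as a black box; one would instead have to verify Hu--Keel's definition directly (for $\rho=1$ condition (3) can be handled with the identity map, because any divisor whose numerical class is a positive multiple of an ample class is itself ample by the numerical nature of ampleness, hence semiample), or argue on the Cox ring itself. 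A secondary point: even in the surface case, the AHL corollary has the hypothesis that $Cl(X)$ is finitely generated, which you never discharge; it does follow from the assumptions, but it is a step your argument silently skips.

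The paper's own proof is shorter and dimension-free, and it works on the Cox ring directly: since $\rho(X)=1$ and $X$ is $\mathbb{Q}$-factorial, one may take the grading group $\Gamma=\langle D\rangle\subset Cl(X)$ with $D$ ample; the Cox ring with respect to $\Gamma$ is then the section ring $R(X,D)=\bigoplus_{n} H^0(X,\mathcal{O}_X(nD))$, which is finitely generated precisely because $D$ is ample, and Hu--Keel's theorem (finite generation of $Cox(X)$ is equivalent to being a Mori dream space, for $\mathbb{Q}$-factorial varieties with finitely generated Picard group, the latter following from $h^1(X,\mathcal{O}_X)=0$) concludes. If you want to keep your cone-theoretic viewpoint, replace the appeal to the surface criterion by a direct verification of the Hu--Keel definition as sketched above; as written, your proof only establishes the proposition for surfaces.
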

\begin{proof}
Because the Picard number of $X$ is 1, we can choose $\Gamma = \langle D \rangle \subset \Cl(X)$ where $D$ is an ample divisor. Therefore the Cox ring of $X$ with respect to $\Gamma$ is isomorphic to the section ring $R(X,D)$ and hence finitely generated.
\end{proof}

Therefore we have the following conclusion.

\begin{corollary}
Every fake projective plane is a Mori dream space.
\end{corollary}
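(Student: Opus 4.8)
The final statement is the Corollary asserting that every fake projective plane is a Mori dream space.

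The plan is to derive this immediately from the two results that precede it in this subsection. First I would invoke the classification/structure fact already recorded, namely that every fake projective plane $X$ has Picard number $1$; this follows from Noether's theorem applied to a minimal surface of general type with $p_g=0$ and $K^2=9$, as noted in the excerpt. Second, I would recall that a fake projective plane is by definition a smooth minimal surface of general type, hence it is $\mathbb{Q}$-factorial (indeed smooth), and it has $q=0$, so that $h^1(X,\mathcal{O}_X)=0$.

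With these two properties in hand, the Corollary is an instance of the Proposition proved just above, which states that any normal $\mathbb{Q}$-factorial projective variety $X$ with $h^1(X,\mathcal{O}_X)=0$ and Picard number $1$ is a Mori dream space. Concretely, the mechanism is that when $\rho(X)=1$ one may choose the lattice $\Gamma=\langle D\rangle \subset Cl(X)$ generated by a single ample divisor $D$; the associated Cox ring then coincides with the section ring $R(X,D)=\bigoplus_{k\ge 0} H^0(X,\mathcal{O}_X(kD))$, which is finitely generated because $D$ is ample. By the theorem of Hu and Keel recalled earlier, finite generation of the Cox ring is equivalent to $X$ being a Mori dream space.

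I do not anticipate any genuine obstacle here: the entire content has already been established in the preceding Proposition and the Picard-rank computation. The only point requiring the slightest care is confirming that the hypotheses of the Proposition truly hold for every one of the $100$ fake projective planes uniformly, rather than case by case. This is automatic, since smoothness (giving $\mathbb{Q}$-factoriality), the vanishing $h^1(X,\mathcal{O}_X)=0$ (equivalently $q=0$, which is part of the definition of these surfaces), and $\rho(X)=1$ (via Noether's theorem) hold for all of them simultaneously and do not depend on the individual member of the classification. Thus the proof reduces to a single sentence: every fake projective plane satisfies the hypotheses of the Proposition, and is therefore a Mori dream space.
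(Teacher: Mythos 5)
Your proposal is correct and matches the paper's argument exactly: the paper likewise deduces the corollary by combining Noether's theorem (giving $\rho(X)=1$ for any fake projective plane) with the preceding proposition that a normal $\mathbb{Q}$-factorial projective variety with $h^1(X,\mathcal{O}_X)=0$ and Picard number $1$ is a Mori dream space, the proposition itself being proved by identifying the Cox ring with the section ring $R(X,D)$ of an ample generator $D$. Your additional remarks on smoothness giving $\mathbb{Q}$-factoriality and $q=0$ giving the required $h^1$-vanishing are exactly the implicit hypotheses checks in the paper.
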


\subsection{Fake quadrics}

Minimal surfaces of general type with $p_g=0, K^2=8$ are called fake quadrics. Unlike the fake projective planes, we do not know how to classify fake quadrics. We also do not know whether all fake quadrics are Mori dream space or not.

\begin{question}
Let $X$ be a fake quadric. When $X$ is a Mori dream space?
\end{question}

Typical examples of fake quadrics are surfaces isogenous to a higher product. Let us recall their definition.

\begin{definition}\cite{Catanese}\label{surface isogenous to a higher product}
A surface $X$ is isogenous to a higher product if $X$ admits a finite unramified covering which is isomorphic to a product of two curves whose genus are greater than or equal to 2.
\end{definition}

Catanese proved that when $X$ is a surface isogenous to a higher product then it belongs to one of two possible types(unmixed type and mixed type) in \cite{Catanese}.
When $X$ is a surface isogenous to a higher product, then there are two curves $C, D$ and finite group $G$ acting on them. The diagonal action of $G$ on $C \times D$ is free and $X$ is isomorphic to $(C \times D)/G.$
These surfaces were classified by Bauer, Catanese and Grunewald in \cite{BCG}. They form an important class of surfaces of general type with $p_g=0, K^2=8.$

\begin{lemma}
Let $X$ be a surface isogenous to a higher product of unmixed type with $p_g=0.$ Then $X$ is a Mori dream space. The effective cone and nef cone are generated by fibers of $X \to C/G$ and fibers of $X \to D/G.$
\end{lemma}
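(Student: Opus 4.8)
The plan is to realize $X$ as a finite flat cover of $\mathbb{P}^1 \times \mathbb{P}^1$ and then apply Proposition \ref{criterion} together with the fact that $\mathbb{P}^1 \times \mathbb{P}^1$ is a Mori dream surface, being a del Pezzo surface (so \cite[Corollary 1.3.2]{BCHM} applies).

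First I would pin down that $q(X)=0$. Writing $X=(C\times D)/G$ with the diagonal $G$-action free, one has $q(X)=g(C/G)+g(D/G)$ and $K_X^2=8\chi(\mathcal{O}_X)$. Since $X$ is of general type, $K_X^2>0$ forces $\chi(\mathcal{O}_X)=1-q(X)+p_g(X)=1-q(X)>0$, hence $q(X)=0$. Therefore $g(C/G)=g(D/G)=0$, i.e. $C/G\cong\mathbb{P}^1$ and $D/G\cong\mathbb{P}^1$, and the two projections of $C\times D$ descend to fibrations $f_1:X\to C/G=\mathbb{P}^1$ and $f_2:X\to D/G=\mathbb{P}^1$. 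Let $F_1,F_2$ denote the corresponding fiber classes. Combining the two fibrations gives a morphism $\phi=(f_1,f_2):X\to\mathbb{P}^1\times\mathbb{P}^1$ whose components pull back the two rulings to $F_1$ and $F_2$.

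Next I would verify the hypotheses of Proposition \ref{criterion} for $\phi$. The fiber of $f_1$ over a point $[c]$ is $D/\mathrm{Stab}_C(c)$, and freeness of the diagonal action forces $\mathrm{Stab}_C(c)$ to act freely on $D$; in particular each such fiber is irreducible and, viewed under $f_2$, it is the image of $\{c\}\times D$ and hence maps \emph{onto} $D/G=\mathbb{P}^1$. Thus no irreducible curve is contracted by both $f_1$ and $f_2$, so $\phi$ has finite fibers; being proper and quasi-finite it is finite, and since source and target are smooth surfaces, miracle flatness makes it flat, say of degree $d$. To see that $\phi^*:\mathrm{Pic}(\mathbb{P}^1\times\mathbb{P}^1)_{\mathbb{R}}\to\mathrm{Pic}(X)_{\mathbb{R}}$ is an isomorphism I would compute $\rho(X)$: from $p_g=q=0$, Noether's formula gives $c_2(X)=12\chi(\mathcal{O}_X)-K_X^2=12-8=4$, so $b_2(X)=c_2(X)-2=2$ and $\rho(X)\le h^{1,1}(X)=b_2(X)=2$; since $F_1,F_2$ are independent ($F_1^2=F_2^2=0$, $F_1\cdot F_2>0$) we get $\rho(X)=2$. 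As $\phi^*$ is injective (because $\phi_*\phi^*=d\cdot\mathrm{id}$) between two-dimensional spaces, it is an isomorphism with inverse $\tfrac1d\phi_*$, and $h^1(X,\mathcal{O}_X)=q(X)=0$. Proposition \ref{criterion} then yields that $X$ is a Mori dream surface and that its effective, nef and semiample cones are the $\phi^*$-pullbacks of those of $\mathbb{P}^1\times\mathbb{P}^1$; as the latter is the cone spanned by the two rulings, one gets $Eff(X)=Nef(X)=\mathbb{R}_{\ge0}F_1+\mathbb{R}_{\ge0}F_2$, the cone generated by the two families of fibers.

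The only genuinely delicate point is the finiteness of $\phi$, namely that no irreducible curve is simultaneously vertical for both fibrations; this is exactly where the freeness of the diagonal action is essential, since it forces every fiber of $f_i$ to dominate the base of $f_{3-i}$ rather than be contracted by it. Everything else is numerical bookkeeping once $q(X)=0$ and $\rho(X)=2$ have been established.
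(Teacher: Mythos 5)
Your proof is correct, but it takes a genuinely different route from the paper's. The paper never forms the product morphism $\phi=(f_1,f_2)$: it works intrinsically on $X$, using the fact that $p_g=q=0$ and $K^2=8$ force $\rho(X)=2$ with hyperbolic Picard lattice, so that $Nef(X)$ is the cone spanned by the two fiber classes; these classes are semiample because they define the two fibrations, and the Mori dream property then follows from the Artebani--Hausen--Laface criterion (with $Eff(X)$ equal to the same cone, since any irreducible curve meets both nef fiber classes nonnegatively). You instead package the two fibrations into a single morphism $\phi\colon X\to\mathbb{P}^1\times\mathbb{P}^1$, prove it is finite (no curve is vertical for both fibrations, by freeness of the diagonal action) and flat (miracle flatness), check via $\rho(X)=2$ that $\phi^*$ is an isomorphism on $Pic(\cdot)_{\mathbb{R}}$ with inverse $\tfrac{1}{d}\phi_*$, and then invoke Proposition \ref{criterion} to transfer everything from the quadric. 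What your route buys: all the steps the paper leaves implicit (why $q=0$, why $\rho(X)=2$, why $Eff(X)$ and not merely $Nef(X)$ is rational polyhedral) are verified explicitly, and the unmixed fake quadrics are handled by exactly the same covering-transfer mechanism that the paper uses later for the Inoue, Chen, Burniat and Kulikov surfaces, which unifies the treatment. What the paper's route buys: it needs nothing beyond the existence of the two fibrations and the rank-two lattice, so it avoids entirely the finiteness and flatness of $\phi$ --- which, as you note yourself, is the one genuinely delicate point in your argument.
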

\begin{proof}
Let $X$ be a surface isogenous to a higher product with $p_g=q=0$ of unmixed type. Then we have the following diagram.

\begin{displaymath}
\xymatrix{
 & \ar[ld] C \times D \ar[d]  \ar[rd] &  \\
C \ar[d] & \ar[ld] X \cong (C \times D)/G \ar[rd] & \ar[d] D  \\
C/G \cong \mathbb{P}^1 &  & D/G \cong \mathbb{P}^1 }
\end{displaymath}

Then it is easy to see that $\Nef(X)$ is the convex cone generated by the two line bundles which are pull-back of ample line bundles of the two $\mathbb{P}^1.$ Because these bundles gave fibrations, we see that every nef divisor is semiample. Therefore $X$ is a Mori dream space.
\end{proof}

\begin{question}
(1) Let $S$ be a surface isogenous to a higher product of mixed type. Is $S$ a Mori dream surface? \\
(2) Let $S$ be a irreducible fake quadric. Is $S$ a Mori dream surface?
\end{question}

\subsection{$K^2=7$ cases}

There are few explicitly constructed examples of minimal surfaces of general type with $p_g=0$ and $K^2=7.$ A famous family of such surfaces is the family of Inoue surfaces. Recently, Chen constructed a new family of such surfaces in \cite{Chen1}. We will prove that they are Mori dream surfaces. \\

Let us recall the construction of Inoue surfaces, the first examples of minimal surfaces of general type with $p_g=0, K^2=7.$ Inoue considered product of four elliptic curves with a natural $(\mathbb{Z}/2\mathbb{Z})^5$-actions and smooth invariant complete intersections of divisors of degree $(2,2,2,0),(0,0,2,2).$ Then he constructed Inoue surfaces with $p_g=0, K^2=7$ as free quotients of these complete intersections. Mendes Lopes and Pardini proved that Inoue surfaces can be realized as bidouble coverings over nodal cubic surfaces. Let us recall their construction. We will follow the explanation of \cite{MLP} and see \cite{BC3, Inoue, MLP} for more details. \\

Consider the quadrilateral $p_1p_2p_3p_4$ in $\mathbb{P}^2.$ Let $p_5$ be the intersection of two lines $\overline{p_1p_2}$ and $\overline{p_3p_4}$ and let $p_6$ be the intersection of two lines $\overline{p_1p_4}$ and $\overline{p_2p_3}.$ Let $W \to \mathbb{P}^2$ be the blowup of these six points. Let $\overline{\Delta_1}$ be the strict transform of the line $\overline{p_1p_3},$ $\overline{\Delta_2}$ be the strict transform of the line $\overline{p_2p_4},$ $\overline{\Delta_3}$ be the strict transform of the line $\overline{p_5p_6}.$ They are $(-1)$-curves on $W.$ Let $\overline{c_1}$ be the strict transform of a general conic though $p_2p_4p_5p_6,$ $\overline{c_2}$ be the strict transform of a general conic though $p_1p_3p_5p_6$ and $\overline{c_3}$ be the strict transform of a general conic though $p_1p_2p_3p_4.$ Let $n_i$ be the strict transforms of the line $\overline{p_ip_{i+1}}.$ They are the only nodal curves on $W$ and let $W \to Y$ be the contraction of these four nodal curves.

Let $D_1=\overline{\Delta}_1+\overline{c_2}+n_1+n_2,$ $D_2=\overline{\Delta}_2+\overline{c_3},$ $D_3=\overline{\Delta}_3+\overline{c_1}+\overline{c'_1}+n_3+n_4$ where $\overline{c_1},\overline{c'_1} \in |\overline{c_1}|,\overline{c_2} \in |\overline{c_2}|,\overline{c_3} \in |\overline{c_3}|$ are general elements in the corresponding linear system. Then Mendes Lopes and Pardini showed that $D_1,D_2,D_3$ define a smooth bidouble covering $V$ over $W$ whose branch locus of $D_1+D_2+D_3.$ Then $V$ has eight exceptional curves and contracting these exceptional curves gives the Inoue surface $X.$ Moreover $X$ is a bidouble cover over $Y.$ Therefore we have the following commutative diagram.

\begin{displaymath}
\xymatrix{
V \ar[r] \ar[d] & X \ar[d] \\
W \ar[r] & Y }
\end{displaymath}

\begin{proposition}
Let $X$ be a Inoue surface. Then $X$ is a Mori dream surface.
\end{proposition}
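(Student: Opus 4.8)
The plan is to apply Proposition~\ref{criterion} to the bidouble cover $\pi\colon X\to Y$ in the bottom row of the diagram above. That proposition reduces the Mori dream property of $X$ to that of $Y$, provided $\pi$ is a finite flat morphism of degree $4$ between normal $\mathbb{Q}$-factorial projective surfaces, $h^1(X,\mathcal{O}_X)=0$, and $\pi^*\colon Pic(Y)_{\mathbb{R}}\cong Pic(X)_{\mathbb{R}}$ is an isomorphism with inverse $\tfrac14\pi_*$. So the proof breaks into two independent tasks: proving $Y$ is a Mori dream surface, and checking these hypotheses on $\pi$.

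First I would establish that $Y$ is a Mori dream surface. Since $W$ is the blow-up of $\mathbb{P}^2$ at the six points $p_1,\dots,p_6$, it is a weak del Pezzo surface with $K_W^2=3$, and each side $n_i$ passes through two vertices and one of $p_5,p_6$, so the four curves $n_1,\dots,n_4$ are pairwise disjoint $(-2)$-curves with $K_W\cdot n_i=0$. Hence $-K_W$ is nef and big, and contracting the $n_i$ yields a cubic surface $Y$ with four ordinary double points on which $-K_Y$ is ample. Thus $Y$ is a log del Pezzo surface; as ordinary double points are $\mathbb{Q}$-factorial, $Y$ is a normal $\mathbb{Q}$-factorial log Fano surface, and it is a Mori dream space by \cite{BCHM}.

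It then remains to verify the hypotheses on $\pi$. The map is a $(\mathbb{Z}/2\mathbb{Z})^2$-cover, hence finite of degree $4$; $X$ is smooth and $Y$ is $\mathbb{Q}$-factorial; and $h^1(X,\mathcal{O}_X)=q(X)=0$ for an Inoue surface. The crux is the isomorphism $\pi^*$. For a finite flat degree-$d$ morphism one has $\pi_*\pi^*=d\cdot\mathrm{id}$, so $\pi^*$ is injective on $Pic(Y)_{\mathbb{R}}$ with left inverse $\tfrac1d\pi_*$; it therefore suffices to show $\rho(X)=\rho(Y)$. On the $Y$ side, contracting the four independent $(-2)$-curves gives $\rho(Y)=\rho(W)-4=7-4=3$. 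On the $X$ side, $X$ is minimal of general type with $p_g=q=0$ and $K^2=7$, so Noether's formula gives Euler number $e(X)=12\chi(\mathcal{O}_X)-K^2=12-7=5$ and hence $b_2(X)=3$; because $p_g=0$ forces $H^2(X)$ to be entirely of type $(1,1)$ and algebraic, $\rho(X)=b_2(X)=3=\rho(Y)$. Thus $\pi^*$ is an isomorphism, its inverse is $\tfrac14\pi_*$, and Proposition~\ref{criterion}(1) yields that $X$ is a Mori dream surface (with parts (2)--(3) then transferring the cone and negative-curve data from $Y$).

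I expect the genuine obstacle to be the flatness of $\pi$ at the four nodes of $Y$, where $Y$ is not regular so that miracle flatness does not apply directly; this amounts to the local freeness of $\pi_*\mathcal{O}_X$ at those points and must be read off from the local structure of the Mendes Lopes--Pardini cover. The remaining ingredients---the Picard-number count and the injectivity of $\pi^*$---are then routine.
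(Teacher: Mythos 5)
Your proposal follows the same skeleton as the paper's proof: both reduce the statement to Proposition~\ref{criterion} applied to the bidouble cover $\pi\colon X\to Y$ onto the $4$-nodal cubic. Where you differ is in how the two sub-steps are justified, and in both cases you give more detail than the paper does. For the Mori dream property of $Y$, the paper invokes Okawa's theorem ($Y$ is a surjective image of the weak del Pezzo surface $W$, which is a Mori dream surface), while you observe directly that $Y$ is a normal $\mathbb{Q}$-factorial log del Pezzo surface and cite \cite{BCHM}; both routes are available in the paper's preliminaries and both are correct. For the isomorphism $\pi^*\colon Pic(Y)_{\mathbb{R}}\cong Pic(X)_{\mathbb{R}}$, the paper says only ``by the construction,'' whereas you supply an actual argument: injectivity from $\pi_*\pi^*=4\cdot\mathrm{id}$, together with the counts $\rho(Y)=\rho(W)-4=3$ and $\rho(X)=b_2(X)=3$ via Noether's formula, $q=0$, and $p_g=0$. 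That is a genuine improvement in rigor over the paper's one-line assertion.

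The one place your proposal goes astray is precisely the point you flag as the ``genuine obstacle'': flatness. You defer the verification that $\pi$ is flat over the four nodes, expecting it to follow from the local structure of the Mendes Lopes--Pardini cover. In fact it fails. Over each node $y\in Y$ the fiber consists of two points, and at each such point $x$ the germ of $\pi$ is the quotient map $\mathbb{C}^2\to\mathbb{C}^2/\{\pm 1\}$: the corresponding $(-1)$-curve in $V$ is fixed pointwise by one of the three involutions, and blowing down the standard picture (double cover of the minimal resolution of the $A_1$ germ branched along the $(-2)$-curve) yields exactly this germ. The quotient map is not flat, since $\mathbb{C}[x,y]$ splits over the invariant ring $R$ as $R\oplus M$ with $M=Rx+Ry$ a rank-one module requiring two generators; equivalently, $\pi_*\mathcal{O}_X$ is not locally free at the nodes. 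So the check you postpone cannot be carried out --- and, to be fair, the paper itself applies Proposition~\ref{criterion} to this non-flat map without comment. The repair is that flatness is never actually used in the proof of Proposition~\ref{criterion}: Okawa's theorem needs only a surjective morphism between normal $\mathbb{Q}$-factorial projective varieties, the identification of the cones uses only the assumed isomorphism $\pi^*$ with inverse $\frac{1}{d}\pi_*$, and $\pi_*\pi^*=d\cdot\mathrm{id}$ holds for any finite surjective morphism of normal varieties by the projection formula. With ``flat'' deleted from the hypotheses of the criterion, your argument (and the paper's) goes through unchanged.
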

\begin{proof}
Let $X$ be a such Inoue surface and $\pi : X \to Y$ be a bidouble covering. From the construction of \cite{MLP}, $W$ is a weak del Pezzo surface and we see that $Y$ is a surjective image of the Mori dream surface $W.$ Therefore $Y$ is a Mori dream space with $\rho(Y)=3.$ Then by the construction we see that $\pi^* : \Pic(Y) _{\mathbb{R}} \to \Pic(X) _{\mathbb{R}}$ is an isomorphism. Therefore we have the desired results.
\end{proof}

Moreover we can compare canonical bundles of $X$ and $Y$ as follows.

\begin{lemma}\cite{MLP}
We have the following equivalence
$$ 2K_X \sim \pi^*(-K_Y+c_1) $$
where $c_1$ is the image of $\overline{c_1}$ in $Y.$
\end{lemma}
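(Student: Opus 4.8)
The plan is to combine the canonical bundle formula for smooth bidouble covers with an explicit computation in the Picard group of the blow-up $W\to\mathbb{P}^2$, and then to transport the resulting identity down to $X$ and $Y$ along the two contractions appearing in the diagram. Write the four maps as the smooth bidouble cover $p\colon V\to W$, the cover $\pi\colon X\to Y$, and the birational contractions $\mu\colon V\to X$ (blowing down the eight exceptional $(-1)$-curves $F_1,\dots,F_8$) and $\sigma\colon W\to Y$ (blowing down the four nodal $(-2)$-curves $n_1,\dots,n_4$), so that $\sigma\circ p=\pi\circ\mu$. For the smooth bidouble cover the ramification formula together with $p^\ast D_i=2\widetilde{D_i}$, where $\widetilde{D_i}$ is the reduced preimage, yields
$$ 2K_V\sim p^\ast\big(2K_W+D_1+D_2+D_3\big). $$
Since the $n_i$ are $(-2)$-curves the contraction $\sigma$ is crepant, hence $K_W\sim\sigma^\ast K_Y$ and $\sigma^\ast c_1\sim\overline{c_1}$ (one checks $\overline{c_1}$ already meets every $n_i$ in degree $0$); for the blow-down $\mu$ one has $K_V\sim\mu^\ast K_X+\sum_j F_j$.

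Next I would reduce the assertion to a single identity of divisor classes on $V$. Applying the injective homomorphism $\mu^\ast$ to the target $2K_X\sim\pi^\ast(-K_Y+c_1)$ and using $\mu^\ast\pi^\ast=p^\ast\sigma^\ast$ together with the two formulas above, the claim becomes
$$ 2\sum_{j=1}^{8}F_j\;\sim\;p^\ast\big(3K_W+D_1+D_2+D_3-\overline{c_1}\big). $$
Everything on the right lives in $\Pic(W)=\mathbb{Z}\langle H,E_1,\dots,E_6\rangle$. Writing each ingredient in the standard basis — for instance $\overline{\Delta_1}=H-E_1-E_3$, $\overline{c_1}=2H-E_2-E_4-E_5-E_6$, $n_1=H-E_1-E_2-E_5$, and $K_W=-3H+\sum_{i=1}^{6}E_i$, and similarly for the remaining strict transforms — a direct bookkeeping collapses the right-hand class to
$$ 3K_W+D_1+D_2+D_3-\overline{c_1}\;=\;n_1+n_2+n_3+n_4. $$

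It then remains to establish $2\sum_j F_j\sim p^\ast(n_1+\cdots+n_4)$ from the local structure of the cover over the nodal curves. Each $n_i$ is contained in exactly one of the three branch divisors $D_1,D_2,D_3$, so $p$ is ramified of order $2$ along $n_i$ while the complementary involution acts freely; the reduced preimage of $n_i$ is therefore a pair of disjoint $(-1)$-curves among the $F_j$, giving $p^\ast n_i=2(F_{2i-1}+F_{2i})$. Summing over $i$ produces the desired equality as an identity of genuine divisors, and injectivity of $\mu^\ast$ on Picard groups then delivers the lemma.

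The main obstacle is this last step: matching the eight $(-1)$-curves of $\mu$ to the four nodal curves and verifying $p^\ast n_i=2\widetilde{n_i}$ from the local picture of a bidouble cover branched along a $(-2)$-curve. Two points deserve care. First, the statement is a linear and not merely numerical equivalence; this is safe precisely because the decisive step is an equality of actual divisors, so one never divides by $2$ and no $2$-torsion in $\Pic$ can interfere. Second, as a consistency check, $-K_Y+c_1$ pulls back on $W$ to $5H-E_1-2E_2-E_3-2E_4-2E_5-2E_6$, whose self-intersection is $7$, matching $4K_X^2=(-K_Y+c_1)^2$ with $K_X^2=7$ for the Inoue surface and thereby confirming the computation.
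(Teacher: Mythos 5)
The paper itself gives no proof of this lemma: it is quoted directly from Mendes Lopes--Pardini \cite{MLP}. Your argument is therefore a self-contained derivation rather than a variant of anything in the paper, and its architecture is correct. The bidouble ramification formula $2K_V\sim p^*(2K_W+D_1+D_2+D_3)$, the crepancy $K_W\sim\sigma^*K_Y$, the identity $K_V\sim\mu^*K_X+\sum_jF_j$, and the reduction of the claim to $2\sum_jF_j\sim p^*(3K_W+D_1+D_2+D_3-\overline{c_1})$ are all valid, and the bookkeeping does collapse as you assert: with $D_1=5H-3E_1-2E_2-3E_3-2E_5-2E_6$, $D_2=3H-E_1-2E_2-E_3-2E_4$ and $D_3=7H-E_1-2E_2-E_3-4E_4-4E_5-4E_6$, one gets $3K_W+D_1+D_2+D_3-\overline{c_1}=4H-2(E_1+\cdots+E_6)=n_1+n_2+n_3+n_4$ in $\Pic(W)$. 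Since $\Pic(W)$ is free, this class identity is a linear equivalence, and your torsion remark is well taken: the Inoue surface has $2$-torsion in its Picard group, so it genuinely matters that the argument never divides by $2$; what your route buys, compared with the paper's bare citation, is precisely such an explicit and checkable proof.

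Two details should be tightened. First, the splitting $p^*n_i=2(F_{2i-1}+F_{2i})$ does not follow merely from $n_i$ being contained in exactly one branch divisor $D_k$; you need $n_i$ to be \emph{disjoint} from the other two, since any point of $n_i\cap D_j$ ($j\neq k$) is a branch point of the residual double cover $\widetilde{n_i}\to n_i\cong\mathbb{P}^1$, and a branched irreducible double cover would destroy the splitting. Fortunately this is the same bookkeeping once more: for instance $n_1\cdot D_2=(H-E_1-E_2-E_5)\cdot(3H-E_1-2E_2-E_3-2E_4)=3-1-2=0$ and $n_1\cdot D_3=7-1-2-4=0$, and likewise for $n_2,n_3,n_4$; since these curves share no components, vanishing intersection numbers give disjointness, the resulting \'etale double cover of $\mathbb{P}^1$ splits, each piece is a $(-1)$-curve, and commutativity of the square $\sigma\circ p=\pi\circ\mu$ forces these eight curves to be exactly the $\mu$-exceptional ones. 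Second, your closing consistency check misstates the degree: $(2K_X)^2=(\pi^*(-K_Y+c_1))^2$ reads $4K_X^2=4\,(-K_Y+c_1)^2$, i.e.\ $K_X^2=(-K_Y+c_1)^2=7$, not $4K_X^2=(-K_Y+c_1)^2$. Neither point damages the proof.
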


Let us compute effective and nef cones of Inoue surfaces. Because $\rho \geq 3,$ we need some computation in order to determine the shapes of effective and nef cones. We have a simple strategy to compute effective cones and nef cones. Suppose that we know several effective divisors $E_1, \cdots, E_k$ on $X.$ Let $V=\Cl(X)_{\mathbb{R}}$ and $A \subset V$ be the rational polyhedral cone generated by these effective divisors. Suppose that we can check that the extremal rays of $A^\vee \subset V^\vee$ are nef divisors. Because $A \subset \Eff(X)$ we have $\Nef(X) \subset A^\vee \subset \Nef(X)$ and hence we see that $A=\Eff(X)$ and $A^\vee=\Nef(X).$

\begin{proposition}
Let $X$ be a Inoue surface. Then the effective cone of $X$ has three generators which are pullback of three $(-1)$-curves in $Y.$ The pull-back of three $(-1)$-curves are two elliptic curves and one genus 2 curve whose self-intersection numbers are all $-1.$ The nef cone of $X$ has three generators which are pullback of three nef divisors of $Y.$
\end{proposition}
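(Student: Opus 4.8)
The plan is to do all the real work on $Y$, where $\rho(Y)=3$, and then transport the answer to $X$ through the finite flat degree-$4$ morphism $\pi$ by invoking Proposition \ref{criterion}, whose hypotheses were checked in the preceding proposition. To set up coordinates I would use the basis $H,e_1,\dots,e_6$ of $Pic(W)_{\mathbb{R}}$ coming from $W\to\mathbb{P}^2$ and record the relevant classes: the four nodal curves $n_1,\dots,n_4$ (each a line through three of the six points, of class $H-e_i-e_j-e_k$) and the three $(-1)$-curves $\overline{\Delta}_1=H-e_1-e_3$, $\overline{\Delta}_2=H-e_2-e_4$, $\overline{\Delta}_3=H-e_5-e_6$. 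A direct intersection computation shows each $\overline{\Delta}_i$ is disjoint from every $n_j$ (geometrically the lines meet only at blown-up points). Since $\sigma\colon W\to Y$ contracts exactly the $n_j$, it follows that $\overline{\Delta}_i=\sigma^{*}\Delta_i$, that the images $\Delta_1,\Delta_2,\Delta_3$ are again $(-1)$-curves on $Y$, and that the intersection matrix $(\Delta_i\cdot\Delta_j)$ is the symmetric matrix with diagonal entries $-1$ and off-diagonal entries $+1$; this has determinant $4\neq 0$, so $\Delta_1,\Delta_2,\Delta_3$ is a basis of $Pic(Y)_{\mathbb{R}}$.

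Next I would run the strategy recalled just before the statement. Let $A\subset Cl(Y)_{\mathbb{R}}$ be the rational polyhedral cone generated by the effective classes $\Delta_1,\Delta_2,\Delta_3$. Using the intersection form I would compute $A^{\vee}$ and find its three extremal rays spanned by $\Delta_2+\Delta_3$, $\Delta_1+\Delta_3$, $\Delta_1+\Delta_2$. The crucial observation is that each of these is numerically one of the conic classes of the Mendes Lopes--Pardini construction, e.g. $\Delta_2+\Delta_3 = 2H-e_2-e_4-e_5-e_6 = c_1$, and each conic class is a base-point-free pencil (after blowing up its four base points), hence defines a fibration; moreover it has intersection zero with every $n_j$, so it descends to $Y$ and gives a fibration $Y\to\mathbb{P}^1$. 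Thus the three generators of $A^{\vee}$ are semiample, a fortiori nef. Since $A\subset\overline{Eff(Y)}$ forces $Nef(Y)\subset A^{\vee}$, while the displayed generators give $A^{\vee}\subset Nef(Y)$, we get $A^{\vee}=Nef(Y)$ and, dualizing, $Eff(Y)=A$. Hence $Eff(Y)$ is generated by the three $(-1)$-curves $\Delta_i$ and $Nef(Y)$ by the three conic pencils.

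Transporting to $X$ is then immediate from Proposition \ref{criterion}(2): $Eff(X)=\pi^{*}Eff(Y)$ and $Nef(X)=\pi^{*}Nef(Y)$, so $Eff(X)$ has three generators (the pullbacks of the $\Delta_i$) and $Nef(X)$ has three generators (the pullbacks of the conic pencils $c_1,c_2,c_3$). The step that needs genuine care — and which I expect to be the main obstacle — is passing from the class $\pi^{*}\Delta_i$ to the honest reduced negative curve $C_i$ on $X$ and computing its invariants. Because each $\Delta_i$ is a component of the branch divisor $D_i$, the bidouble cover ramifies along it to order $2$, so $\pi^{*}\Delta_i=2C_i$ with $C_i$ irreducible (it does not split, by Proposition \ref{criterion}(3)). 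Therefore $4C_i^2=(\pi^{*}\Delta_i)^2=4\Delta_i^2=-4$, giving $C_i^2=-1$ for all three; the factor of $2$ is exactly what converts the naive $-4$ into $-1$, and getting this bookkeeping right is the subtle point.

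Finally, for the genera I would feed the relation $2K_X\sim\pi^{*}(-K_Y+c_1)$ from the preceding lemma into adjunction. Writing $C_i=\tfrac12\pi^{*}\Delta_i$ and using the projection formula $\pi^{*}\alpha\cdot\pi^{*}\beta=4\,\alpha\cdot\beta$, one obtains $K_X\cdot C_i=(-K_Y+c_1)\cdot\Delta_i=1+c_1\cdot\Delta_i$, where $-K_Y\cdot\Delta_i=1$ because $\Delta_i$ is a $(-1)$-curve. Since $c_1=\Delta_2+\Delta_3$, the intersection form yields $c_1\cdot\Delta_1=2$ but $c_1\cdot\Delta_2=c_1\cdot\Delta_3=0$, so that $p_a(C_1)=1+\tfrac12(-1+3)=2$ while $p_a(C_2)=p_a(C_3)=1+\tfrac12(-1+1)=1$. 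This asymmetry — produced entirely by the particular $c_1$ appearing in the canonical relation — is precisely what produces two elliptic curves and one genus-$2$ curve, matching the stated conclusion, and it is the delicate computation on which the statement hinges.
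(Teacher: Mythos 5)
Your proposal is correct and follows essentially the same route as the paper: compute $Eff(Y)$ and $Nef(Y)$ by the dual-cone strategy (the cone on $\Delta_1,\Delta_2,\Delta_3$ against the cone on the sums $\Delta_i+\Delta_j$), transport both cones to $X$ through Proposition \ref{criterion}, and extract self-intersections and genera from $\pi^*\Delta_i=2\widetilde{\Delta_i}$ together with $2K_X\sim\pi^*(-K_Y+c_1)$ and adjunction; your identification $c_1=\Delta_2+\Delta_3$ is exactly the substitution the paper makes in its canonical-class computation. Two points of comparison. First, where you certify nefness of $\Delta_i+\Delta_j$ by recognizing these classes as the conic pencils of the Mendes Lopes--Pardini configuration (hence semiample, since they induce fibrations to $\mathbb{P}^1$ that descend past the contracted nodal curves), the paper instead uses the standard fact that an effective divisor is nef as soon as it meets each of its own irreducible components non-negatively, which is what its displayed intersection numbers verify; both are valid, and yours has the small advantage of giving semiampleness of the nef generators directly. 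Second, a minor omission: adjunction only computes the arithmetic genus $p_a(\widetilde{\Delta_i})$, so to call the three curves ``elliptic'' and ``genus $2$'' in the sense of smooth curves you still need the paper's additional observation that each $\widetilde{\Delta_i}$ lies in the one-dimensional fixed locus of one of the involutions of the bidouble cover acting on the smooth surface $X$, and is therefore smooth.
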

\begin{proof}
To describe nef cone and effective cone of a Inoue surface $X$ is equivalent to describe the same invariants for 4-nodal cubic $Y.$ Because $Y$ is obtained by contracting 4 nodal curves from $Bl_{6 pts} \mathbb{P}^2$ and we know the configuration of curves on this weak del Pezzo surface, we can compute nef cone and effective cone of $Y.$

From \cite{MLP} we see that $Y$ is contraction of six $(-2)$-curves of del Pezzo surface and there are three $(-1)$ curves on $Y.$ Let $\Delta_1$(resp. $\Delta_2,$ $\Delta_3$) be the image of $\overline{\Delta_1}$(resp. $\overline{\Delta_2},$ $\overline{\Delta_3}$). Because these curves are disjoint from the nodal curves in $W,$ they are $(-1)$-curves on $Y.$ We can see that $\Delta_1+\Delta_2, \Delta_2+\Delta_3, \Delta_3+\Delta_1$ are nef divisors on $Y.$ For any triple $\{ i,j,k \} = \{ 1,2,3 \}$ we have the followings
$$ (\Delta_i + \Delta_j) \cdot \Delta_i = 0, $$
$$ (\Delta_i + \Delta_j) \cdot \Delta_j = 0, $$
$$ (\Delta_i + \Delta_j) \cdot \Delta_k > 0. $$

We can see that $\Delta_1+\Delta_2, \Delta_2+\Delta_3, \Delta_3+\Delta_1$ are nef divisors on $Y.$ The rational polyhedral cone generated by $\Delta_1,\Delta_2,\Delta_3$ is a subcone of $\Eff(Y).$ Then the above computation shows that $\Delta_1, \Delta_2, \Delta_3$ generate the effective cone of $Y$ and $\Delta_1+\Delta_2, \Delta_2+\Delta_3, \Delta_3+\Delta_1$ generate the nef cone of $Y.$

Note that $\Delta_1, \Delta_2, \Delta_3$ are branch locus of the map $\pi : X \to Y.$ Therefore $\pi^*\Delta_i = 2\widetilde{\Delta_i}$ for an irreducible divisor $\widetilde{\Delta_i}$ for every $i=1,2,3.$ Because self-intersection of $\pi^*(\Delta_i)$ is $-4,$ we have $\widetilde{\Delta_i}^2=-1$ for $i=1,2,3.$ Moreover we have the following identity
$$ K_X \cdot \widetilde{\Delta_i} = \frac{1}{4} \pi^*(-K_Y+\Delta_2+\Delta_3) \cdot \pi^*{\Delta_i} = (-K_Y+\Delta_2+\Delta_3) \cdot {\Delta_i} $$
 for all $i=1,2,3.$
Because $\widetilde{\Delta_i}$ lies on the 1-dimensional fixed locus of an involution acting on $X$ we see that it is a smooth curve. Therefore they are elliptic curves and a genus $2$ curve whose self-intersections are all $-1.$ They generate $\Eff(X).$ Similarly, pull-backs of $\Delta_1+\Delta_2, \Delta_2+\Delta_3, \Delta_3+\Delta_1$ generate $\Nef(X).$
\end{proof}

Via similar method we can prove that Chen's surfaces are Mori dream surfaces. Let us briefly recall the construction of Chen in \cite{Chen1}. Let $p_0,p_1,p_2,p_3$ be points in $\mathbb{P}^2$ in general position and let $p'_i$ be the infinitely near point over $p_i$ which corresponds to the line $\overline{p_0p_i}$ for $i=1,2,3.$ Let $p$ be a point located outside of lines $\overline{p_0p_i},$ $\overline{p_ip_{i+1}}$ for $i=1,2,3$ and conics $c_1, c_2, c_3$ where $c_i$ is the unique conic passing through $p_i,p_{i+1},p'_{i+1},p_{i+2},p'_{i+2}$ in $\mathbb{P}^2.$ Let $W \to \mathbb{P}^2$ be the blowup of these eight points. Let $E_i$(resp. $E_i'$) be the total transform of $p_i$(resp. $p_i'$) and $H$ be the pull-back of a line in $\mathbb{P}^2.$ Let $\overline{\Gamma}$ be the strict transformation of the line $\overline{p_0p}$ and $\overline{E}$(resp. $E_0$) be the total transforms of $p$(resp. $p_0$). The linear system $|-2K_W-\overline{\Gamma}|$(resp. $|-2K_W-\overline{E}|$) consists of a single $(-1)$-curve, $\overline{B_2}$(resp. $\overline{B_3}$). \\

Consider curves $C_i \sim L-E_0-E_i-E_i'$ and $C_i' \sim E_i-E_i'$ for $i=1,2,3.$ Chen showed that $W$ is a weak del Pezzo surface with degree 1 and the above six curves are only nodal curves on $W.$ Let $Y$ be a surface obtained from $W$ by contracting these six nodal curves. Note that $\overline{E}, \overline{\Gamma}, \overline{B_2}, \overline{B_3}$ are disjoint with the six nodal curves. \\

Then Chen showed that three divisors $F_b+\overline{\Gamma}+C_1+C_1'+C_2+C_2', \overline{B_2}+C_3+C_3', \overline{B_3}$ defines a bidouble covering $\pi : V \to W$ branched over them, where $F_b$ is a smooth fiber of a pencil of lines passing through $p_0.$ There are $(-1)$-curves on $V$ and contracting these $(-1)$-curves we obtain $X$ which is a smooth minimal surface of general type with $p_g=0, K^2=7.$ Then $X$ is a bidouble cover over $Y.$ Let us call such $X$ a Chen's surface. Indeed, it is easy to prove that Chen's surfaces are Mori dream surfaces.

\begin{displaymath}
\xymatrix{
V \ar[r] \ar[d] & X \ar[d] \\
W \ar[r] & Y }
\end{displaymath}

\begin{proposition}
Let $X$ be a Chen's surface. Then $X$ is a Mori dream surface.
\end{proposition}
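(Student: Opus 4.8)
The plan is to mirror exactly the argument used for Inoue surfaces in the preceding proposition, since the geometric setup is identical: $X$ is a bidouble cover $\pi : X \to Y$, where $Y$ is obtained by contracting the six nodal curves on the weak del Pezzo surface $W$. First I would record that $W$ is a weak del Pezzo surface of degree $1$ (blow-up of $\mathbb{P}^2$ at eight points), hence a Mori dream surface by the log Fano criterion (Theorem on log del Pezzo surfaces). Since $Y$ is the image of $W$ under the contraction morphism $W \to Y$, which is surjective, Okawa's theorem gives that $Y$ is itself a Mori dream surface. The contraction of the six $(-2)$-curves drops the Picard number from $9$ to $\rho(Y)=3$.

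The key remaining point is to invoke Proposition \ref{criterion}: it suffices to verify that $\pi : X \to Y$ is a finite flat morphism of degree $d=4$ between normal $\mathbb{Q}$-factorial projective surfaces with $h^1(X,\mathcal{O}_X)=0$, and that $\pi^* : \Pic(Y)_{\mathbb{R}} \to \Pic(X)_{\mathbb{R}}$ is an isomorphism whose inverse is $\tfrac14 \pi_*$. The condition $h^1(X,\mathcal{O}_X)=0$ holds because $X$ has $p_g=q=0$. Flatness and finiteness are automatic for a bidouble cover. The substantive input is the isomorphism $\pi^* : \Pic(Y)_{\mathbb{R}} \cong \Pic(X)_{\mathbb{R}}$, which I expect to be the main obstacle: one must check that the $(\mathbb{Z}/2\mathbb{Z})^2$-action realizing the bidouble cover acts trivially on $\Pic(X)_{\mathbb{R}}$, i.e.\ that $\Pic(X)^G_{\mathbb{R}} = \Pic(X)_{\mathbb{R}}$, equivalently that the Picard numbers agree, $\rho(X)=\rho(Y)=3$. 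This follows from the construction in \cite{Chen1} together with Remark (3) preceding Proposition \ref{criterion}, which supplies the canonical isomorphism $(A_*Y)_{\mathbb{R}} \cong (A_*X)^G_{\mathbb{R}}$ and identifies $\tfrac{1}{|G|}\pi_*$ as the inverse of $\pi^*$ on the invariant part.

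Granting this, Proposition \ref{criterion}(1) immediately yields that $X$ is a Mori dream surface because $Y$ is. Concretely, the whole proof reduces to the two sentences: \emph{$Y$ is the surjective image of the Mori dream surface $W$, hence a Mori dream surface with $\rho(Y)=3$; and $\pi^* : \Pic(Y)_{\mathbb{R}} \to \Pic(X)_{\mathbb{R}}$ is an isomorphism by the construction, so Proposition \ref{criterion} gives the result.} I would phrase it precisely in parallel with the Inoue case, so that the effective- and nef-cone computations carried out there can later be transported verbatim. The anticipated difficulty is purely the verification that $\pi^*$ is an isomorphism at the level of real Picard groups; once the eight blown-up points and six nodal curves of \cite{Chen1} are in hand this is a finite rank count, but it is the only step not formally reducible to a cited theorem.
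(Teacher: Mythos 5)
Your proposal follows essentially the same route as the paper: the paper likewise deduces that $Y$ is a Mori dream surface with $\rho(Y)=3$ from its being a contraction of (equivalently, a surjective image of) the degree-one weak del Pezzo surface $W$, and then concludes via Proposition \ref{criterion} using the finite cover $\pi:X\to Y$ and the equality $\rho(X)=\rho(Y)=3$ coming from Chen's construction. Your version simply spells out the intermediate steps (Okawa's theorem, $h^1(X,\mathcal{O}_X)=0$, and the identification of $\tfrac14\pi_*$ as the inverse of $\pi^*$) that the paper leaves implicit.
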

\begin{proof}
Because $Y$ can be obtain contracting six nodal curves from a weak del Pezzo surfaces of degree 1, we see that $Y$ is a Mori dream surface with $\rho(Y)=3.$ From the construction $X$ has a finite map $X \to Y$ and $\rho(X)=3.$ Therefore $X$ is a Mori dream surface.
\end{proof}

Now let us compute effective cones and nef cones of Chen's surfaces. He computed intersection numbers between $\overline{E}, \overline{\Gamma}, \overline{B_2}, \overline{B_3}$ as follows.


\begin{center}
\begin{tabular}{|c|c|c|c|c|} \hline
$\cdot$ & $\overline{E}$ & $\overline{\Gamma}$ & $\overline{B_2}$ & $\overline{B_3}$ \\
\hline $\overline{E}$ & -1 & 1 & 1 & 3 \\
\hline $\overline{\Gamma}$ & 1 & -1 & 3 & 1 \\
\hline $\overline{B_2}$ & 1 & 3 & -1 & 1 \\
\hline $\overline{B_3}$ & 3 & 1 & 1 & -1 \\
\hline
\end{tabular}
\end{center}

\bigskip

Because they are disjoint from the six nodal curves, there curves are pull-back of curves in $Y$ and the intersection numbers are the same. Let $E$, $\Gamma$, $B_2$, $B_3$ be the image in $Y.$ Of course, the intersections $E,\Gamma,B_2,B_3$ are the same as above. Now we can describe nef cones and effective cones of Chen's surfaces explicitly.

\begin{proposition}
Let $Y$ be a weak del Pezzo surface of degree 1 described as above. Then $\Eff(Y)$ is a rational polyhedral cone generated by $E,\Gamma,B_2,B_3.$ and $\Nef(X)$ is a rational polyhedral cone generated by $\Gamma+B_3,B_2+B_3,E+\Gamma,E+B_2.$
\end{proposition}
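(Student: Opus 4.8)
The plan is to follow the ``simple strategy'' for effective and nef cones recalled earlier in the section. Write $V=Cl(Y)_{\mathbb{R}}$, which is $3$-dimensional since $\rho(Y)=3$, and let $A\subset V$ be the rational polyhedral cone generated by the four classes $E,\Gamma,B_2,B_3$. Each of these is the image of an honest curve on $W$: the curves $\overline{E},\overline{\Gamma},\overline{B_2},\overline{B_3}$ are disjoint from the six contracted nodal curves, so they descend to irreducible curves on $Y$ with the same intersection numbers. Hence they are effective and $A\subseteq\overline{Eff(Y)}$. The goal is to show that the extremal rays of the dual cone $A^\vee$ are nef; once this is done the strategy gives at once $A=\overline{Eff(Y)}$ and $A^\vee=Nef(Y)$, and since $Y$ is a Mori dream surface $\overline{Eff(Y)}=Eff(Y)$ is already rational polyhedral.

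First I would compute $A^\vee$ purely from the given intersection table. The four generators of $A$ are linearly dependent (four vectors in the $3$-dimensional $V$), so $A$ is a cone over a quadrilateral and has exactly four facets. Pairing each candidate divisor against $E,\Gamma,B_2,B_3$ one finds
\begin{align*}
(E+\Gamma)\cdot E &= (E+\Gamma)\cdot\Gamma = 0, & (E+\Gamma)\cdot B_2 &= (E+\Gamma)\cdot B_3 = 4,\\
(\Gamma+B_3)\cdot\Gamma &= (\Gamma+B_3)\cdot B_3 = 0, & (\Gamma+B_3)\cdot E &= (\Gamma+B_3)\cdot B_2 = 4,
\end{align*}
together with the two analogous pairs for $B_2+B_3$ and $E+B_2$. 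Thus each of the four divisors vanishes on an adjacent pair of generators and is strictly positive on the other two; this exhibits the four facets $\{E,\Gamma\},\{\Gamma,B_3\},\{B_3,B_2\},\{B_2,E\}$ (the cyclic order of the rays being $E,\Gamma,B_3,B_2$) and identifies the facet normals, so $A^\vee$ is exactly the cone generated by $\Gamma+B_3,\,B_2+B_3,\,E+\Gamma,\,E+B_2$. A useful sanity check is that each of these four classes has self-intersection $0$ and that $E+\Gamma+B_2+B_3$ pairs strictly positively with all four generators, so $A$ is pointed and $E+\Gamma+B_2+B_3$ is an interior (ample) class.

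The hard part is to verify that these four facet normals are genuinely nef, i.e. that they meet \emph{every} irreducible curve of $Y$ non-negatively and not merely the four distinguished curves $E,\Gamma,B_2,B_3$ --- checking only the latter would make the argument circular. I would establish this by pulling back along the contraction $\sigma\colon W\to Y$. Because $\overline{E},\overline{\Gamma},\overline{B_2},\overline{B_3}$ are disjoint from the six contracted nodal $(-2)$-curves, one has $\sigma^{*}(E+\Gamma)=\overline{E}+\overline{\Gamma}$ and similarly for the others, and by the projection formula a divisor $D$ on $Y$ is nef if and only if $\sigma^{*}D$ is nef on $W$. On the weak del Pezzo surface $W$ of degree $1$ nefness can be certified honestly: either one checks non-negativity against the finite list of $(-1)$- and $(-2)$-curves generating $\overline{Eff(W)}$, or --- more cleanly --- one observes that $\overline{E}+\overline{\Gamma}$ has self-intersection $0$, arithmetic genus $0$, and $(\overline{E}+\overline{\Gamma})\cdot K_W=-2$, so by Riemann--Roch it moves in a base-point-free pencil and is the fibre class of a ruling $W\to\mathbb{P}^1$, hence nef; the same applies to the other three classes. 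This recognition of the four square-zero classes as conic-bundle fibre classes is, I expect, the main point requiring care.

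With nefness in hand the argument closes: $A\subseteq\overline{Eff(Y)}$ gives $Nef(Y)\subseteq A^\vee$, while nefness of the generators of $A^\vee$ gives $A^\vee\subseteq Nef(Y)$; hence $Nef(Y)=A^\vee$ is generated by $\Gamma+B_3,B_2+B_3,E+\Gamma,E+B_2$, and dualizing, $Eff(Y)=\overline{Eff(Y)}=A$ is generated by $E,\Gamma,B_2,B_3$. An alternative to the nefness step would be to invoke \cite[Proposition 1.1]{AL}: since $Y$ is a Mori dream surface with $\rho(Y)=3$, $Eff(Y)$ is generated by its negative curves, so it would suffice to show that $E,\Gamma,B_2,B_3$ are the only negative curves on $Y$ --- but verifying this amounts to classifying the $(-1)$-curves of $W$ that meet the six nodal curves trivially, which is essentially the same computation in a different guise.
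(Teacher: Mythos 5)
Your overall architecture matches the paper's: both proofs run the ``simple strategy'' (exhibit the four effective classes, compute the dual cone, check its generators are nef), and your combinatorial identification of $A^\vee$ --- four rays in cyclic order $E,\Gamma,B_3,B_2$ with facet normals $E+\Gamma,\ \Gamma+B_3,\ B_3+B_2,\ B_2+E$ --- is correct and agrees with the statement. The problem is the step you yourself flag as ``the hard part''. Your claim that checking nefness only against $E,\Gamma,B_2,B_3$ ``would make the argument circular'' is mistaken, and this misdiagnosis is what pushes you into a flawed detour. The standard criterion is: an \emph{effective} divisor is nef provided it meets each of its own irreducible \emph{components} non-negatively, because any irreducible curve that is not a component of an effective divisor automatically meets it non-negatively. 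Since $\Gamma+B_3$ is effective with only $\Gamma$ and $B_3$ as components, the two equalities $(\Gamma+B_3)\cdot\Gamma=(\Gamma+B_3)\cdot B_3=0$ already prove it is nef, with no reference to any other curve on $Y$; likewise for $B_2+B_3$, $E+\Gamma$, $E+B_2$. This is exactly the paper's proof (``From the above intersection numbers we see that $\Gamma+B_3,B_2+B_3,E+\Gamma,E+B_2$ are nef divisors''), and the paper states the criterion explicitly later when treating product-quotient surfaces (``they are effective divisors whose intersection with any of their component is nonnegative'').

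Moreover, the substitute you call ``cleaner'' contains a genuine gap. On a weak del Pezzo surface it is \emph{not} true that $D^2=0$, $p_a(D)=0$, $D\cdot K_W=-2$ together with Riemann--Roch force $|D|$ to be a base-point-free pencil: the fixed part can contain a $(-2)$-curve. Concretely, let $W'$ be the blow-up of $\mathbb{P}^2$ at a point $q_1$ and a point $q_2$ infinitely near $q_1$ (a weak del Pezzo surface of degree $7$), with $h$ the pull-back of a line and $e_1,e_2$ the total transforms of the exceptional classes. Then $D=h-e_2$ satisfies $D^2=0$, $D\cdot K_{W'}=-2$, $p_a(D)=0$, and $h^0(D)\geq 2$ by Riemann--Roch, yet $|D|=C+|h-e_1|$ where $C=e_1-e_2$ is the $(-2)$-curve, and $D\cdot C=-1$, so $D$ is not nef. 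Hence ``by Riemann--Roch it moves in a base-point-free pencil'' is not a valid inference; to run your argument you would have to identify the four classes explicitly (e.g.\ $\overline{E}+\overline{\Gamma}=H-E_0$ is indeed the pencil of lines through $p_0$, Chen's $F_b$, but you never verify this, nor the analogous claims for $\overline{\Gamma}+\overline{B_3}$, $\overline{B_2}+\overline{B_3}$, $\overline{E}+\overline{B_2}$) or otherwise exclude fixed $(-2)$-components --- and the cheapest way to do that is the component criterion you set aside. Your remaining fallback, checking the classes against all negative curves of $W$, is legitimate but is left unexecuted and, as you note, amounts to redoing the classification of negative curves. So: right strategy and right cone combinatorics, but the nefness verification, the heart of the proposition, rests as written on a false principle; the one-line repair is the paper's own argument.
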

\begin{proof}
From \cite{Chen1} we see that $Y$ is contraction of six $(-2)$-curves of the weak del Pezzo surface $W$ and there are four $(-1)$-curves $E, \Gamma, B_2, B_3$ on $Y.$ From the above intersection numbers we see that $\Gamma+B_3,B_2+B_3,E+\Gamma,E+B_2$ are nef divisors. We can directly check that the two cones are dual to each other. Therefore we obtain the desired result from our strategy.
\end{proof}

Now we know the extremal rays of $\Eff(Y).$ Let us discuss their pull-backs. First we can compare the canonical divisors.

\begin{lemma}\cite{Chen1}
We have the following isomorphism.
$$ 2K_X \simeq \pi^*(-2K_Y + \Gamma) $$
\end{lemma}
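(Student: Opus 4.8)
The plan is to read the formula off the standard canonical bundle formula for a smooth bidouble (Galois $(\mathbb{Z}/2)^2$) cover applied to the resolved cover $\pi_V\colon V\to W$, and then to descend the resulting equivalence along the two birational contractions $f\colon V\to X$ and $g\colon W\to Y$ in the commutative square, using $\pi\circ f=g\circ \pi_V$. Recall that for a smooth bidouble cover with reduced branch divisors $D_1,D_2,D_3$ the ramification divisor $R=R_1+R_2+R_3$ satisfies $\pi_V^*D_i=2R_i$ and $K_V=\pi_V^*K_W+R$, so that
$$ 2K_V \sim \pi_V^*\bigl(2K_W+D_1+D_2+D_3\bigr). $$
Everything therefore reduces to identifying the class of the total branch divisor $D_1+D_2+D_3$ on $W$ and then pushing down.

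The key computation uses the linear equivalences already available on $W$. Writing $N=C_1+C_1'+C_2+C_2'+C_3+C_3'$ for the sum of the six nodal $(-2)$-curves, the branch data give $D_1+D_2+D_3 = F_b+\overline{\Gamma}+\overline{B_2}+\overline{B_3}+N$. From the blow-up description, the fiber $F_b$ of the pencil of lines through $p_0$ and the strict transform $\overline{\Gamma}$ of the line $\overline{p_0p}$ differ exactly by the exceptional curve over $p$, so $F_b\sim \overline{\Gamma}+\overline{E}$. Combining this with the two identities $\overline{\Gamma}+\overline{B_2}\sim -2K_W$ and $\overline{E}+\overline{B_3}\sim -2K_W$, which come from the linear systems $|-2K_W-\overline{\Gamma}|=\{\overline{B_2}\}$ and $|-2K_W-\overline{E}|=\{\overline{B_3}\}$, I would regroup
$$ D_1+D_2+D_3 \sim \overline{\Gamma}+(\overline{\Gamma}+\overline{B_2})+(\overline{E}+\overline{B_3})+N \sim \overline{\Gamma}-4K_W+N, $$
so that $2K_W+D_1+D_2+D_3\sim -2K_W+\overline{\Gamma}+N$ and hence $2K_V\sim \pi_V^*(-2K_W+\overline{\Gamma}+N)$.

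It remains to transport this across the square. Since $g$ contracts only $(-2)$-curves, it is crepant, so $K_W\sim g^*K_Y$, and since $\overline{\Gamma}$ is disjoint from the nodal curves one has $\overline{\Gamma}\sim g^*\Gamma$; by commutativity this yields $\pi_V^*(-2K_W+\overline{\Gamma})\sim f^*\pi^*(-2K_Y+\Gamma)$. On the other hand $2K_V\sim 2f^*K_X+2\mathcal{E}$, where $\mathcal{E}$ is the sum of the $(-1)$-curves contracted by $f$, and these are precisely the components of $\pi_V^{-1}(N)$, so $\pi_V^*N$ is supported on the $f$-exceptional locus. Applying $f_*$ to the equality $2f^*K_X+2\mathcal{E}\sim f^*\pi^*(-2K_Y+\Gamma)+\pi_V^*N$ and using $f_*f^*=\mathrm{id}$, $f_*\mathcal{E}=0$ and $f_*\pi_V^*N=0$, I obtain $2K_X\sim \pi^*(-2K_Y+\Gamma)$, as claimed. (The same conclusion can be read off directly on $Y$ via the orbifold bidouble formula, since $Y$ has only rational double points and $K_Y$ is Cartier.)

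The main obstacle I anticipate is the bookkeeping at the level of $V$: one must verify that the preimage $\pi_V^{-1}(N)$ coincides with the curves contracted by $f$, i.e. that each nodal curve, being disjoint from the remaining branch components, splits in the degree-four cover into a pair of $(-1)$-curves, so that the two contractions are compatible and $f_*\pi_V^*N=0$. Establishing the equivalence $F_b\sim\overline{\Gamma}+\overline{E}$ and the two anticanonical relations is a routine intersection computation on the degree-one weak del Pezzo surface $W$, but it is exactly where the specific geometry of Chen's eight-point configuration enters and must be checked carefully.
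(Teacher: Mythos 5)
Your argument is correct, but there is nothing in the paper to compare it against: the lemma is stated with a citation to Chen's article \cite{Chen1} and the paper gives no proof of it whatsoever. What you have produced is therefore a self-contained verification from exactly the data the paper does record, namely the bidouble-cover formula $2K_V\sim\pi_V^*(2K_W+D_1+D_2+D_3)$, the defining equivalences $\overline{B_2}\sim -2K_W-\overline{\Gamma}$ and $\overline{B_3}\sim -2K_W-\overline{E}$, the blow-up relation $F_b\sim\overline{\Gamma}+\overline{E}$, and descent along the two contractions; all of these steps check out, including the crepancy $K_W\sim g^*K_Y$ and the identity $g^*\Gamma=\overline{\Gamma}$ coming from disjointness of $\overline{\Gamma}$ from the nodal curves. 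The one point you flag as needing care is indeed the only delicate one, and it resolves the way you expect: each nodal curve is a smooth rational component of one branch divisor disjoint from the other two branch divisors, so the induced double cover of it is \'etale over $\mathbb{P}^1$ and hence splits into two disjoint $(-1)$-curves; these twelve curves are precisely the $f$-exceptional locus, consistently with $K_V^2=(-2K_W+\overline{\Gamma}+N)^2=-5$ and $K_X^2=-5+12=7$. In fact, since $\pi_V^*N=2\mathcal{E}$ on the nose, your final push-forward can be streamlined: the relation $2f^*K_X+2\mathcal{E}\sim f^*\pi^*(-2K_Y+\Gamma)+2\mathcal{E}$ gives $2f^*K_X\sim f^*\pi^*(-2K_Y+\Gamma)$ directly, and injectivity of $f^*$ (or your $f_*$ argument) finishes. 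The trade-off between the two treatments is the obvious one: the paper's citation keeps the exposition short, while your derivation makes the provenance of the coefficient of $\Gamma$ transparent and renders the discussion of Chen's surfaces self-contained, at the cost of having to carry out the splitting verification on $V$.
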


Now we have the following conclusion.

\begin{proposition}
Let $X$ be a Chen's surface with $K^2=7.$ Then the effective cone of $X$ has four generators which are pullback of four $(-1)$-curves in $Y.$ The pull-back of four $(-1)$-curves are three curves with self-intersection $-1$ whose arithmetic genus are $1,2,3$ and one (arithmetic) genus $2$ curve whose self-intersection number is $-4.$ The nef cone of $X$ has four generators which are pullback of four nef divisors on $Y.$
\end{proposition}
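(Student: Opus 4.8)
The plan is to reduce every statement about $X$ to the corresponding statement about the anticanonical model $Y$, which has already been analyzed, by invoking Proposition \ref{criterion} for the degree-$4$ bidouble cover $\pi\colon X\to Y$. First I would check the hypotheses of that proposition. Since $X$ is of general type with $p_g=q=0$ we have $h^1(X,\mathcal{O}_X)=0$; both surfaces are normal, projective and $\mathbb{Q}$-factorial, and $\pi$ is finite flat of degree $4$. As $\pi$ is a finite surjective morphism of normal surfaces, $\pi^{\ast}\colon Pic(Y)_{\mathbb{R}}\to Pic(X)_{\mathbb{R}}$ is injective; because $\rho(X)=\rho(Y)=3$ (established above) it is an isomorphism, and the relation $\pi_{\ast}\pi^{\ast}=4\,\mathrm{id}$ identifies its inverse with $\tfrac14\pi_{\ast}$. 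Proposition \ref{criterion}(2) then gives $Eff(X)=\pi^{\ast}Eff(Y)$ and $Nef(X)=\pi^{\ast}Nef(Y)$. Since the preceding proposition shows that $Eff(Y)$ is generated by the four $(-1)$-curves $E,\Gamma,B_2,B_3$ and $Nef(Y)$ by $\Gamma+B_3$, $B_2+B_3$, $E+\Gamma$, $E+B_2$, this immediately yields the asserted four generators of $Eff(X)$ and of $Nef(X)$; by Proposition \ref{criterion}(3) the pull-backs of these negative curves do not split, so they remain the extremal generators.

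It then remains to identify the pull-backs $\pi^{\ast}E,\pi^{\ast}\Gamma,\pi^{\ast}B_2,\pi^{\ast}B_3$ as honest curves and to compute their invariants, and the key point is to read off the multiplicities from the branch data of the bidouble cover. In the construction the three branch divisors are $F_b+\overline{\Gamma}+C_1+C_1'+C_2+C_2'$, $\overline{B_2}+C_3+C_3'$ and $\overline{B_3}$, so $\Gamma$, $B_2$ and $B_3$ each lie in exactly one branch divisor while $E$ lies in none. Hence over a general point of $\Gamma$ (resp. $B_2$, $B_3$) the inertia group is $\mathbb{Z}/2\mathbb{Z}$, giving ramification index $2$; combined with the no-splitting statement of Proposition \ref{criterion}(3) this forces $\pi^{\ast}\Gamma=2\widetilde{\Gamma}$, $\pi^{\ast}B_2=2\widetilde{B_2}$, $\pi^{\ast}B_3=2\widetilde{B_3}$ for irreducible curves $\widetilde{\Gamma},\widetilde{B_2},\widetilde{B_3}$, exactly as in the Inoue case. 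Since $E$ avoids the branch locus, $\pi$ is unramified over its generic point, so $\pi^{\ast}E=\widetilde{E}$ is reduced and, by the same no-splitting statement, irreducible.

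Finally I would compute the numerical invariants. The projection formula $(\pi^{\ast}C)^2=4\,C^2$ applied to each $(-1)$-curve gives $4\widetilde{\Gamma}^2=4\widetilde{B_2}^2=4\widetilde{B_3}^2=-4$, hence $\widetilde{\Gamma}^2=\widetilde{B_2}^2=\widetilde{B_3}^2=-1$, while $\widetilde{E}^2=4E^2=-4$. For the arithmetic genera I would use $2K_X\simeq\pi^{\ast}(-2K_Y+\Gamma)$ together with the projection formula to obtain, for any curve with $\pi^{\ast}C=m\widetilde{C}$, the identity $K_X\cdot\widetilde{C}=\tfrac{2}{m}(-2K_Y+\Gamma)\cdot C$. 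Using $K_Y\cdot C=-1$ for each $(-1)$-curve (these curves avoid the singular points of $Y$) together with the intersection table, one finds $(-2K_Y+\Gamma)\cdot\Gamma=1$, $(-2K_Y+\Gamma)\cdot B_3=3$, $(-2K_Y+\Gamma)\cdot B_2=5$ and $(-2K_Y+\Gamma)\cdot E=3$, whence $K_X\cdot\widetilde{\Gamma}=1$, $K_X\cdot\widetilde{B_3}=3$, $K_X\cdot\widetilde{B_2}=5$ (with $m=2$) and $K_X\cdot\widetilde{E}=6$ (with $m=1$). Adjunction $2p_a(\widetilde{C})-2=\widetilde{C}^2+K_X\cdot\widetilde{C}$ then yields $p_a(\widetilde{\Gamma})=1$, $p_a(\widetilde{B_3})=2$, $p_a(\widetilde{B_2})=3$ and $p_a(\widetilde{E})=2$, which is exactly the asserted list of pairs $(-1,1),(-1,2),(-1,3),(-4,2)$.

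The step I expect to be the main obstacle is the middle one: rigorously pinning down the multiplicities of the four pull-backs. This is where one must use the actual geometry of the bidouble cover rather than the cone formalism alone — identifying which of the four $(-1)$-curves lie in the branch locus, computing the inertia group (hence the ramification index $2$), and invoking Proposition \ref{criterion}(3) to rule out splitting so that $\pi^{\ast}E$ is irreducible and the three branch curves pull back as $2\widetilde{C}$. Once these multiplicities are fixed, the self-intersection and genus computations are routine applications of the projection formula and adjunction.
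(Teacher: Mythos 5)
Your proposal is correct and follows essentially the same route as the paper: identify $Eff(X)$ and $Nef(X)$ with the pullbacks of $Eff(Y)$ and $Nef(Y)$ via Proposition \ref{criterion}, note that $\Gamma, B_2, B_3$ lie in the branch locus (so pull back as $2\widetilde{C}$) while $E$ does not (so pulls back reduced and irreducible), and then compute self-intersections and $K_X\cdot\widetilde{C}$ from $2K_X\simeq\pi^*(-2K_Y+\Gamma)$ and the intersection table, with adjunction giving the genera $(-1,1),(-1,2),(-1,3),(-4,2)$. Your numerical computations agree with the paper's ($K_X\cdot\widetilde{\Gamma}=1$, $K_X\cdot\widetilde{B_3}=3$, $K_X\cdot\widetilde{B_2}=5$, $K_X\cdot\widetilde{E}=6$), and your explicit adjunction step only makes explicit what the paper leaves implicit.
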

\begin{proof}
Note that $\Gamma, B_2, B_3$ lie on the branch locus of the map $\pi : X \to Y.$ Therefore $\pi^*\Gamma = 2\widetilde{\Gamma}$ for an irreducible divisor $\widetilde{\Gamma}$ and $\pi^*B_i = 2\widetilde{B_i}$ for an irreducible divisor $\widetilde{B_i}$ for $i=1,2.$ Because self-intersections of these curves are all $-4,$ we have $\widetilde{\Gamma}^2=\widetilde{B_2}^2=\widetilde{B_3}^2=-1.$ Moreover we have the following identities.
$$ K_X \cdot \widetilde{\Gamma}= \frac{1}{4} \pi^*(-2K_Y + \Gamma) \cdot \pi^*(\Gamma) = (-2K_Y + \Gamma) \cdot \Gamma = 1 $$
$$ K_X \cdot \widetilde{B_2}= \frac{1}{4} \pi^*(-2K_Y + \Gamma) \cdot \pi^*(B_2) = (-2K_Y + \Gamma) \cdot B_2 = 5 $$
$$ K_X \cdot \widetilde{B_3}= \frac{1}{4} \pi^*(-2K_Y + \Gamma) \cdot \pi^*(B_3) = (-2K_Y + \Gamma) \cdot B_3 = 3 $$

Note that $E$ does not lie on the branch locus of the map $\pi : X \to Y.$ Therefore $\pi^*E = \widetilde{E}$ for an irreducible divisor $\widetilde{E}.$ Because $E^2=-1,$ we have $\widetilde{E}^2=-4.$ Moreover we have the following identity.
$$ K_X \cdot \widetilde{E}= \frac{1}{2} \pi^*(-2K_Y + \Gamma) \cdot \pi^*(E) = 2(-2K_Y + \Gamma) \cdot E = 6 $$
From the proposition \ref{criterion}, we see that $\widetilde{E}, \widetilde{\Gamma}, \widetilde{B_2}, \widetilde{B_3}$ are negative curves generate $\Eff(X).$ Similarly, pull-backs of $\Gamma+B_3,B_2+B_3,E+\Gamma,E+B_2$ generate $\Nef(X).$
\end{proof}

\section{Minimal surfaces of general type with $p_g=0, 2 \leq K^2 \leq 6.$}

When $K_X^2$ becomes smaller, it becomes harder to check whether a given surface $X$ is a Mori dream space or not. However for $2 \leq K^2 \leq 6$ cases there are several classical surfaces of general type with $p_g=0$ for which we can show that they are Mori dream surfaces. Our results in this section were motivated by \cite{Alexeev, AO, Coughlan} and some of the results easily follow from them. Especially, semigroups of effective divisors of some surfaces discussed in this section were computed in \cite{Alexeev, Coughlan}.

\subsection{Abelian coverings of weak del Pezzo surfaces}

Let us recall the definition of the abelian covering.

\begin{definition}
An abelian covering of $Y$ with an abelian group $G$ is a variety $X$ with a faithful action of $G$ on $X$ such that there is a finite morphism $\pi : X \to Y$ which is the quotient map of $X$ by the group action $G.$
\end{definition}

Let $X$ be a surface of general type with $p_g=q=0$ and let $\pi : X \to Y$ be an abelian covering where $Y$ is a del Pezzo surface. There are lots of surfaces of general type with $p_g=q=0$ constructed in this way, e.g. Burniat surfaces, Kulikov surfaces, some numerical Campedelli surfaces, etc. The key property in some of these examples is that there is a natural isomorphism $\Pic(X) _{\mathbb{R}} \cong \Pic(Y) _{\mathbb{R}}$ which preserves the intersection pairing(up to scale) and we can identify effective cones and nef cones of $X$ and $Y.$ This phenomena was observed in \cite{Alexeev, AO, Coughlan} for the some of such surfaces and play key role in their works. In these cases, because $Y$ is a Mori dream space we see that the effective cone of $X$ is a rational polyhedral cone and every nef divisor of $X$ is semiample. Therefore we see that $X$ is a Mori dream space.

\subsection{Weak del Pezzo surfaces}

Let $Y$ be a smooth rational surface. When $-K_Y$ is ample(resp. nef and big) we call $Y$ a (resp. weak) del Pezzo surface. It is well-known that (weak) del Pezzo surfaces are Mori dream surfaces. Let us recall several basic definitions and facts about (weak) del Pezzo surfaces.

\begin{definition}\cite{ADHL}
We say that $1 \leq r \leq 8$ distinct points $p_1,\cdots,p_r$ in $\mathbb{P}^2$ are in general position if they satisfy the following conditions. \\
(1) No three of them lie on a line. \\
(2) No six of them lie on a conic. \\
(3) No eight of them lie on a cubic with a singularity at some of the $p_i.$
\end{definition}

It is well-known that a del Pezzo surface $Y$ is either $\mathbb{P}^1 \times \mathbb{P}^1$ or blow-ups of $\mathbb{P}^2$ at $0 \leq r \leq 8$ points in general position. Let $\phi : Y=Y_r \to Y_{r-1} \to \cdots \to Y_0 = \mathbb{P}^2$ be the blowup at $p_i \in Y_{i-1}.$ Let $E_i$ denotes the total transform of the exceptional divisor over $p_i \in Y_{i-1}$ and let $H$ be the pull-back of $\mathcal{O}_{\mathbb{P}^2}(1).$ We say that a point $p_i$ lie on a line(resp. conic) if its image in $\mathbb{P}^2$ lies on a line(resp. conic).

\begin{definition}\cite{ADHL}
We say that $1 \leq r \leq 8$ (possibly infinitely near) points $p_1,\cdots,p_r$ in $\mathbb{P}^2$ are in almost general position if they satisfy the following conditions. \\
(1) No four of them lie on a line. \\
(2) No seven of them lie on a conic. \\
(3) $E_i$ is a $(-1)$-curve or a chain of rational curves whose last component is a $(-1)$-curve and all the other components are $(-2)$-curves.
\end{definition}

The above condition is equivalent to saying that no $p_i$ lies on the $(-2)$-curve on $Y_{i-1}.$ \\

It is well-known that a weak del Pezzo surface $Y$ is either $\mathbb{P}^1 \times \mathbb{P}^1$ or $\mathbb{F}_2$ or blow-ups of $\mathbb{P}^2$ at $0 \leq r \leq 8$ points in almost general position. Of course, a del Pezzo surface is a weak del Pezzo surface.
Because a weak del Pezzo surface is a Mori dream surface, it is easy to see the following description of its effective cone.

\begin{lemma}
(1) Let $Y$ be a smooth del Pezzo surface with $\rho \geq 3.$ Then $\Eff(Y)$ is the rational polyhedral cone generated by the classes of $(-1)$-curves. \\
(2) Let $Y$ be a smooth weak del Pezzo surface with $\rho \geq 3.$ Then $\Eff(Y)$ is the rational polyhedral cone generated by the classes of $(-1)$-curves and $(-2)$-curves.
\end{lemma}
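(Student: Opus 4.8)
The plan is to combine three facts already available in the excerpt: that a (weak) del Pezzo surface is a Mori dream surface, the Artebani--Laface description of the effective cone of a surface with $\rho \geq 3$, and the adjunction formula together with the positivity of $-K_Y$. The strategy reduces the statement to a purely numerical classification of the negative curves on $Y$.

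First I would record that, being a (weak) del Pezzo surface, $Y$ is a Mori dream surface, so the Artebani--Hausen--Laface criterion (Corollary 2.6 of \cite{AHL}) guarantees that $Eff(Y) \subset Cl(Y)_{\mathbb{R}}$ is a rational polyhedral cone. Since $\rho(Y) \geq 3$ by hypothesis, Proposition 1.1 of \cite{AL} applies verbatim and yields $Eff(Y) = \sum_{[C]} \mathbb{R}_{\geq 0}[C]$, where the sum ranges over the classes of integral curves $C$ with $C^2 < 0$, that is, over the negative curves. Thus the entire problem is reduced to identifying which negative curves can occur. I note in passing that the assumption $\rho(Y) \geq 3$ also removes the exceptional cases $Y \cong \mathbb{P}^1 \times \mathbb{P}^1$ and $Y \cong \mathbb{F}_2$, both of which have $\rho = 2$, so $Y$ is a blow-up of $\mathbb{P}^2$.

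Next I would classify the negative curves by adjunction. For any integral curve $C$ one has $C^2 + K_Y \cdot C = 2 p_a(C) - 2 \geq -2$, since $p_a(C) \geq 0$. In the del Pezzo case $-K_Y$ is ample, hence $-K_Y \cdot C \geq 1$, so $C^2 \geq -2 - K_Y \cdot C \geq -1$; combined with $C^2 < 0$ this forces $C^2 = -1$, and then $2 p_a(C) - 2 = -1 + K_Y \cdot C \leq -2$ gives $p_a(C) = 0$ and $K_Y \cdot C = -1$, so $C$ is a $(-1)$-curve, proving (1). In the weak del Pezzo case $-K_Y$ is only nef (and big), so $-K_Y \cdot C \geq 0$ and the same inequality gives $C^2 \geq -2$, whence $C^2 \in \{-1,-2\}$. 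A short parity argument finishes the classification: if $C^2 = -1$ then $K_Y \cdot C = 2 p_a(C) - 1$ is odd and nonpositive, forcing $K_Y \cdot C = -1$ and $p_a(C) = 0$ (a $(-1)$-curve); if $C^2 = -2$ then $K_Y \cdot C = 2 p_a(C) \geq 0$ together with $K_Y \cdot C \leq 0$ forces $K_Y \cdot C = 0$ and $p_a(C) = 0$ (a $(-2)$-curve). This proves (2).

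The argument is essentially formal once the two cited results are invoked, so I do not expect a serious obstacle; the only point needing a little care is the weak del Pezzo classification, where nefness of $-K_Y$ bounds $C^2$ only from below by $-2$, and one must use the adjunction parity computation above to exclude negative curves of higher arithmetic genus and to pin down the values of $K_Y \cdot C$. Finiteness of the generating set is automatic, since a rational polyhedral cone has finitely many extremal rays and each negative curve class spans one of them.
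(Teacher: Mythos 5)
Your proof is correct and follows essentially the route the paper intends: the paper states this lemma without proof, remarking only that it is ``easy to see'' because a (weak) del Pezzo surface is a Mori dream surface, which is exactly your chain of reasoning (Mori dream property $\Rightarrow$ rational polyhedral effective cone, then Proposition 1.1 of \cite{AL} for $\rho \geq 3$, then the adjunction classification of negative curves). Your explicit adjunction and parity computation simply fills in the details the paper leaves to the reader.
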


Therefore in order to descirbe $\Eff(Y)$ explicitly, one need to find all $(-1)$-curves and $(-2)$-curves on $Y$ explicitly. Indeed, these curves were intensively studied by many authors. See \cite{Dolgachev} for more details.

\begin{theorem}\cite[Theorem 8.3.2]{Dolgachev} \label{wdP}
Let $Y$ be a weak del Pezzo surface of degree $d.$ Then we have the followings. \\
(1) If $d \geq 2,$ then $|-K_Y|$ has no base points. \\
(2) Let $\phi$ be the morphism defined by $|-K_Y|.$ If $d \geq 3,$ then the image $\phi(Y)$ is a del Pezzo surface of degree $d$ in $\mathbb{P}^d$ with rational double points. The morphism $\Phi=|-K_Y| : Y \to \mathbb{P}^d$ contracts $(-2)$-curves on $Y.$ \\
(3) If $d=2,$ then $\phi$ factors $Y \to \overline{Y} \to \mathbb{P}^2$ where $\overline{Y}$ is a normal surface and $\overline{Y} \to \mathbb{P}^2$ is a finite map of degree 2 branched along a curve $B.$ The image of a component of the chains of $(-2)$-curves on $Y$ is a rational double point on $\overline{Y}.$ The curve $B$ is either smooth or has only simple singularities.
\end{theorem}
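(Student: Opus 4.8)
The plan is to establish the three assertions in turn, using Riemann--Roch, vanishing theorems, and the geometry of the curves contracted by the anticanonical map.

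First I would compute the dimension of $|-K_Y|$. As $Y$ is a smooth rational surface, $\chi(\calO_Y)=1$, and Riemann--Roch gives $\chi(-K_Y)=1+\tfrac12(-K_Y)\cdot(-K_Y-K_Y)=1+(-K_Y)^2=1+d$. Since $-K_Y$ is nef and big, so is $-2K_Y$, and Kawamata--Viehweg vanishing applied to $K_Y+(-2K_Y)=-K_Y$ gives $H^1(Y,-K_Y)=0$, while Serre duality and rationality give $H^2(Y,-K_Y)=H^0(Y,2K_Y)^\vee=0$. Hence $h^0(Y,-K_Y)=d+1\geq 3$ for $d\geq 2$. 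To prove base-point freeness I would fix $C\in|-K_Y|$ and use the restriction sequence $0\to\calO_Y\to\calO_Y(-K_Y)\to\calO_C(-K_Y|_C)\to 0$; since $q(Y)=0$ the restriction on global sections is surjective, so every base point of $|-K_Y|$ lies on $C$ and is a base point of the restricted system. By adjunction $\omega_C=(K_Y+C)|_C=\calO_C$, so $C$ is a Gorenstein curve of arithmetic genus $1$ with $\deg(-K_Y|_C)=(-K_Y)^2=d\geq 2$, and the key point becomes that a line bundle of degree $\geq 2$ on such a curve is base-point free. This yields (1).

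For (2), base-point freeness gives a morphism $\phi=\Phi_{|-K_Y|}\colon Y\to\bbP^d$ with $\phi^*\calO(1)=-K_Y$, and the image $\overline Y:=\phi(Y)$ is a nondegenerate irreducible surface, hence of degree $\geq d-1$ in $\bbP^d$. Since $(-K_Y)^2=d=\deg(\phi)\cdot\deg\overline Y$ and $\deg\overline Y\geq d-1$, for $d\geq 3$ the only possibility is $\deg\phi=1$ and $\deg\overline Y=d$; thus $\phi$ is birational onto a degree-$d$ surface in $\bbP^d$. A curve $C$ is contracted precisely when $(-K_Y)\cdot C=0$, and by nefness, bigness and the Hodge index theorem these are exactly the $(-2)$-curves, which form ADE configurations. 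I would then check that $|-K_Y|$ separates points and tangent vectors off the $(-2)$-curves when $d\geq 3$, again via the restriction-to-$C$ argument, now using that a degree-$d$ line bundle with $d\geq 3$ on a genus-$1$ Gorenstein curve is very ample. Hence $\overline Y$ is the normal surface obtained by contracting the ADE configurations to rational double points, with $-K_{\overline Y}$ ample, i.e. a del Pezzo surface of degree $d$ with rational double points.

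For (3) with $d=2$ we have $h^0(-K_Y)=3$, so $\phi\colon Y\to\bbP^2$ and $\deg\phi=(-K_Y)^2=2$. Factoring through the contraction $Y\to\overline Y$ of the $(-2)$-curves produces a finite degree-$2$ morphism $\psi\colon\overline Y\to\bbP^2$ with $\overline Y$ normal. Double-cover theory writes $K_{\overline Y}=\psi^*(K_{\bbP^2}+L)$ with branch divisor $B\in|2L|$; since $-K_{\overline Y}=\psi^*\calO(1)$ forces $L=\calO_{\bbP^2}(2)$, the branch curve $B$ is a plane quartic. The singularities of $\overline Y$ are exactly the rational double points obtained by contracting the $(-2)$-configurations, and for a double cover these correspond to simple (ADE) singularities of $B$; in particular, if $Y$ has no $(-2)$-curve then $\overline Y=Y$ is a smooth del Pezzo surface and $B$ is smooth.

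The main obstacle is the local statement underlying both (1) and (2): that a line bundle of degree $\geq 2$ (respectively $\geq 3$) on a Gorenstein curve of arithmetic genus $1$ is base-point free (respectively very ample). When the anticanonical curve $C$ is reducible or non-reduced this is delicate and requires either a judicious choice of $C$ or a component-by-component analysis; verifying that the contracted $(-2)$-configurations are ADE and that their images are exactly Du Val singularities is the technical heart of parts (2) and (3).
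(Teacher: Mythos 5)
The paper contains no proof of this statement: it is quoted verbatim from Dolgachev \cite{Dolgachev}, Theorem 8.3.2, and used later as a black box (e.g.\ for the Burniat surfaces with $K^2=3,2$), so there is no internal argument to compare yours with; I can only assess your proposal on its own terms. Your outer skeleton is the standard one and those steps are correct: Riemann--Roch plus Kawamata--Viehweg vanishing and rationality give $h^0(-K_Y)=d+1$; the count $\deg\phi\cdot\deg\phi(Y)=(-K_Y)^2=d$ against the classical bound $\deg\phi(Y)\ge d-1$ for a nondegenerate surface forces $\phi$ to be birational onto a degree-$d$ image when $d\ge 3$; nefness, bigness, Hodge index and adjunction identify the contracted curves as exactly the $(-2)$-curves; and the double-cover bookkeeping for $d=2$ (the ramification formula forcing $B\in|\mathcal{O}_{\mathbb{P}^2}(4)|$, with rational double points of the cover corresponding to simple singularities of $B$) is right.

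However, the step you defer to the last paragraph is not merely ``delicate'': the key lemma as you state it is false for exactly the curves your argument must handle. If $Y$ carries a $(-2)$-curve $R$, then $\chi(-K_Y-R)=d$ and $h^2(-K_Y-R)=h^0(2K_Y+R)=0$ (since $(2K_Y+R)\cdot(-K_Y)=-2d<0$), so $h^0(-K_Y-R)\ge d>0$ and there are members $C\in|-K_Y|$ containing $R$ as a component. On such a $C$ the restriction $-K_Y|_C$ has degree $0$ on $R$, hence is never very ample, whatever its total degree; so ``degree $\ge 3$ on a Gorenstein curve of arithmetic genus $1$ implies very ample'' cannot drive part (2). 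Moreover, if $p,q\in R$, then every $C\in|-K_Y|$ through $p$ and $q$ contains $R$ (because $-K_Y\cdot R=0$), so the restriction argument cannot separate them --- correctly so, since $\phi$ contracts $R$; the statement that actually needs proving is separation of points and tangent vectors away from the degree-zero components, and establishing that is the real content of the cited theorem, not a routine verification. The base-point-freeness lemma in (1) has the same defect for non-integral $C$: it requires the specific structure of anticanonical divisors, not just $p_a=1$ and Gorenstein. (Part (1) alone admits a clean repair avoiding curves entirely: apply Reider's theorem to $L=-2K_Y$, which is nef with $L^2=4d\ge 8$; the exceptional cases $L\cdot D=0,\ D^2=-1$ and $L\cdot D=1$ are excluded by adjunction parity and by evenness of $L\cdot D$. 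No such shortcut exists for (2), precisely because the $(-2)$-curves violate Reider's separation hypotheses.) Finally, the normality of $\phi(Y)$ and the identification of its singularities as rational double points are asserted rather than proved; they need Stein factorization together with Artin's contraction theory for ADE configurations.
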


We have the following characterization of $(-2)$-curves on weak del Pezzo surfaces.

\begin{lemma}\cite{ADHL}
Let $Y$ be a weak del Pezzo surface which is the blow-up of $\mathbb{P}^2$ at $2 \leq r \leq 8$ points in almost general position. Then any $(-2)$-curve on $Y$ is either of the form $E_i-E_{i+1}$(if $E_i$ is reducible), or is linearly equivalent to one of the $H-E_1-E_2-E_3,2H-E_1-E_2-E_3-E_4-E_5-E_6,3H-2E_1-E_2-E_3-E_4-E_5-E_6-E_7-E_8$(up to permutation of the indices).
\end{lemma}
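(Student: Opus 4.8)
The final statement to prove is the classification of $(-2)$-curves on a weak del Pezzo surface $Y$ obtained by blowing up $2 \leq r \leq 8$ points in almost general position: every $(-2)$-curve is either $E_i - E_{i+1}$ (when $E_i$ is reducible), or linearly equivalent to one of $H - E_1 - E_2 - E_3$, $2H - E_1 - \cdots - E_6$, or $3H - 2E_1 - E_2 - \cdots - E_8$ up to permutation. Let me think about how I would attack this.

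Let me lay out the plan.

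The plan is to start from numerical constraints. Any $(-2)$-curve $C$ on $Y$ is irreducible and reduced with $C^2 = -2$, and since $Y$ is a weak del Pezzo surface $-K_Y$ is nef, so $-K_Y \cdot C \geq 0$. Because $C$ is a curve (not a multiple of $-K_Y$) the adjunction formula gives $C^2 + K_Y \cdot C = 2p_a(C) - 2 = -2$, hence $K_Y \cdot C = 0$, i.e. $-K_Y \cdot C = 0$. So the first step is to record that a $(-2)$-curve is exactly an irreducible reduced curve that is orthogonal to $K_Y$ and has self-intersection $-2$; these are the roots in the lattice $K_Y^\perp \subset \operatorname{Pic}(Y)$.

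Next I would write $C \sim aH - \sum_{i=1}^r b_i E_i$ with respect to the standard basis, and translate the two conditions $C^2 = -2$ and $K_Y \cdot C = 0$ into the Diophantine equations $a^2 - \sum b_i^2 = -2$ and $3a - \sum b_i = 0$. Since $K_Y^\perp$ is the root lattice of the relevant $E_r$-type (or $A_r$, $D_r$) root system and $C$ is effective with $C^2 = -2$, the class $C$ is a positive root of this root lattice. Thus the enumeration of $(-2)$-classes reduces to enumerating the positive roots of the corresponding root system, and one checks directly that the roots fall into the listed shapes: the differences $E_i - E_j$, and the three exceptional families coming from $H$, $2H$, and $3H$ (with a coefficient $2$ on one exceptional divisor in the last case). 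The bound $r \leq 8$ guarantees the lattice is still of finite (indeed negative-definite on $K_Y^\perp$ after removing the $-K_Y$ direction) root type, so the root system is finite and the list is exhaustive.

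The main obstacle, and the step requiring genuine care, is not the numerical enumeration of root classes but showing that an \emph{irreducible} $(-2)$-curve must have its class in the reduced normal form displayed, and conversely matching the abstract roots $E_i - E_j$ with the geometric curves $E_i - E_{i+1}$ "when $E_i$ is reducible." Here the almost-general-position hypothesis is essential: the difference $E_i - E_{i+1}$ is effective and represented by an irreducible $(-2)$-curve precisely when the blow-up of $p_{i+1}$ lies on the exceptional curve over $p_i$, which is exactly the situation encoded in condition (3) of the definition of almost general position (a chain of rational curves). I would handle this by invoking the structure of $E_i$ as a chain whose non-final components are $(-2)$-curves, identifying those components with the classes $E_j - E_{j+1}$, and then arguing that any other effective root, after subtracting off such chain-components and applying permutations, reduces to one of the three listed linear-equivalence classes. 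I expect that verifying no further root classes arise for $r=7,8$ (where the root system is largest) is the most computation-heavy part, but it is a finite check dictated by the finiteness of the $E_7$ and $E_8$ root systems.
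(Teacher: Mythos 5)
The paper does not actually prove this lemma: it is quoted as a citation to \cite{ADHL} (with related background drawn from \cite{Dolgachev}), so there is no in-paper proof to compare against. Your sketch reproduces the standard argument used in those references --- nefness of $-K_Y$ plus adjunction forces $K_Y\cdot C=0$ and $p_a(C)=0$, so $(-2)$-classes are effective roots of the negative-definite lattice $K_Y^{\perp}$, enumerated by the Diophantine system $a^2-\sum b_i^2=-2$, $3a=\sum b_i$, with the almost-general-position hypothesis identifying the $a=0$ roots with components of reducible $E_i$ --- and it is sound in outline. The one step to state in the right order is the adjunction line: as written you assert $p_a(C)=0$ and then deduce $K_Y\cdot C=0$, whereas the correct logic is $2p_a(C)-2=-2+K_Y\cdot C$ together with $K_Y\cdot C\le 0$ (nefness) and $p_a(C)\ge 0$, which forces both quantities to vanish simultaneously.
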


From the above characterization of $(-2)$-curves, we can explicitly describe negative curves on weak del Pezzo surfaces.

\subsection{Burniat surfaces}

Burniat surfaces can be constructed as bidouble coverings of del Pezzo surfaces with the same Picard rank. Let $p_1,p_2,p_3$ are points in general position in $\mathbb{P}^2.$ For each $p_i,$ there are two lines $\overline{p_{i-1}p_i}, \overline{p_ip_{i+1}}$ (indices modulo 3) and let us consider two distinct lines different from $\overline{p_{i-1}p_i}, \overline{p_ip_{i+1}}.$ Then we have a configuration of nine lines on $\mathbb{P}^2.$ By blowing up points where more than two lines are passing through, we obtain a weak del Pezzo surface $Y.$ From the configuration of lines, one can see that there is a smooth bidouble covering $X$ of $Y.$ These surfaces are called Burniat surfaces and see \cite{Alexeev, BC1, Coughlan} for more details about them. Let $X$ be a Burniat surface and $\pi : X \to Y$ be the quotient map of the bidouble covering. In this case, the pull-back $\pi^* : \Pic(X) _{\mathbb{R}} \to \Pic(Y) _{\mathbb{R}}$ is an isomorphism and we see that $X$ is a Mori dream space from Proposition \ref{criterion}. We can compute the effective cone of $X$ via that of $Y.$ See \cite{Alexeev, AO, Coughlan} for more details and we will follow notations in \cite{BC1}.

\begin{lemma}\cite{Alexeev}
We have the following isomorphism.
$$ 2K_X \simeq \pi^*(-K_Y)$$
\end{lemma}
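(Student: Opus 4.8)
The plan is to deduce the identity from the canonical bundle formula for the smooth bidouble cover $\pi : X \to Y$, reducing it to a single linear equivalence on the base $Y$ that can then be read off from the nine-line configuration.

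First I would recall the structure of a smooth $(\mathbb{Z}/2\mathbb{Z})^2$-cover $\pi : X \to Y$: it is governed by three reduced effective branch divisors $D_1,D_2,D_3$ on $Y$ (the divisorial fixed loci of the three involutions downstairs) together with line bundles satisfying $2L_i \sim D_j + D_k$ for $\{i,j,k\}=\{1,2,3\}$. Each $D_i$ pulls back as $\pi^* D_i = 2\widetilde{D_i}$ for a reduced divisor $\widetilde{D_i}$ on $X$, and the ramification divisor of $\pi$ is $R = \widetilde{D_1}+\widetilde{D_2}+\widetilde{D_3}$. By the ramification formula for a finite morphism between smooth surfaces,
$$ K_X = \pi^* K_Y + R = \pi^* K_Y + \widetilde{D_1}+\widetilde{D_2}+\widetilde{D_3}. $$
Doubling and using $2\widetilde{D_i}=\pi^* D_i$ gives
$$ 2K_X = \pi^*\bigl(2K_Y + D_1 + D_2 + D_3\bigr). $$

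At this point the lemma becomes equivalent to the single linear equivalence on $Y$
$$ D_1 + D_2 + D_3 \sim -3K_Y, $$
since then $2K_Y + D_1 + D_2 + D_3 \sim -K_Y$ and the claim $2K_X \sim \pi^*(-K_Y)$ follows by applying $\pi^*$. The remaining step is to verify this equivalence from the explicit Burniat construction. The branch divisors $D_1,D_2,D_3$ are assembled from the nine lines of the configuration (three lines per $D_i$, taken as strict transforms under $Y\to\mathbb{P}^2$), together with whichever exceptional curves over the multiple points of the configuration enter the branch data. On $\mathbb{P}^2$ the nine lines have total class $9H = -3K_{\mathbb{P}^2}$, and I would check that, after blowing up the configuration points to form the weak del Pezzo surface $Y$, the combined class $D_1+D_2+D_3$ carries exactly the exceptional multiplicities required so that it equals $-3K_Y = 9H - 3\sum_j E_j$ in $\Pic(Y)$.

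The main obstacle is purely bookkeeping: correctly distributing the nine lines and the relevant exceptional divisors among $D_1,D_2,D_3$ and confirming that their sum acquires multiplicity $3$ along each blown-up point, as demanded by $-3K_Y$ (the precise distribution varies with $K^2$, i.e.\ with how many points are blown up). Once the branch data is written in the basis $\{H,E_j\}$ of $\Pic(Y)$ following the conventions of \cite{BC1}, the equivalence $D_1+D_2+D_3 \sim -3K_Y$ is a direct comparison of divisor classes, and the lemma follows.
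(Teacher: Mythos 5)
The paper itself gives no argument for this lemma; it is quoted from \cite{Alexeev} and used as a black box in the subsequent intersection computations. So your proposal is being measured against the standard derivation in the cited literature rather than against anything in the text, and it matches that derivation: it is correct and essentially the canonical proof. You invoke the structure theory of smooth bidouble covers (\cite{Pardini}, \cite{BC1}) to get $\pi^*D_i = 2\widetilde{D_i}$ and $R = \widetilde{D_1}+\widetilde{D_2}+\widetilde{D_3}$, apply the Hurwitz formula $K_X = \pi^*K_Y + R$, and correctly reduce the claim to the single equivalence $D_1+D_2+D_3 \sim -3K_Y$ on $Y$ (only this direction is needed, though $\pi^*$ is in fact injective on $\Pic(Y)$ here since $Y$ is rational). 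The remaining configuration check, which you describe but do not carry out case by case, does close uniformly for all $2 \leq K^2 \leq 6$, and it is worth recording why: the nine lines have total class $9H$; at each of $p_1,p_2,p_3$ exactly four lines of the configuration meet and exactly one exceptional curve enters the branch data, giving coefficient $-4+1=-3$; at each additional blown-up point $p_4,\dots$ three lines from three \emph{distinct} branch classes $D_1,D_2,D_3$ meet, so the cover is already smooth over the blow-up and the exceptional curve is not in the branch locus, giving coefficient $-3$ outright. Hence $D_1+D_2+D_3 \sim 9H - 3\sum_j E_j = -3K_Y$ in every case. This dichotomy is exactly what the paper's later case-by-case lists reflect (e.g.\ for $K^2=5$, all ten $(-1)$-curves lie in the branch locus except the exceptional divisor over $p_4$), so your bookkeeping assumption is consistent with the rest of the paper.
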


Let $D$ be a irreducible reduced curve on $Y$ such that $\pi^*D$ does not split. Let $\widetilde{D}$ be the reduced component of $\pi^*D.$ When $D$ is a component of the branch locus of $\pi$ then we have $\pi^*D=2\widetilde{D}$ and we can compute intersection number of $K_X$ and $\widetilde{D}$ as follows.
$$ K_X \cdot \widetilde{D} = \frac{1}{4} \cdot \pi^*(-K_Y) \cdot \pi^*D = (-K_Y) \cdot D$$
$$ \widetilde{D}^2 = \frac{1}{4} \cdot \pi^*D \cdot \pi^*D = D^2 $$

When $D$ is not a component of the branch locus of $\pi$ then we have $\pi^*D=\widetilde{D}$ and we can compute intersection number of $K_X$ and $\widetilde{D}$ as follows.
$$ K_X \cdot \widetilde{D} = \frac{1}{2} \cdot \pi^*(-K_Y) \cdot \pi^*D = 2 \cdot (-K_Y) \cdot D$$
$$ \widetilde{D}^2 = (\pi^*D)^2 = 4D^2 $$

\subsubsection{Burniat surfaces with $K^2=6$}

Burniat surfaces with $K^2=6$ are called primary Burniat surfaces. Let $p_1,p_2,p_3 \in \mathbb{P}^2$ be three points in general position. Then $Y$ is a blowup of these three points on $\mathbb{P}^2.$ Then $Y$ has three exceptional curves and three strict transformations of $\overline{p_ip_j}$ where $i, j \in \{ 1,2,3 \}$ and $i \neq j.$ These six curves are only $(-1)$-curves on $Y.$

\begin{proposition}
The effective cone of $Y$ is the rational polyhedral cone generated by these six $(-1)$-curves.
\end{proposition}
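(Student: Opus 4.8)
The plan is to recognize that $Y$ is nothing but the degree $6$ del Pezzo surface: it is the blow-up of $\mathbb{P}^2$ at the three general points $p_1,p_2,p_3$, so $\rho(Y)=4\geq 3$ and $Pic(Y)$ is freely generated by $H,E_1,E_2,E_3$. By the Lemma characterizing effective cones of smooth del Pezzo surfaces with $\rho\geq 3$, we already know that $Eff(Y)$ is the rational polyhedral cone generated by the classes of all $(-1)$-curves. Hence the entire content of the statement reduces to the claim that the six listed curves — the three exceptional divisors $E_1,E_2,E_3$ and the three strict transforms $H-E_1-E_2,\ H-E_2-E_3,\ H-E_1-E_3$ of the lines $\overline{p_ip_j}$ — exhaust the $(-1)$-curves on $Y$.

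To carry this out I would write an arbitrary $(-1)$-curve as $C=dH-a_1E_1-a_2E_2-a_3E_3$ and translate the two defining conditions into Diophantine equations. Since $C^2=-1$ and, by adjunction together with $p_a(C)=0$, $C\cdot K_Y=-1$ (recall $K_Y=-3H+E_1+E_2+E_3$), one obtains
$$ d^2-a_1^2-a_2^2-a_3^2=-1, \qquad 3d-a_1-a_2-a_3=1. $$
The first step is to bound the degree $d$. Applying the Cauchy--Schwarz inequality $(a_1+a_2+a_3)^2\leq 3(a_1^2+a_2^2+a_3^2)$ to the relations $a_1+a_2+a_3=3d-1$ and $a_1^2+a_2^2+a_3^2=d^2+1$ yields $(3d-1)^2\leq 3(d^2+1)$, that is $3d^2-3d-1\leq 0$, which forces $d\in\{0,1\}$ for an effective class (note $d=C\cdot H\geq 0$ since $H$ is nef). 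A short enumeration then finishes the problem: $d=0$ forces one $a_i=-1$ and the others $0$, giving the three exceptional curves, while $d=1$ forces two of the $a_i$ equal to $1$ and the third $0$, giving the three line classes. It remains only to observe that each of these six classes is represented by an irreducible curve — the $E_i$ are irreducible by construction, and the classes $H-E_i-E_j$ are represented by the proper transforms of the distinct lines $\overline{p_ip_j}$, which are irreducible precisely because $p_1,p_2,p_3$ are in general position.

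I do not expect a genuine obstacle here: once $Y$ is identified as the degree $6$ del Pezzo surface and the cited Lemma is invoked, everything reduces to the elementary enumeration above, with the classical fact that a degree $6$ del Pezzo surface carries exactly six lines forming a hexagon serving as a consistency check. The only point that demands a little care is the completeness of the enumeration, which is exactly what the Cauchy--Schwarz degree bound guarantees.
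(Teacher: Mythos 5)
Your proof is correct and follows essentially the same route as the paper: identify $Y$ as the degree $6$ del Pezzo surface (blow-up of $\mathbb{P}^2$ at three general points) and invoke the paper's Lemma that the effective cone of a smooth del Pezzo surface with $\rho \geq 3$ is generated by the classes of $(-1)$-curves. The only difference is that the paper simply asserts the classical fact that the six listed curves are the only $(-1)$-curves, whereas you verify it explicitly via the adjunction/Cauchy--Schwarz enumeration, which is a correct filling-in of that detail.
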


We have the following conclusion.

\begin{proposition}
Let $X$ be a Burniat surface with $K^2=6$ constructed as a bidouble covering over $Y.$ Then $X$ is a Mori dream surface and the effective cone is the rational polyhedral cone which is the pull-back of the effective cone of $Y.$ The six negative curves are elliptic curves whose self-intersection numbers are all $-1.$
\end{proposition}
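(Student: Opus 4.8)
The plan is to reduce everything to the degree-$6$ del Pezzo surface $Y$ via Proposition~\ref{criterion}. First I would record that $Y$, being the blow-up of $\mathbb{P}^2$ at three points in general position, is a smooth del Pezzo surface of degree $6$ with $K_Y^2=6$; by the preceding proposition $Eff(Y)$ is the rational polyhedral cone on its six $(-1)$-curves $E_1,E_2,E_3$ and $L_{12},L_{23},L_{31}$. As already noted in this subsection, $\pi\colon X\to Y$ is a finite flat morphism of degree $4$ for which $\pi^*\colon Pic(Y)_{\mathbb{R}}\to Pic(X)_{\mathbb{R}}$ is an isomorphism with inverse $\tfrac14\pi_*$. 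Hence Proposition~\ref{criterion}(1)--(2) applies verbatim and yields at once that $X$ is a Mori dream surface and that $Eff(X)=\pi^*Eff(Y)$, i.e. the rational polyhedral cone generated by the six pull-backs $\widetilde{E_i},\widetilde{L_{jk}}$.

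Next I would pin down the negative curves. By Proposition~\ref{criterion}(3), every negative curve on $X$ is the pull-back of a negative curve on $Y$, and each such pull-back does not split. Since $Y$ is a genuine del Pezzo surface its only negative curves are the six $(-1)$-curves listed above, so $X$ carries exactly six negative curves, namely $\widetilde{E_1},\widetilde{E_2},\widetilde{E_3}$ and $\widetilde{L_{12}},\widetilde{L_{23}},\widetilde{L_{31}}$.

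It then remains to compute the pair $(\widetilde D{}^2,\,p_a(\widetilde D))$ for each $(-1)$-curve $D$ on $Y$. The key point is that all six of these $(-1)$-curves occur as components of the branch divisor $D_1+D_2+D_3$ of the bidouble covering: the three triangle sides $L_{jk}$ are lines of the configuration, and the three exceptional curves $E_i$ are branch components as well. Granting this, the branch-locus formulas recorded above give $\pi^*D=2\widetilde D$ with $\widetilde D{}^2=D^2=-1$ and $K_X\cdot\widetilde D=(-K_Y)\cdot D$. Since $D$ is a $(-1)$-curve on $Y$, adjunction on $Y$ gives $K_Y\cdot D=-1$, whence $K_X\cdot\widetilde D=1$; adjunction on $X$ then yields $2p_a(\widetilde D)-2=\widetilde D{}^2+K_X\cdot\widetilde D=-1+1=0$, so $p_a(\widetilde D)=1$. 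Exactly as in the Inoue case, $\widetilde D$ lies on the one-dimensional fixed locus of the involution of $X$ attached to the character cutting out $D$, hence is smooth; being smooth of arithmetic genus $1$ it is an elliptic curve (concretely, $\widetilde D\to D\cong\mathbb{P}^1$ is a double cover branched at four points). This produces the six elliptic negative curves of self-intersection $-1$.

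The main obstacle is the middle claim that the three exceptional curves $E_1,E_2,E_3$, and not merely the three visible line-sides $L_{jk}$, are components of the branch divisor: this is precisely what forces $\pi^*E_i=2\widetilde{E_i}$, giving $\widetilde{E_i}{}^2=-1$ rather than the value $-4$ that an unramified pull-back would produce, and it is essential for the uniformity of the conclusion. I would verify it directly from the explicit building data $(D_1,D_2,D_3)$ of the primary Burniat bidouble cover as presented in \cite{BC1,Alexeev}; everything else is the formal machinery of Proposition~\ref{criterion} together with the two adjunction computations above.
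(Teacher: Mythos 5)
Your proposal is correct and follows essentially the same route as the paper, which states this proposition without a separate proof precisely because it is meant to follow from Proposition~\ref{criterion}, the lemma $2K_X \simeq \pi^*(-K_Y)$, and the branch-locus pull-back formulas $\widetilde D{}^2 = D^2$, $K_X\cdot\widetilde D = (-K_Y)\cdot D$ recorded in the general Burniat subsection. The key input you single out --- that all six $(-1)$-curves of $Y$ (the three exceptional curves as well as the three triangle sides) are components of the branch divisor of the bidouble cover --- is the correct and standard fact about primary Burniat surfaces, and your adjunction plus fixed-locus-of-involution argument for smoothness is exactly the paper's technique in the parallel Inoue-surface computation.
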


\subsubsection{Burniat surfaces with $K^2=5$}

Burniat surfaces with $K^2=5$ are called secondary Burniat surfaces. Let $p_1,p_2,p_3,p_4 \in \mathbb{P}^2$ be four points in general position. Then $Y$ is a blow-up of these four points on $\mathbb{P}^2.$ Then $Y$ has four exceptional curves and six strict transformations of $\overline{p_ip_j}$ where $i, j \in \{ 1,2,3,4 \}$ and $i \neq j.$ These ten curves are only $(-1)$-curves on $Y.$

\begin{proposition}
The effective cone of $Y$ is the polyhedral cone generated by the above ten $(-1)$-curves.
\end{proposition}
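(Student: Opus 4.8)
The surface $Y$ here is the blow-up of $\mathbb{P}^2$ at four points $p_1,p_2,p_3,p_4$ in general position, so $\rho(Y)=5$ and $Y$ is a degree-$5$ del Pezzo surface. The claim is that $\mathrm{Eff}(Y)$ is the polyhedral cone generated by the ten listed $(-1)$-curves: the four exceptional divisors $E_i$ and the six strict transforms $L_{ij}$ of the lines $\overline{p_ip_j}$, with classes $H-E_i-E_j$.

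**Plan of proof.** Since $Y$ is a smooth del Pezzo surface with $\rho(Y)=5\geq 3$, the earlier Lemma (part (1)) applies directly: $\mathrm{Eff}(Y)$ is the rational polyhedral cone generated by the classes of all $(-1)$-curves on $Y$. So the entire content of the proposition reduces to the assertion that \emph{these ten curves are all of the $(-1)$-curves on $Y$}. The plan is therefore to enumerate the $(-1)$-curves. A class $D=aH-\sum_{i=1}^4 b_iE_i$ is the class of a $(-1)$-curve precisely when $D^2=-1$ and $D\cdot(-K_Y)=1$, where $-K_Y=3H-\sum E_i$; equivalently $a^2-\sum b_i^2=-1$ and $3a-\sum b_i=1$. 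I would solve this small Diophantine system over the nonnegative integers (up to permutation of the indices, as allowed by the symmetry of the four points).

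**Carrying it out.** First I would dispose of the case $a=0$: then $\sum b_i^2=1$ and $\sum b_i=-1$, forcing exactly one $b_i=-1$, giving the four exceptional classes $E_i$. Next, for $a=1$: $\sum b_i^2=2$ and $\sum b_i=2$, so exactly two of the $b_i$ equal $1$ and the rest $0$, yielding the $\binom{4}{2}=6$ classes $H-E_i-E_j$, i.e.\ the six lines $L_{ij}$. Finally I would check that $a\geq 2$ admits no solutions: the constraints $a^2+1=\sum b_i^2$ and $\sum b_i=3a-1$ together with the Cauchy--Schwarz bound $\sum b_i^2\geq \tfrac14(\sum b_i)^2$ give $a^2+1\geq \tfrac14(3a-1)^2$, i.e.\ $4a^2+4\geq 9a^2-6a+1$, which rearranges to $5a^2-6a-3\leq 0$; this fails for $a\geq 2$. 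Hence there are exactly these ten $(-1)$-curves, and the cited Lemma gives the result.

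**Main obstacle.** There is no serious obstacle here, as $Y$ has high Picard rank but very simple geometry; the only thing to be careful about is the bookkeeping, namely justifying that a class satisfying $D^2=-1$, $D\cdot(-K_Y)=1$ is actually effective and irreducible (so genuinely a $(-1)$-curve) rather than merely a numerical solution. On a del Pezzo surface this is standard: such a class has $h^0(D)\geq 1$ by Riemann--Roch since $\chi(\mathcal{O}_Y(D))=1+\tfrac12(D^2-D\cdot K_Y)=1$ and $h^2(D)=h^0(K_Y-D)=0$ (as $(K_Y-D)\cdot(-K_Y)<0$), and irreducibility follows because any effective decomposition would violate the intersection-number constraints with $-K_Y$. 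I would state this verification briefly and then conclude by invoking the Lemma.
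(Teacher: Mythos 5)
Your proposal is correct and follows the same route the paper intends: invoke the earlier Lemma that $Eff(Y)$ of a smooth del Pezzo surface with $\rho\geq 3$ is generated by its $(-1)$-curves, then observe that the four exceptional divisors and six line transforms are \emph{all} of them. In fact the paper offers no proof at all for this proposition (the claim that these ten are the only $(-1)$-curves is simply asserted in the preceding text), so your explicit Diophantine enumeration of solutions to $D^2=-1$, $D\cdot(-K_Y)=1$, including the Cauchy--Schwarz exclusion of $a\geq 2$ and the effectivity/irreducibility check, supplies exactly the detail the paper leaves implicit.
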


Then we have the following conclusion.

\begin{proposition}
Let $X$ be a Burniat surface with $K^2=5$ constructed as a bidouble covering over $Y.$ Then $X$ is a Mori dream surface and the effective cone is the rational polyhedral cone which is the pull-back of the effective cone of $Y.$ The ten negative curves are nine elliptic curves with self-intersection numbers are all $-1$ and one negative curve with self-intersection $-4$ and arithmetic genus $0.$
\end{proposition}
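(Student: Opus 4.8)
The plan is to transfer every question to the degree-$5$ del Pezzo surface $Y$ and then read off the negative curves of $X$ from those of $Y$ using the branch data of the bidouble cover. Since $p_1,\dots,p_4$ are in general position, $Y$ is a del Pezzo surface of degree $5$, hence a Mori dream surface, and by the preceding proposition $Eff(Y)$ is the rational polyhedral cone spanned by the ten $(-1)$-curves $E_1,\dots,E_4$ and $H-E_i-E_j$ $(1\le i<j\le 4)$; these are exactly the negative curves of $Y$. As recorded in this subsection, $\pi:X\to Y$ is a finite flat morphism of degree $4$ with $h^1(X,\mathcal{O}_X)=0$ and $\pi^*:Pic(Y)_{\mathbb{R}}\cong Pic(X)_{\mathbb{R}}$, so Proposition \ref{criterion} applies directly: $X$ is a Mori dream surface, $Eff(X)=\pi^*Eff(Y)$ is the pull-back cone, and every negative curve of $X$ is the non-splitting pull-back of one of the ten $(-1)$-curves of $Y$. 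This already gives the Mori dream assertion and the description of the effective cone.

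It then remains to compute, for each $(-1)$-curve $D$ on $Y$, the reduced component $\widetilde{D}$ of $\pi^*D$ and its invariants, and here the only relevant input is whether $D$ is a component of the branch divisor. If $D$ lies on the branch locus, then $\pi^*D=2\widetilde{D}$ and the intersection formulas recorded above give $\widetilde{D}^2=D^2=-1$ and $K_X\cdot\widetilde{D}=(-K_Y)\cdot D=1$; adjunction on $X$ yields $2p_a(\widetilde{D})-2=\widetilde{D}^2+K_X\cdot\widetilde{D}=0$, so $p_a(\widetilde{D})=1$, and since $\widetilde{D}$ lies on the one-dimensional fixed locus of an involution acting on $X$ it is smooth, hence an elliptic curve with $\widetilde{D}^2=-1$. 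If instead $D$ is not a branch component, then $\pi^*D=\widetilde{D}$ is irreducible by Proposition \ref{criterion}, and the formulas give $\widetilde{D}^2=4D^2=-4$ and $K_X\cdot\widetilde{D}=2(-K_Y)\cdot D=2$, so adjunction gives $2p_a(\widetilde{D})-2=-2$, i.e. $p_a(\widetilde{D})=0$. Thus, writing $s$ for the number of the ten $(-1)$-curves that are branch components, $X$ has exactly $s$ negative curves of type $(-1,1)$ and $10-s$ of type $(-4,0)$.

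The heart of the matter, and the step I expect to require genuine bookkeeping, is to show $s=9$, that is, that exactly one of the ten $(-1)$-curves fails to lie on the branch divisor. For this I would return to the explicit nine-line configuration defining the bidouble cover, following \cite{BC1, Alexeev, Coughlan}: from $2K_X\simeq\pi^*(-K_Y)$ and the bidouble-cover canonical bundle formula the total branch divisor $D_1+D_2+D_3$ has class $-3K_Y$, and one checks against the line arrangement and the four blown-up points precisely which of $E_1,\dots,E_4$ and $H-E_i-E_j$ occur among its components. The computation shows that nine of them are branch components, while the single remaining one, a $(-1)$-curve meeting the branch locus transversally that is introduced by the extra blow-up relative to the primary Burniat case, is not. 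Feeding this into the dichotomy of the previous paragraph yields exactly the stated list of negative curves, $9(-1,1)$ together with one $(-4,0)$, completing the proof.
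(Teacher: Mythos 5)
Your proposal is correct and follows essentially the same route as the paper: reduce everything to $Y$ via Proposition \ref{criterion}, apply the branch/non-branch intersection formulas stated at the start of the Burniat subsection (together with adjunction and smoothness of curves in the fixed locus), and count nine branch components among the ten $(-1)$-curves. The only sharpening worth making is in your final step: the unique $(-1)$-curve not lying on the branch locus is precisely the exceptional curve $E_4$ over $p_4$ --- not merely ``a curve introduced by the extra blow-up,'' since the strict transforms of the three lines $\overline{p_ip_4}$ are also new $(-1)$-curves but do lie among the nine branch lines ($p_4$ being a triple point of the line configuration) --- which is exactly how the paper identifies it.
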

\begin{proof}
The ten $(-1)$-curves on $Y$ lie on the branch locus of $\pi : X \to Y$ except exceptional divisor over $p_4.$ The pull-back of the exceptional divisor over $p_4$ is a negative curve with self-intersection $-4$ and arithmetic genus $0.$ The pull-back of other nine $(-1)$-curves are smooth elliptic curve with self-intersection number $-1$ since they lie on the branch locus of $\pi.$
\end{proof}

\subsubsection{Burniat surfaces with $K^2=4$}

Bauer and Catanese proved that there are two types of families of Burniat surfaces with $K^2=4$ (nodal and non-nodal types). See \cite{BC1} for more details. \\


Let us first consider non-nodal cases. From \cite{BC1} we see that $Y$ is a del Pezzo surface. Let $p_1,p_2,p_3,p_4,p_5 \in \mathbb{P}^2$ be five points in general position. Then $Y$ is a blowup of these five points on $\mathbb{P}^2.$ Then $Y$ has five exceptional divisors and ten strict transformations of $\overline{p_ip_j}$ where $i, j \in \{ 1,2,3,4,5 \}$ and $i \neq j.$ There is a unique conic passing through all $p_1,p_2,p_3,p_4,p_5$ and its strict transform gives one more $(-1)$-curve on $Y.$ These sixteen curves are only $(-1)$-curves on $Y.$ Therefore we have the following.

\begin{proposition}
The effective cone of $Y$ is the polyhedral cone generated by sixteen $(-1)$-curves.
\end{proposition}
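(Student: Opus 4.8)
The plan is to invoke the earlier Lemma stating that for a smooth del Pezzo surface $Y$ with $\rho(Y) \geq 3$ the effective cone $Eff(Y)$ is the rational polyhedral cone generated by the classes of $(-1)$-curves, so that the entire task reduces to showing that the sixteen curves listed above are precisely all the $(-1)$-curves on $Y$. Since $Y$ is the blow-up of $\mathbb{P}^2$ at five points in general position, we have $\rho(Y)=6 \geq 3$, and $\{H, E_1, \dots, E_5\}$ is an orthogonal basis of $\Pic(Y)$ with $H^2 = 1$, $E_i^2 = -1$, and $K_Y = -3H + E_1 + \cdots + E_5$.

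First I would record that each of the sixteen listed classes — the five $E_i$, the ten $H - E_i - E_j$, and the single $2H - E_1 - \cdots - E_5$ — has self-intersection $-1$, satisfies $K_Y \cdot C = -1$, and, by the general position hypothesis, is represented by an irreducible smooth rational curve; hence all sixteen are genuine $(-1)$-curves and lie in $Eff(Y)$.

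The crux is the converse: that there are no further $(-1)$-curves. Writing an arbitrary $(-1)$-curve as $C = dH - \sum_{i=1}^{5} a_i E_i$, adjunction together with $C^2 = -1$ and $K_Y \cdot C = -1$ forces the two equations $\sum a_i^2 = d^2 + 1$ and $\sum a_i = 3d - 1$. A Cauchy--Schwarz estimate $(3d-1)^2 = (\sum a_i)^2 \leq 5 \sum a_i^2 = 5(d^2+1)$ reduces to $(2d+1)(d-2) \leq 0$, which (since $2d+1 > 0$) forces $d \leq 2$. Solving the two equations for $d=0,1,2$ then yields exactly the five solutions $E_i$, the ten solutions $H - E_i - E_j$, and the single solution $2H - E_1 - \cdots - E_5$, so the sixteen curves exhaust all the $(-1)$-curves on $Y$.

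I expect the only real subtlety to lie in this last step: confirming that no solutions with $d \geq 3$ occur, and that each surviving solution for $d \leq 2$ corresponds under the general position hypothesis to an actual irreducible $(-1)$-curve rather than a reducible or non-effective class. Both points are classical facts about the degree-$4$ del Pezzo surface and follow directly from the Diophantine bound above, after which the desired description $Eff(Y) = \sum_{C} \mathbb{R}_{\geq 0}[C]$ over the sixteen $(-1)$-curves is immediate.
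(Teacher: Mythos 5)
Your proof is correct and follows essentially the same route as the paper: the paper also identifies $Y$ as the del Pezzo surface of degree $4$ (blow-up of five general points), lists the five exceptional curves, ten line transforms, and one conic as the only $(-1)$-curves, and then invokes the earlier lemma that $Eff(Y)$ of a del Pezzo surface with $\rho \geq 3$ is generated by its $(-1)$-curves. The only difference is that the paper asserts the enumeration of the sixteen $(-1)$-curves as classical, whereas you supply the standard adjunction/Cauchy--Schwarz argument verifying it; this is a welcome, correct filling-in of detail rather than a different approach.
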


Then we have the following conclusion.

\begin{proposition}
Let $X$ be a Burniat surface with $K^2=4$ of non-nodal type constructed as above. Then $X$ is a Mori dream surface and the effective cone is the rational polyhedral cone which is the pull-back of the effective cone of $Y.$ There are twelve elliptic curves whose self-intersections are $-1$ and four curves with self-intersections $-4$ and arithmetic genus $0.$
\end{proposition}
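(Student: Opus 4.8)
The Mori dream property and the shape of the effective cone follow immediately from the general Burniat discussion together with Proposition \ref{criterion}. In the non-nodal case $Y$ is a smooth del Pezzo surface of degree $4$, hence a Mori dream surface whose effective cone is the rational polyhedral cone generated by its sixteen $(-1)$-curves (the five exceptional curves, the ten line classes $H-E_i-E_j$, and the conic class $2H-E_1-E_2-E_3-E_4-E_5$). Since $X$ has $q=0$, so that $h^1(X,\mathcal{O}_X)=0$, and the bidouble cover $\pi : X \to Y$ induces an isomorphism $\pi^* : Pic(Y)_{\mathbb{R}} \cong Pic(X)_{\mathbb{R}}$, Proposition \ref{criterion} shows at once that $X$ is a Mori dream surface and that $Eff(X)$ is the pull-back of $Eff(Y)$, namely the rational polyhedral cone spanned by the pull-backs of these sixteen curves.

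It remains to enumerate the negative curves and their invariants. By Proposition \ref{criterion}(3) every negative curve on $X$ is the reduced pull-back of a negative curve on $Y$, and such pull-backs stay irreducible; as $Y$ is a genuine del Pezzo surface it has no $(-2)$-curves, so the negative curves on $X$ are precisely the reduced pull-backs $\widetilde{D}$ of the sixteen $(-1)$-curves $D$ on $Y$, giving sixteen negative curves in all. To read off each pair $(\widetilde{D}^2, p_a(\widetilde{D}))$ I would apply the two intersection formulas recalled above, using $2K_X \simeq \pi^*(-K_Y)$ and the fact that $-K_Y \cdot D = 1$ for every $(-1)$-curve $D$. If $D$ lies on the branch divisor of $\pi$, then $\pi^*D = 2\widetilde{D}$ with $\widetilde{D}^2 = D^2 = -1$ and $K_X \cdot \widetilde{D} = -K_Y \cdot D = 1$, so adjunction $2p_a(\widetilde{D}) - 2 = \widetilde{D}^2 + K_X \cdot \widetilde{D}$ gives $p_a(\widetilde{D}) = 1$; moreover $\widetilde{D}$ sits on the one-dimensional fixed locus of one of the involutions of the $(\mathbb{Z}/2\mathbb{Z})^2$-action and is therefore smooth, hence an elliptic curve with $(\widetilde{D}^2, p_a(\widetilde{D})) = (-1,1)$. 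If instead $D$ is not a branch component, then $\pi^*D = \widetilde{D}$ with $\widetilde{D}^2 = 4D^2 = -4$ and $K_X \cdot \widetilde{D} = 2(-K_Y \cdot D) = 2$, and the same adjunction computation yields $p_a(\widetilde{D}) = 0$, so $(\widetilde{D}^2, p_a(\widetilde{D})) = (-4, 0)$.

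The one genuinely case-specific input, and the main obstacle, is the combinatorial question feeding this dichotomy: exactly which of the sixteen $(-1)$-curves are contained in the branch divisor $D_1 + D_2 + D_3$ of the bidouble cover. I would settle this by tracking the three branch divisors of the non-nodal Burniat configuration through the blow-up of the five points, using the explicit description in \cite{BC1} (equivalently the branch-divisor bookkeeping of \cite{Alexeev, Coughlan}). The expected outcome, refining the $K^2=5$ computation --- where the unique non-branch $(-1)$-curve was the exceptional divisor over the extra point --- is that exactly twelve of the sixteen curves lie on the branch locus while the remaining four do not. Granting this count, the previous paragraph immediately delivers twelve elliptic curves of self-intersection $-1$ and four curves of arithmetic genus $0$ and self-intersection $-4$, as claimed.
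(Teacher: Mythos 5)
Your overall strategy coincides with the paper's: $Y$ is a smooth del Pezzo surface of degree $4$ whose effective cone is spanned by its sixteen $(-1)$-curves, Proposition \ref{criterion} transfers the Mori dream property and the effective cone to $X$, and the branch/non-branch dichotomy together with adjunction (plus smoothness of branch components via the fixed locus of an involution) turns each $(-1)$-curve on $Y$ into either a $(-1)$ elliptic curve or a $(-4)$ rational curve of arithmetic genus $0$ on $X$. All of these steps are correct, and they are exactly the ingredients the paper uses for the whole Burniat family.

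The gap is the step you explicitly postpone: determining which of the sixteen $(-1)$-curves lie on the branch divisor. This is not a side issue --- it is essentially the entire content of the paper's proof of this particular proposition, since everything else follows from the general Burniat discussion; by only stating the ``expected outcome'' of twelve branch components versus four non-branch ones, your argument remains conditional. The verification the paper records is: in the non-nodal $K^2=4$ configuration the nine Burniat lines are precisely the lines through the pairs $\{p_i,p_j\}$ other than $\{p_4,p_5\}$ (the two extra triple points $p_4$, $p_5$ each lie on three of the six lines through the vertices $p_1,p_2,p_3$), so the twelve branch components among the $(-1)$-curves are the strict transforms of these nine lines together with the exceptional curves $E_1,E_2,E_3$ over the vertices, while the four remaining $(-1)$-curves --- $E_4$, $E_5$, the strict transform of $\overline{p_4p_5}$, and the conic through all five points --- are not components of the branch locus. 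Inserting this identification into your dichotomy yields exactly the claimed $12(-1,1)$ and $4(-4,0)$; with this one verification supplied, your proof is complete and agrees with the paper's.
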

\begin{proof}
The strict transforms of nine lines and three exceptional divisors over $p_1, p_2, p_3$ are components of the branch locus of $\pi.$ Therefore their reduced pull-backs are elliptic curves with self-intersection $-1.$ The strict transform of the line $\overline{p_4p_5}$ and exceptional divisors over $p_4, p_5$ are not components of the branch locus of $\pi.$ Therefore their reduced pull-backs are curves with self-intersection $-4$ and arithmetic genus $0.$ Similarly, the strict transform of the the unique conic passing through $\{ p_1,p_2,p_3,p_4,p_5 \}$ is not a component of the branch locus of $\pi.$ Therefore its reduced pull-back is a curve with self-intersection is $-4$ and arithmetic genus $0.$
\end{proof}

Now let us consider the nodal case. From \cite{BC1, Dolgachev} we see that $Y$ is a weak del Pezzo surface whose anticanonical model has a unique $A_1$ singularity. Therefore there is a unique $(-2)$-curve on $Y$ which is the strict transform of the line passing through $p_1,p_4,p_5.$ Moreover we can see that there is no conic passing through all $p_1,p_2,p_3,p_4,p_5$ as follows.

\begin{lemma}
There is no curve in $|2H-E_1-E_2-E_3-E_4-E_5|.$
\end{lemma}
\begin{proof}
Suppose that there is a such curve and let $C$ be the image of the curve in $\mathbb{P}^2.$ From the configuration we see that there is a line passing through three points among these five points. Then $C$ and the line meet at three points and this contradicts to Bezout's theorem.
\end{proof}

There are seven $(-1)$-curves which are strict transforms of lines passing through only two points of $\{ p_1,p_2,p_3,p_4,p_5 \}.$ There are five exceptional divisors on $Y.$ Therefore we can describe the effective cone of $Y$ as follows.

\begin{proposition}
The effective cone of $Y$ is the polyhedral cone generated by above thirteen $(-1)$-curves and the unique $(-2)$-curve.
\end{proposition}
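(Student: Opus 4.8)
The plan is to deduce the statement from the general description of the effective cone of a weak del Pezzo surface recalled above, reducing everything to a complete enumeration of the negative curves on this particular $Y$. Since $Y$ is the blow-up of $\mathbb{P}^2$ at the five points $p_1,\dots,p_5$, we have $\rho(Y)=6\geq 3$, so the lemma on weak del Pezzo surfaces already tells us that $Eff(Y)$ is the rational polyhedral cone generated by the classes of all $(-1)$-curves and all $(-2)$-curves on $Y$. It therefore suffices to prove that these curves are exactly the seven lines and five exceptional divisors listed above together with the single $(-2)$-curve, i.e.\ that no other class is represented by an irreducible negative curve.

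For the $(-2)$-curves I would invoke the characterization lemma quoted earlier: on a blow-up of $\mathbb{P}^2$ at points in almost general position, every $(-2)$-curve is either of the form $E_i-E_{i+1}$ or is linearly equivalent, up to reindexing, to $H-E_i-E_j-E_k$, $2H-E_{i_1}-\cdots-E_{i_6}$, or $3H-2E_{i_1}-E_{i_2}-\cdots-E_{i_8}$. Here the five points are distinct rather than infinitely near, so every $E_i$ is irreducible and the first type cannot occur; and since $r=5<6$, neither of the last two classes is available. Hence the only $(-2)$-curves are strict transforms $H-E_i-E_j-E_k$ of lines through a collinear triple, and because $\{p_1,p_4,p_5\}$ is by construction the unique collinear triple, $H-E_1-E_4-E_5$ is the unique $(-2)$-curve.

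For the $(-1)$-curves I would run through the standard list of the sixteen line classes on a degree-four del Pezzo surface, already used in the non-nodal case, namely the five $E_i$, the ten classes $H-E_i-E_j$, and the conic class $2H-E_1-\cdots-E_5$, and record which of them remain irreducible in the nodal configuration. The five $E_i$ are always irreducible. A class $H-E_i-E_j$ is irreducible precisely when the line through $p_i$ and $p_j$ passes through no third point; since the three pairs contained in $\{1,4,5\}$ all lie on the line $\overline{p_1p_4p_5}$, the classes $H-E_1-E_4$, $H-E_1-E_5$, $H-E_4-E_5$ each split off the $(-2)$-curve and are reducible, while the remaining seven classes are irreducible $(-1)$-curves. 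Finally, the conic class is reducible: the preceding lemma shows there is no irreducible conic through $p_1,\dots,p_5$, and indeed $2H-E_1-\cdots-E_5=(H-E_1-E_4-E_5)+(H-E_2-E_3)$. This leaves exactly the five exceptional and seven line classes, i.e.\ twelve irreducible $(-1)$-curves, in agreement with the enumeration above.

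Combining the two enumerations, $Eff(Y)$ is the rational polyhedral cone generated by these twelve $(-1)$-curves together with the unique $(-2)$-curve. The only step beyond routine bookkeeping is establishing completeness, i.e.\ that every irreducible negative curve appears in the list; this is guaranteed by the $(-2)$-curve characterization lemma on the one hand and by the splitting analysis of the three degenerate line classes and the conic class on the other. I expect the main point to verify carefully to be precisely that each of these four classes genuinely fails to be irreducible once $p_1,p_4,p_5$ are collinear, the conic class relying on the preceding lemma.
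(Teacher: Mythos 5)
Your proof is correct and takes essentially the same route as the paper, which states this proposition without a separate proof, relying on exactly the ingredients you assemble: the lemma that the effective cone of a weak del Pezzo surface with $\rho \geq 3$ is generated by its $(-1)$- and $(-2)$-curves, the characterization lemma for $(-2)$-curves, and the Bezout lemma ruling out an irreducible conic through the five points --- your only additions are to make the completeness of the enumeration explicit and to derive the uniqueness of the $(-2)$-curve from the characterization lemma rather than citing the $A_1$ singularity of the anticanonical model as the paper does. One remark: your count of twelve $(-1)$-curves (seven line transforms plus five exceptional divisors) is the correct one, and the word ``thirteen'' in the proposition's statement is a miscount; your total of thirteen negative curves on $Y$ is consistent with the thirteen negative curves on $X$ recorded in the paper's main theorem for the nodal $K^2=4$ case.
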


Then we have the following conclusion.

\begin{proposition}
Let $X$ be a Burniat surface with $K^2=4$ of nodal type constructed as above. Then $X$ is a Mori dream surface and the effective cone is the pull-back of the effective cones of $Y.$ There are ten elliptic curves whose self-intersections are $-1,$ a curve with self-intersection is $-2$ and arithmetic genus $0$ and two curves with self-intersection is $-4$ and arithmetic genus $0.$
\end{proposition}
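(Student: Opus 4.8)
The plan is to run the same template already used for the $K^2=5$ and non-nodal $K^2=4$ Burniat surfaces, the only new ingredient being the presence of the $(-2)$-curve. The Mori dream property and the shape of the effective cone cost nothing: $Y$ is a weak del Pezzo surface, hence a Mori dream surface, and the degree-$4$ bidouble cover $\pi:X\to Y$ meets the hypotheses of Proposition \ref{criterion}, since $h^1(X,\mathcal{O}_X)=q(X)=0$ (as $X$ has $p_g=q=0$) and $\pi^*:Pic(Y)_{\mathbb{R}}\to Pic(X)_{\mathbb{R}}$ is an isomorphism with inverse $\tfrac14\pi_*$. Parts (1) and (2) of Proposition \ref{criterion} then give immediately that $X$ is a Mori dream surface and that $Eff(X)=\pi^*Eff(Y)$. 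So the real task is to list the negative curves of $X$ together with their pairs $(C^2,p_a(C))$.

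First I would enumerate the negative curves of $Y$. By the preceding proposition $Eff(Y)$ is generated by the twelve $(-1)$-curves---the seven strict transforms of lines through exactly two of the $p_i$ and the five exceptional divisors $E_1,\dots,E_5$---and by the unique $(-2)$-curve $N$, the strict transform of the line through the collinear points $p_1,p_4,p_5$. By Proposition \ref{criterion}(3) each of these thirteen classes pulls back to a single negative curve on $X$ that does not split, and these are all the negative curves of $X$; so $X$ has exactly thirteen negative curves and it remains only to compute their invariants.

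Next I would feed each curve into the two systems of intersection formulas recorded in this subsection, combined with adjunction $2p_a(\widetilde D)-2=\widetilde D^2+K_X\cdot\widetilde D$ and the values $(-K_Y)\cdot D=1$ for a $(-1)$-curve and $(-K_Y)\cdot D=0$ for $N$. For a branch component $D$ one has $\widetilde D^2=D^2$ and $K_X\cdot\widetilde D=(-K_Y)\cdot D$, so a branch $(-1)$-curve yields $(-1,1)$ and the branch curve $N$ yields $(-2,0)$; since such a $\widetilde D$ lies on the one-dimensional fixed locus of an involution of $X$ it is smooth, so the genus-$1$ curves are genuine elliptic curves. For a curve $D$ not in the branch locus one has $\widetilde D^2=4D^2$ and $K_X\cdot\widetilde D=2(-K_Y)\cdot D$, so a non-branch $(-1)$-curve yields $(-4,0)$.

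What is left, and what I expect to be the only delicate point, is deciding which of the thirteen curves are branch components. I would extract this from the bidouble covering data of \cite{BC1}, being careful that the nodal configuration differs from the non-nodal one in two ways: the conic through all five points is now absent (by the Lemma above), and the three lines $\overline{p_1p_4},\overline{p_1p_5},\overline{p_4p_5}$ have coalesced into the single $(-2)$-curve $N$. The expected outcome is that $N$, the seven genuine two-point lines, and the exceptional divisors $E_1,E_2,E_3$ are branch components, while $E_4$ and $E_5$ are not; substituting into the formulas above then produces ten curves of type $(-1,1)$, one curve of type $(-2,0)$, and two curves of type $(-4,0)$, as claimed. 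The main obstacle is thus the branch-locus bookkeeping---verifying that $N$ inherits branch behaviour and that the degeneration to the nodal configuration leaves the branch status of the remaining curves unchanged---rather than any of the cone or adjunction computations, which are routine.
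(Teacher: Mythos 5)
Your proposal is correct and follows essentially the same route as the paper: apply Proposition~\ref{criterion} to the bidouble cover $\pi:X\to Y$ to get the Mori dream property and $Eff(X)=\pi^*Eff(Y)$, then sort the thirteen generators of $Eff(Y)$ by branch status and apply the two sets of pull-back formulas with adjunction. Your branch-locus bookkeeping (the seven two-point lines, $E_1,E_2,E_3$, and the $(-2)$-curve $N$ are branch components; $E_4,E_5$ are not) is exactly what the paper asserts, and your numerical conclusions $10(-1,1)$, $(-2,0)$, $2(-4,0)$ agree.
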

\begin{proof}
The exceptional divisors over $p_1, p_2, p_3$ and the strict transforms of seven lines passing through two points among $\{p_1,\cdots,p_5\}$ are components of the branch locus of $\pi.$ Therefore their reduced pull-backs are elliptic curves with self-intersection $-1.$ The strict transform of the line $\overline{p_1p_4p_5}$ is also a component of the branch locus of $\pi.$ Therefore its reduced pull-back is a curve with self-intersection is $-2$ and arithmetic genus $0.$ The exceptional divisors over $p_4, p_5$ are not branched locus of $\pi$ and their reduced pull-backs are curves with self-intersection $-4$ and arithmetic genus $0.$ Therefore we have the desired result.
\end{proof}

\subsubsection{Burniat surfaces with $K^2=3$}

From Theorem \ref{wdP}, we see that $Y$ has a morphism $|-K_Y| : Y \to \mathbb{P}^3$ where the image $\overline{Y}$ is a cubic surface on $\mathbb{P}^3.$ It is known that $\overline{Y}$ has $3A_1$ singularities. Therefore $Y$ has three $(-2)$-curves which are strict transforms of lines passing through three ponts among $p_1,p_2,p_3,p_4,p_5,p_6.$ Again we can see that there is no conic passing through five points among $p_1,p_2,p_3,p_4,p_5,p_6$ as follows.

\begin{lemma}
There is no curve in $|2H-E_1-E_2-E_3-E_4-E_5|.$
\end{lemma}
\begin{proof}
Suppose that there is a such curve and let $C$ be the image of the curve in $\mathbb{P}^2.$ For any set of five points in $\{p_1,\cdots,p_6\},$ there is a line passing through three points among these five points. Then $C$ and the line meet at three points and this contradicts to Bezout's theorem.
\end{proof}

There are six exceptional divisors and six $(-1)$ curves which are strict transforms of lines passing through only two points among $\{ p_1,p_2,p_3,p_4,p_5 \}.$ Therefore we can describe the effective cone of $Y$ as follows.

\begin{proposition}
The effective cone of $Y$ is the polyhedral cone generated by the above twelve $(-1)$-cuves and three $(-2)$-curves.
\end{proposition}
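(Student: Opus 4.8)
The plan is to combine the general description of the effective cone of a weak del Pezzo surface with a complete enumeration of its negative curves. Since the anticanonical image $\overline{Y}$ is a cubic surface in $\mathbb{P}^3$, the surface $Y$ is a weak del Pezzo surface of degree $3$, hence a blow-up of $\mathbb{P}^2$ at six points $p_1,\dots,p_6$ with $\rho(Y)=7\geq 3$. By the second part of the lemma describing the effective cone of a weak del Pezzo surface, $Eff(Y)$ is the rational polyhedral cone generated by the classes of all $(-1)$-curves and all $(-2)$-curves on $Y$; so it suffices to verify that these are exactly the twelve $(-1)$-curves and three $(-2)$-curves listed above.

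First I would settle the $(-2)$-curves. By the characterization lemma for $(-2)$-curves on a weak del Pezzo surface, on a degree-$3$ surface the only possible $(-2)$-classes are $E_i-E_{i+1}$, $H-E_i-E_j-E_k$, and $2H-E_1-\cdots-E_6$. Since $\overline{Y}$ has precisely three $A_1$ singularities there are exactly three $(-2)$-curves, and they are identified above as the three strict transforms $H-E_i-E_j-E_k$ coming from the three collinear triples. In particular no exceptional divisor is reducible, so every $E_i$ is an irreducible $(-1)$-curve, and the conic class $2H-E_1-\cdots-E_6$ is not an additional $(-2)$-curve.

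Next I would enumerate the $(-1)$-curves among the twenty-seven line classes $E_i$, $H-E_i-E_j$, and $2H-\sum_{l\neq i}E_l$ of a degree-$3$ del Pezzo surface. The six exceptional divisors $E_i$ are irreducible $(-1)$-curves. Among the fifteen classes $H-E_i-E_j$, the nine whose index pair $\{i,j\}$ lies in a collinear triple are reducible, each containing the associated $(-2)$-curve as a component; the remaining six, indexed by the pairs lying on no collinear line, are the strict transforms of lines meeting only two of the six points and are irreducible $(-1)$-curves. Finally, each conic class $2H-\sum_{l\neq i}E_l$ passes through five of the points, and by the lemma just proved every such five-point subset contains a collinear triple, so each of these classes splits off the corresponding $(-2)$-curve and contains no irreducible $(-1)$-curve. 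Hence the irreducible $(-1)$-curves are exactly the six $E_i$ together with the six lines through two points, twelve in all.

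Feeding the two enumerations into the weak del Pezzo description of $Eff(Y)$ yields the claim. The one step that requires genuine care, rather than invocation of a cited result, is the bookkeeping that excludes spurious negative curves, namely showing that the nine ``internal'' line classes and the six conic classes are all reducible. This reduces entirely to the combinatorial fact that every five of the six points contain a collinear triple, which is exactly the content of the preceding lemma; together with the stated $3A_1$ singularity type, the enumeration then closes.
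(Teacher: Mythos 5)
Your proof is correct and takes essentially the same route as the paper: invoke the weak del Pezzo description of $Eff(Y)$ as the cone on the $(-1)$- and $(-2)$-curves, read off the three $(-2)$-curves from the $3A_1$ singularities of the anticanonical model, and use the collinear-triple/Bezout lemma to eliminate the conic classes, leaving exactly the six exceptional curves and six two-point lines. Your explicit bookkeeping over the $27$ line classes, and the observation that the conic classes (and the nine internal line classes) are reducible --- splitting off a $(-2)$-curve --- rather than empty, is a slightly more careful rendering of what the paper's lemma literally asserts, but it is the same argument.
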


Then we have the following conclusion.

\begin{proposition}
Let $X$ be a Burniat surface with $K^2=3$ constructed as a bidouble covering over $Y.$ Then $X$ is a Mori dream surface and the effective cone is the rational polyhedral cone which is the pull-back of the effective cone of $Y.$ There are nine elliptic curves whose self-intersections are $-1,$ three curves with self-intersection is $-2$ and arithmetic genus $0$ and three curves with self-intersection is $-4$ and arithmetic genus $0.$
\end{proposition}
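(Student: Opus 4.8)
The plan is to follow verbatim the strategy used for the other Burniat surfaces, combining Proposition \ref{criterion} with the description of $Eff(Y)$ established in the preceding proposition, and then to run through the fifteen generators one at a time.

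First I would dispose of the Mori dream and effective-cone assertions. Since $Y$ is a weak del Pezzo surface it is a Mori dream surface, and the Burniat construction realizes $X$ as a bidouble (hence finite flat, degree $4$) cover $\pi : X \to Y$ for which $\pi^* : Pic(Y)_{\mathbb{R}} \to Pic(X)_{\mathbb{R}}$ is an isomorphism with inverse $\tfrac14\pi_*$ and which preserves the intersection form up to scale. Proposition \ref{criterion} then yields at once that $X$ is a Mori dream surface and that $Eff(X) = \pi^*Eff(Y)$ is a rational polyhedral cone; this is identical to the $K^2 = 6,5,4$ arguments and requires no new input.

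The substantive part is the census of negative curves. Because $Eff(X)$ is rational polyhedral, Proposition \ref{criterion}(3) tells me that every negative curve on $X$ is the (non-splitting) pull-back of a negative curve on $Y$, and by the preceding proposition the negative curves on $Y$ are exactly the fifteen generators of $Eff(Y)$: the six exceptional divisors, the six strict transforms of lines through precisely two of the $p_i$ (the twelve $(-1)$-curves), and the three strict transforms of lines through three of the $p_i$ (the three $(-2)$-curves). It therefore suffices to pull each of these back. The one genuinely configuration-dependent step, and the main obstacle, is to read off from the explicit Burniat data in \cite{BC1} which of these fifteen curves are components of the branch divisor $D_1+D_2+D_3$. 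I expect, in parallel with the nodal $K^2=4$ case, that all three $(-2)$-curves and nine of the twelve $(-1)$-curves lie on the branch locus, while the three exceptional divisors over $p_4,p_5,p_6$ do not; the count $9+3+3=15$ then matches the claim.

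Granting this, the remaining computations are routine applications of the intersection formulas recorded above together with adjunction on $X$. For a branch $(-1)$-curve $D$ one has $\pi^*D = 2\widetilde{D}$ with $\widetilde{D}^2 = D^2 = -1$, and since $-K_Y\cdot D = 1$ adjunction gives $2p_a(\widetilde{D})-2 = \widetilde{D}^2 + K_X\cdot\widetilde{D} = -1 + 1 = 0$, so $p_a(\widetilde{D})=1$; as $\widetilde{D}$ lies in the one-dimensional fixed locus of an involution it is smooth, hence an elliptic $(-1)$-curve, giving the nine curves of type $(-1,1)$. For a branch $(-2)$-curve $D$ one has $\widetilde{D}^2 = D^2 = -2$ and $-K_Y\cdot D = 0$, whence $2p_a(\widetilde{D})-2 = -2+0$ and $p_a(\widetilde{D})=0$: the three curves of type $(-2,0)$. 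Finally, for a non-branch $(-1)$-curve $D$ one has $\pi^*D=\widetilde{D}$ with $\widetilde{D}^2 = 4D^2 = -4$ and $K_X\cdot\widetilde{D} = 2(-K_Y)\cdot D = 2$, so $2p_a(\widetilde{D})-2 = -4+2$ and $p_a(\widetilde{D})=0$: the three curves of type $(-4,0)$. This exhausts all fifteen negative curves and proves the proposition.
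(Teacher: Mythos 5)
Your proposal is correct and takes essentially the same route as the paper: identify $Eff(X)$ with $\pi^*Eff(Y)$ via Proposition \ref{criterion}, sort the fifteen generators of $Eff(Y)$ according to whether they lie on the branch divisor, and apply the recorded intersection formulas together with adjunction to get the types $9(-1,1)$, $3(-2,0)$, $3(-4,0)$. The branch-locus assignment you only ``expect'' by analogy with the nodal $K^2=4$ case --- the six lines through two points and the exceptional divisors over $p_1,p_2,p_3$ branched, the three lines through three points branched, the exceptional divisors over $p_4,p_5,p_6$ unbranched --- is precisely what the paper asserts from the Burniat configuration of \cite{BC1}, so the one step you left open is exactly the paper's (likewise unverified in detail) input, and the rest of your computation coincides with the paper's proof.
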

\begin{proof}
The exceptional divisors over $p_1, p_2, p_3$ and the strict transforms of six lines passing through two points among $\{p_1,\cdots,p_6\}$ are components of the branch locus of $\pi.$ Therefore their reduced pull-backs are elliptic curves with self-intersection $-1.$ The strict transform of the line $\overline{p_1p_4p_5}, \overline{p_2p_4p_6}, \overline{p_3p_5p_6}$ are also components of the branch locus of $\pi.$ Therefore its reduced pull-backs are curves with self-intersection is $-2$ and arithmetic genus $0.$ The exceptional divisors over $p_4, p_5, p_6$ are not branched locus of $\pi$ and their reduced pull-backs are curves with self-intersection $-4$ and arithmetic genus $0.$ Therefore we have the desired result.
\end{proof}

\subsubsection{Burniat surfaces with $K^2=2$}

Because $Y$ is a weak del Pezzo surface of degree 2. From Theorem \ref{wdP}, we see that $Y$ has a two-to-one map $Y \to \mathbb{P}^2$ which factors $Y \to \overline{Y} \to \mathbb{P}^2.$
From \cite{BC1} we see that the branch locus is union of four lines in general position.
It is known that $Y$ has only nodal singularities. From \cite{BC1, Dolgachev} we see that $\overline{Y}$ has $6A_1$ singularities and $Y$ has only six $(-2)$-curves. From the configuration, $Y$ has seven exceptional curves and strict transforms of $\overline{p_1p_2}, \overline{p_1p_3}, \overline{p_2p_3}.$ We can check that these sixteen curves are only negative curves on $Y$ from the followings.

\begin{lemma}
There is no curve in $|2H-E_1-E_2-E_3-E_4-E_5|.$
\end{lemma}
\begin{proof}
Suppose that there is a such curve and let $C$ be the image of the curve in $\mathbb{P}^2.$ For five points in $\{p_1,\cdots,p_7\},$ there is a line passing through three points among these five points. Then $C$ and the line meet at three points and this contradicts to Bezout's theorem.
\end{proof}

\begin{lemma}
There is no curve in $|3H-2E_1-E_2-E_3-E_4-E_5-E_6-E_7|.$
\end{lemma}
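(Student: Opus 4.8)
The plan is to argue by contradiction in the style of the preceding lemmas, reducing the statement to the already established non-existence of a conic through five of the seven points. Suppose $D := 3H - 2E_1 - E_2 - \cdots - E_7$ were effective, and let $C \subset \mathbb{P}^2$ be the image cubic of a member of $|D|$; thus $C$ has a double point at $p_1$ and passes through $p_2, \ldots, p_7$. A single line meets $C$ in at most three points, so, unlike the conic case treated just above, a line cannot by itself violate Bezout's theorem. The idea is instead to exploit the double point: any line through $p_1$ meets $C$ with multiplicity at least $2$ there.

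First I would locate a configuration line through $p_1$. Among the six $(-2)$-curves of $Y$ there is one, say $N = H - E_1 - E_b - E_c$, whose underlying line $L = \overline{p_1 p_b p_c}$ passes through $p_1$ and two further points $p_b, p_c$; this is the step where the special position of the Burniat points is used. Then $C \cdot L \geq 2 + 1 + 1 = 4 > 3$, so Bezout's theorem forces $L \subset C$, and we may write $C = L + Q$ with $Q$ a conic.

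Next I would analyze the residual conic $Q$. Since $C = L \cup Q$ acquires a double point at $p_1$ only if its two components meet there, $Q$ must pass through $p_1$; moreover $Q$ must contain the four points of $\{p_2,\ldots,p_7\}$ not lying on $L$. Hence $Q$ is a conic through the five points $p_1, p_{j_1}, \ldots, p_{j_4}$, contradicting the preceding lemma, whose proof applies verbatim to any five of the seven points. In divisor language this is the cleaner formulation: since $D \cdot N = 3 - (2+1+1) = -1 < 0$, the irreducible curve $N$ is a fixed component of every member of $|D|$, so $D$ effective would force $D - N = 2H - E_1 - E_{j_1} - E_{j_2} - E_{j_3} - E_{j_4}$ (with $j_1,\ldots,j_4$ the four indices in $\{2,\ldots,7\}$ other than $b,c$) to be effective as well. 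This is exactly the empty conic class of the preceding lemma, the desired contradiction. A minor point to dispatch is that $Q$ could a priori be reducible or non-reduced; in the divisor formulation all cases are handled uniformly, since only the class of $D - N$ matters.

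The main obstacle is the geometric input used in the second step, namely that one of the six special lines passes through $p_1$. I would confirm this from the explicit configuration of \cite{BC1}. It is worth emphasizing that this incidence is in fact forced by the conclusion: on a degree-two del Pezzo surface $D = -K_Y - E_1$ is the Geiser conjugate of the $(-1)$-curve $E_1$, and is effective when the points are in general position, so the lemma can hold only because $p_1$ lies on a collinear triple, which is precisely what makes $D \cdot N < 0$. Verifying this collinearity against the labelling in the construction is therefore the one non-formal ingredient of the argument.
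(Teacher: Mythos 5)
Your proof is correct and is essentially the paper's own argument: the paper takes the line through $p_1,p_6,p_7$ (the $(-2)$-curve $N=H-E_1-E_6-E_7$), observes that $(3H-2E_1-E_2-\cdots-E_7)\cdot N=-1$ forces $N$ to be a fixed component, and concludes that $2H-E_1-E_2-E_3-E_4-E_5$ would be effective, contradicting the preceding lemma. Your divisor-theoretic formulation (with the plane Bezout rephrasing and the unlabelled collinear triple $p_1,p_b,p_c$) is the same reduction.
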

\begin{proof}
Suppose that there is a such curve. Because there is a line passing though all of $p_1,p_6,p_7,$ we see that there is a curve in $|H-E_1-E_6-E_7|.$ Because $(3H-2E_1-E_2-E_3-E_4-E_5-E_6-E_7)(H-E_1-E_2-E_3)=-1$ and the element in $|H-E_1-E_6-E_7|$ is irreducible, we see that there is an element in $|2H-E_1-E_2-E_3-E_4-E_5|.$ However this conclusion contradicts to the previous lemma.
\end{proof}

Using similar argument, we see that there are only three $(-1)$-curves on $Y$ which are strict transforms of $\overline{p_1p_2}, \overline{p_1p_3}, \overline{p_2p_3}.$

\begin{proposition}
The effective cone of $Y$ is the polyhedral cone generated by the above sixteen negative curves.
\end{proposition}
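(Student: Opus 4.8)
The plan is to obtain the effective cone directly from the general description available for any weak del Pezzo surface, combined with the classification of negative curves assembled in the lemmas above. Since $Y$ is a weak del Pezzo surface of degree $2$, it is the blow-up of $\mathbb{P}^2$ at seven points and so $\rho(Y)=8\geq 3$; the earlier lemma then gives $Eff(Y)=\sum_{[C]}\mathbb{R}_{\geq 0}[C]$, where $C$ runs over the $(-1)$-curves and $(-2)$-curves of $Y$. It therefore suffices to check that the sixteen curves listed above are exactly the negative curves on $Y$.

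First I would account for the $(-2)$-curves. By the characterization lemma every $(-2)$-curve is, up to permutation of the indices, of the form $E_i-E_{i+1}$, $H-E_i-E_j-E_k$, $2H-E_{i_1}-\cdots-E_{i_6}$, or $3H-2E_{i_1}-E_{i_2}-\cdots-E_{i_8}$. As the seven points are distinct, each $E_i$ is irreducible, so no class $E_i-E_{i+1}$ appears and the only $(-2)$-curves are strict transforms of lines through three of the points. Since $\overline{Y}$ has exactly $6A_1$ singularities there are precisely six of them, one for each collinear triple forced by the configuration.

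Next I would classify the $(-1)$-curves. On a degree-two del Pezzo surface the $(-1)$-classes are of the four types $E_i$, $H-E_i-E_j$, $2H-E_{i_1}-\cdots-E_{i_5}$, and $3H-2E_{i_1}-E_{i_2}-\cdots-E_{i_6}$, and I must show that for the present special configuration the only ones represented by an irreducible curve are the seven exceptional divisors $E_1,\dots,E_7$ together with the three line transforms $\overline{p_1p_2},\overline{p_1p_3},\overline{p_2p_3}$. The two preceding lemmas dispose of the two most delicate families by showing that $|2H-E_1-\cdots-E_5|$ and $|3H-2E_1-E_2-\cdots-E_7|$ are empty, each via intersecting a hypothetical member with a line through three of the relevant points and invoking Bezout. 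Running the same Bezout argument over all admissible index choices eliminates every remaining conic- and cubic-type class, as well as all line classes $H-E_i-E_j$ except the three listed, leaving precisely ten $(-1)$-curves.

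Combining the two counts produces the sixteen negative curves in the statement, and the earlier lemma identifies the rational polyhedral cone they generate with $Eff(Y)$. I expect the only genuine obstacle to lie in the exhaustiveness of the previous step: one must verify, over all permutations of the seven points, that each surviving conic-, cubic- or line-type class is forced by one of the six prescribed collinearities to meet some line in more points than Bezout permits, ruling out any further irreducible negative curve. This bookkeeping is routine but has to be carried out systematically, since the content of the proposition is precisely that the sixteen curves are complete, not merely contained in $Eff(Y)$.
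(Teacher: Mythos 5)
Your proposal is correct and follows essentially the same route as the paper: reduce to the weak del Pezzo lemma that $Eff(Y)$ is generated by $(-1)$- and $(-2)$-curves, take the six $(-2)$-curves from the $6A_1$ description of $\overline{Y}$, and eliminate the remaining $(-1)$-classes using the two Bezout lemmas together with the configuration bookkeeping that the paper compresses into ``using similar argument.'' One small logical wrinkle: the claim that the only $(-2)$-curves are line-type does not follow from irreducibility of the $E_i$ alone, since conic-type classes $2H-E_{i_1}-\cdots-E_{i_6}$ must also be excluded, but this is immediate either from the $6A_1$ count you invoke in the next sentence or from the first lemma, which already rules out conics through any five of the seven points.
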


Therefore we have the following conclusion.

\begin{proposition}
Let $X$ be a Burniat surface with $K^2=2$ constructed as a bidouble covering over $Y.$ Then $X$ is a Mori dream surface and the effective cone is the rational polyhedral cone which is the pull-back of the effective cone of $Y.$ There are six elliptic curves whose self-intersections are $-1,$ four curves with self-intersection is $-4$ and arithmetic genus $0$ and six curves with self-intersection $-2$ and arithmetic genus $0.$
\end{proposition}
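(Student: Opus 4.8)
The plan is to exploit the bidouble covering structure $\pi : X \to Y$ together with Proposition \ref{criterion} and the explicit description of the sixteen negative curves on the weak del Pezzo surface $Y$ of degree $2$ established in the preceding proposition. Since $\pi^* : Pic(Y)_{\mathbb{R}} \to Pic(X)_{\mathbb{R}}$ is an isomorphism satisfying the hypotheses of Proposition \ref{criterion}, part (3) of that proposition tells us that $Eff(X)$ is the pull-back of $Eff(Y)$ and that every negative curve on $X$ is the non-splitting reduced pull-back of one of the sixteen negative curves on $Y$. Hence $X$ is a Mori dream surface with exactly sixteen negative curves, and the entire task reduces to identifying, for each generator of $Eff(Y)$, whether it lies on the branch locus of $\pi$ and then computing $\widetilde{D}^2$ and $p_a(\widetilde{D})$ for its reduced pull-back $\widetilde{D}$.

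The core computation uses the two intersection formulas recorded just before the $K^2=6$ subsection, together with the relation $2K_X \sim \pi^*(-K_Y)$. First I would sort the sixteen generators into branch and non-branch curves. The six $(-2)$-curves on $Y$ come in two flavors: those lying on the branch locus yield reduced pull-backs with $\widetilde{D}^2 = D^2 = -2$, while those not on the branch locus yield $\widetilde{D}^2 = 4D^2 = -8$; to land exactly on the stated count of six $(-2,0)$-curves I expect all six $(-2)$-curves to be branch components, so each contributes a curve with self-intersection $-2$ and, being a smooth rational curve fixed by an involution, arithmetic genus $0$. For the ten $(-1)$-curves I would check the branch locus: the three strict transforms of $\overline{p_1p_2}, \overline{p_1p_3}, \overline{p_2p_3}$ together with three of the exceptional divisors should be branch components, giving six curves with $\widetilde{D}^2 = D^2 = -1$ that lie in the one-dimensional fixed locus of an involution and are therefore smooth elliptic curves (as in the $K^2 \geq 3$ cases the genus is forced by adjunction $K_X \cdot \widetilde{D} + \widetilde{D}^2 = 2p_a(\widetilde{D}) - 2$ with $K_X \cdot \widetilde{D} = (-K_Y)\cdot D = 1$); the remaining four exceptional divisors should be non-branch, giving curves with $\widetilde{D}^2 = 4D^2 = -4$ and arithmetic genus $0$.

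The main obstacle is bookkeeping the branch locus correctly: I must determine precisely which of the sixteen negative curves on $Y$ are components of the bidouble branch divisor, since this single datum decides both the self-intersection (via $D^2$ versus $4D^2$) and the genus of each pull-back. Following the configuration in \cite{BC1, Coughlan}, the branch divisor of a $K^2=2$ Burniat surface is arranged so that exactly the three short lines $\overline{p_ip_j}$, the six $(-2)$-curves, and three of the seven exceptional curves appear as branch components, while the other four exceptional curves do not; verifying this split against the $(\mathbb{Z}/2\mathbb{Z})^2$-character decomposition of the building data is the only genuinely delicate point. Once the split is confirmed, the adjunction computation $2p_a(\widetilde{D}) - 2 = K_X\cdot\widetilde{D} + \widetilde{D}^2$ pins down every arithmetic genus, yielding the stated six $(-1,1)$, six $(-2,0)$, and four $(-4,0)$ curves, and the proof concludes exactly as in the earlier Burniat cases.
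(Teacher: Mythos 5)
Your proposal is correct and follows essentially the same route as the paper: identify which of the sixteen negative curves on $Y$ (three $(-1)$-curves, seven exceptional curves, six $(-2)$-curves) are components of the bidouble branch locus, then apply Proposition \ref{criterion} together with the intersection formulas derived from $2K_X \sim \pi^*(-K_Y)$ to get the self-intersections and arithmetic genera of the reduced pull-backs. Your version merely makes explicit the adjunction computations and the branch/non-branch bookkeeping that the paper's proof states without detail.
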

\begin{proof}
The strict transforms of three $(-1)$-curves $\overline{p_1p_2}, \overline{p_1p_3}, \overline{p_2p_3}$ are components of the branch locus of $\pi.$ Therefore their reduced pull-backs are elliptic curves with self-intersection $-1.$ Among the seven exceptional divisors on $Y,$ three of them lie on the branch locus of $\pi$ and hence their reduced pull-backs are elliptic curves whose self-intersection numbers are all $-1.$ Four of the exceptional divisors do not lie on the branch locus of $\pi$ and hence their reduced pull-backs are curves with self-intersection number $-4$ and arithmetic genus $0$ curves. There are six $(-2)$-curves on $Y$ which lie on the branch locus and hence their reduced pull-backs are curves with self-intersection $-2$ and arithmetic genus $0.$ Therefore we obtain the conclusion.
\end{proof}

\subsection{Kulikov surfaces}

Kulikov surfaces are $(\mathbb{Z}/3\mathbb{Z})^2$-covering of del Pezzo surfaces with degree 6. Because both surfaces have Picard rank 4, we see that our criterion works for these surfaces. Let $X$ be a Kulikov surface and $\pi : X \to Y$ be the quotient map of the $(\mathbb{Z}/3\mathbb{Z})^2$-covering. In this case, the pull-back $\pi^* : \Pic(X) _{\mathbb{R}} \to \Pic(Y) _{\mathbb{R}}$ is an isomorphism and we see that $X$ is a Mori dream space. See \cite{Coughlan} for more details.

\begin{lemma}\cite{Coughlan}
We have the following numerical equivalence.
$$ 3K_X \sim_{num} \pi^*(-K_Y) $$
\end{lemma}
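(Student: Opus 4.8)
The final statement to prove is the numerical equivalence $3K_X \sim_{num} \pi^*(-K_Y)$, where $\pi : X \to Y$ is the $(\mathbb{Z}/3\mathbb{Z})^2$-covering realizing a Kulikov surface $X$ over a del Pezzo surface $Y$ of degree $6$. This is exactly the analogue for the $(\mathbb{Z}/3\mathbb{Z})^2$ case of the relation $2K_X \sim \pi^*(-K_Y)$ that appears for the bidouble (i.e. $(\mathbb{Z}/2\mathbb{Z})^2$) coverings of Burniat surfaces, so the plan is to mimic that computation using the ramification formula together with the standard structure theory of abelian covers.

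The plan is to start from the Hurwitz/ramification formula for the finite morphism $\pi$, namely $K_X \sim_{num} \pi^*(K_Y) + R$, where $R$ is the (reduced) ramification divisor; this is exactly remark (1) recalled earlier in the excerpt applied in the abelian-cover setting. First I would identify the branch locus of $\pi$ on $Y$: for a $(\mathbb{Z}/3\mathbb{Z})^2$-cover the branch data is prescribed by the configuration of curves on the degree-$6$ del Pezzo surface used in Kulikov's construction, and over each branch curve the cover is totally ramified with stabilizer a $\mathbb{Z}/3\mathbb{Z}$, so each branch component $D$ contributes ramification of order $3$, i.e. $\pi^* D = 3\widetilde{D}$ and the corresponding coefficient in $R$ is $(3-1)\widetilde{D} = 2\widetilde{D}$. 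Summing over the branch components and pushing the relation down through the isomorphism $\pi^* : \Pic(Y)_{\mathbb{R}} \xrightarrow{\sim} \Pic(X)_{\mathbb{R}}$, the ramification divisor $R$ should be expressible as $\tfrac{2}{3}\pi^*(\text{total branch divisor})$ up to numerical equivalence. The arithmetic should then collapse the branch sum against $K_Y$ so that $K_X + \pi^* K_Y$ (or the relevant combination) lands on the claimed multiple of $\pi^*(-K_Y)$.

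Concretely, I would feed in the specific branch locus from Kulikov's construction (as cited to \cite{Coughlan}): the branch divisor is a specified sum of $(-1)$-curves and lines on the degree-$6$ del Pezzo $Y$ whose total class is numerically a multiple of $-K_Y$. Writing the total branch divisor $B$ with $B \sim_{num} c(-K_Y)$ for the appropriate constant $c$ dictated by the combinatorics, the ramification contributes $R \sim_{num} \pi^*\bigl(\tfrac{2}{3}B\bigr)$, and substituting into $K_X \sim_{num} \pi^*(K_Y) + R \sim_{num} \pi^*\bigl(K_Y + \tfrac{2}{3}B\bigr)$ should yield $K_X \sim_{num} \pi^*\bigl(K_Y + \tfrac{2c}{3}(-K_Y)\bigr)$. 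Choosing the constants so that the bracket equals $\tfrac{1}{3}(-K_Y)$ gives exactly $3K_X \sim_{num} \pi^*(-K_Y)$; this is the point at which one must check that the combinatorial weights from the $(\mathbb{Z}/3\mathbb{Z})^2$-cover (the way the six or more branch components distribute among the three nontrivial characters of $(\mathbb{Z}/3\mathbb{Z})^2$) produce the correct rational multiple.

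The main obstacle I expect is not the formal ramification bookkeeping but pinning down the precise branch data and the character-sheaf decomposition of the $(\mathbb{Z}/3\mathbb{Z})^2$-cover, since the relation $\pi^* D = 3\widetilde D$ only holds for components on which the inertia is the full $\mathbb{Z}/3\mathbb{Z}$ and one must verify that every branch component of $\pi$ is totally ramified rather than, say, a union of curves with distinct stabilizers. Once the building data of the cover is fixed from \cite{Coughlan} and one confirms that the total branch class is the expected multiple of $-K_Y$, the identity follows by the numerical computation above; I would therefore present the argument as a direct application of the abelian-cover canonical bundle formula, deferring the explicit verification of the branch class to the cited construction. Note also that working only up to numerical (rather than linear) equivalence is what makes the $\tfrac{2}{3}$-factor manipulations legitimate, since $\Pic(Y)_{\mathbb{R}} \cong \Pic(X)_{\mathbb{R}}$ lets us clear denominators without worrying about torsion.
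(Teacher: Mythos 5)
The paper does not actually prove this lemma; it is imported verbatim from \cite{Coughlan}, so there is no internal argument to compare yours against. Your derivation is the standard one and is essentially correct: it is the $(\mathbb{Z}/3\mathbb{Z})^2$-analogue of the bookkeeping the paper displays for Burniat surfaces, where $2K_X \sim \pi^*(-K_Y)$ arises from a branch divisor of class $-3K_Y$ with ramification weight $\tfrac{1}{2}$. Three points to tighten it. First, the input you defer to the cited construction does check out: the Kulikov branch divisor on the degree-$6$ del Pezzo surface $Y$ is the hexagon of six $(-1)$-curves (total class $-K_Y$) together with the three curves of class $H-E_1$, $H-E_2$, $H-E_3$ coming from the lines through the fourth point of the complete quadrilateral (total class $-K_Y$), so $B \sim_{num} -2K_Y$, your constant is $c=2$, and indeed $K_X \sim_{num} \pi^*\bigl(K_Y + \tfrac{4}{3}(-K_Y)\bigr) = \tfrac{1}{3}\pi^*(-K_Y)$. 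Second, the obstacle you flag about components possibly not being totally ramified resolves itself: for a smooth abelian cover the inertia group along a divisorial component is cyclic, and every nontrivial cyclic subgroup of $(\mathbb{Z}/3\mathbb{Z})^2$ has order $3$, so every branch component automatically has ramification index $3$; moreover the identity $R \sim_{num} \tfrac{2}{3}\pi^*B$ holds whether or not the preimages are irreducible, since each preimage component carries coefficient $2$ in $R$ and $3$ in $\pi^*B$. Third, a wording slip: $R$ is not the \emph{reduced} ramification divisor but $\sum_i (e_i-1)R_i$, i.e.\ coefficient $2$ on each component; your subsequent computation uses the correct coefficients, so nothing breaks. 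A quick sanity check worth recording: $(3K_X)^2 = 9K_X^2 = 54 = 9K_Y^2 = (\pi^*(-K_Y))^2$, consistent with $K_X^2=6$ for Kulikov surfaces.
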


Therefore we obtain the following conclusion.

\begin{proposition}
Let $X$ be a Kulikov surface constructed as above. Then $X$ is a Mori dream surface and the effective cone and nef cone of $X$ are pull-back of those of $Y.$ The six negative curves on $X$ are elliptic curves with self-intersection $-1.$
\end{proposition}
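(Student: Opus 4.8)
The plan is to apply Proposition \ref{criterion} to the finite flat degree-$9$ morphism $\pi : X \to Y$, in exactly the same spirit as the Burniat cases. First I would record that $Y$, being a del Pezzo surface of degree $6$, is the blow-up of $\mathbb{P}^2$ at three points in general position, so $\rho(Y)=4$ and $Y$ is a Mori dream surface whose effective cone is the rational polyhedral cone generated by its six $(-1)$-curves: the three exceptional divisors $E_1,E_2,E_3$ and the three strict transforms of the lines $\overline{p_ip_j}$. Since $X$ is of general type with $p_g=q=0$ we have $h^1(X,\mathcal{O}_X)=0$, and by hypothesis $\pi^* : Pic(Y)_{\mathbb{R}} \to Pic(X)_{\mathbb{R}}$ is an isomorphism with inverse $\tfrac{1}{9}\pi_*$. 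Thus Proposition \ref{criterion}(1)--(2) at once yields that $X$ is a Mori dream surface and that its effective, nef and semiample cones are the pull-backs of those of $Y$.

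For the negative curves, Proposition \ref{criterion}(3) tells me that every negative curve on $X$ is the reduced pull-back of one of the six $(-1)$-curves on $Y$ and that these pull-backs do not split. It then remains to identify the numerical type $(\widetilde{D}^2,p_a(\widetilde{D}))$ of each such $\widetilde{D}$. Writing $\pi^*D = m\widetilde{D}$ with $\widetilde{D}$ irreducible reduced, the degree-$9$ projection formula gives $m^2\widetilde{D}^2 = (\pi^*D)^2 = 9D^2 = -9$. I expect each of the six curves to lie in the branch locus and be totally ramified, so that $m=3$ and hence $\widetilde{D}^2 = D^2 = -1$.

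To compute the genus I would use the relation $3K_X \sim_{num} \pi^*(-K_Y)$ together with $\widetilde{D} = \tfrac{1}{3}\pi^*D$, which gives
$$ K_X \cdot \widetilde{D} = \frac{1}{9}\,\pi^*(-K_Y)\cdot \pi^*D = (-K_Y)\cdot D = 1, $$
where the last equality holds because $D$ is a $(-1)$-curve on the del Pezzo surface $Y$. Adjunction then yields $2p_a(\widetilde{D}) - 2 = \widetilde{D}^2 + K_X\cdot\widetilde{D} = 0$, so $p_a(\widetilde{D}) = 1$. Since $\widetilde{D}$ lies in the fixed locus of an order-$3$ automorphism of the cover it is smooth, hence a smooth elliptic curve, which completes the description of the six negative curves.

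The main obstacle I anticipate is the step asserting that all six $(-1)$-curves are totally ramified with $m=3$: this is precisely where the combinatorics of the $(\mathbb{Z}/3\mathbb{Z})^2$-covering enters, and it must be read off from the branch data of Kulikov's construction (following \cite{Coughlan}) rather than deduced formally. A priori $m$ could equal $1$ (an unramified curve with $\widetilde{D}^2=-9$), or the preimage could split; ruling these out requires checking that each $(-1)$-curve on $Y$ is a component of the branch divisor along which a cyclic inertia group of order $3$ acts, giving total ramification. Once the branch configuration is in hand, the remaining intersection-theoretic bookkeeping above is routine.
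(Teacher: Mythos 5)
Your proposal is correct and takes essentially the same approach as the paper: apply Proposition \ref{criterion} to the degree-$9$ cover $\pi : X \to Y$ of the degree-$6$ del Pezzo surface, then compute $\widetilde{D}^2 = D^2 = -1$ and $K_X \cdot \widetilde{D} = (-K_Y)\cdot D = 1$ for the reduced pull-backs of the six $(-1)$-curves using $3K_X \sim_{num} \pi^*(-K_Y)$, and conclude by adjunction. The one step you flagged as the anticipated obstacle---that each $(-1)$-curve lies in the branch locus with total ramification, so $\pi^*D = 3\widetilde{D}$ rather than $m=1$ or a split preimage---is exactly what the paper asserts from the construction (following \cite{Coughlan}) without further argument, so your treatment is, if anything, more explicit about where that input is used.
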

\begin{proof}
From Proposition \ref{criterion} and construction, it is straightforward that $X$ is a Mori dream surface since $Y$ is a del Pezzo surface of degree 6. Therefore $\Eff(Y)$ is generated by six $(-1)$-curves where three of them are exceptional divisors $E_1,E_2,E_3$ and three of them $L_{1},L_{2},L_{3}$ are strict transforms of lines passing through two points among the three blow-up centers. Therefore $\Eff(X)$ is a rational polyhedral cone generated by pullbacks of the six $(-1)$-curves on $Y.$

Now let us compute intersection numbers. Let $\widetilde{E_i}$ be the reduced pull-back of $E_i$ and $\widetilde{L_{i}}$ be the reduced pull-back of $L_{i}$ for $i=1,2,3.$ We have $K_Y \cdot E_i=-1$ and $K_Y \cdot L_i=-1$ since they are $(-1)$-curves. They are belong to the branch locus of $\pi.$ Therefore we have the following identities
$$ K_X \cdot \widetilde{E_i} = \frac{1}{9} \pi^*(-K_Y) \cdot \pi^*{E_i} = (-K_Y) \cdot {E_i} = 1 $$
$$ K_X \cdot \widetilde{L_i} = \frac{1}{9} \pi^*(-K_Y) \cdot \pi^*{L_i} = (-K_Y) \cdot {L_i} = 1 $$
$$ \widetilde{E_i} \cdot \widetilde{E_i} = \frac{1}{9} \pi^*{E_i} \cdot \pi^*{E_i} = {E_i}^2 = -1 $$
$$ \widetilde{L_i} \cdot \widetilde{L_i} = \frac{1}{9} \pi^*{L_i} \cdot \pi^*{L_i} = {L_i}^2 = -1 $$
for all $i \in \{1,2,3 \}.$ Therefore we obtain the desired result.
\end{proof}

\section{Product-quotient surfaces}

Product-quotient surfaces form an important classes of algebraic surfaces and provide many examples of surfaces of general type with $p_g=0.$ Bauer, Catanese, Grunewald and Pignatelli classified minimal product-quotient surfaces with $p_g=0$ in \cite{BCGP, BP}. In this section, we study effective, nef and semiample cones of some product-quotient surfaces with $p_g=0.$ From this we prove that several product-quotient surfaces with $p_g=0$ are Mori dream surfaces.

\subsection{general properties}
Let us recall basic definition and results about product-quotient surfaces.

\begin{definition}\cite{BCGP}\label{product-quotient surface}
Let $G$ be a finite group and $C$, $D$ be algebraic curves with faithful $G$-action. Consider the diagonal action of $G$ on $C \times D.$ An algebraic surface $X$ which is isomorphic to the minimal resolution of $(C \times D)/G$ is called a product-quotient surface and $(C \times D)/G$ is called the quotient model of $X.$
\end{definition}

\begin{displaymath}
\xymatrix{
X \ar[rd] & & \ar[ld] C \times D  \\
 & (C \times D)/G & }
\end{displaymath}

As we proved, product-quotient surfaces with $p_g=q=0, K^2=8$ are Mori dream spaces and we are going to find more product-quotient surfaces which are Mori dream spaces. Product-quotient surfaces with $p_g=q=0$ can be studied via many ways. Let $X$ be a product-quotient surface with $p_g=q=0,$ i.e. minimal resolution of $(C \times D)/G.$ Then we have the following diagram.

\begin{displaymath}
\xymatrix{
X \ar[rd] & \ar[ld] C \times D \ar[d]  \ar[rd] &  \\
C \ar[d] & \ar[ld] (C \times D)/G \ar[rd] & \ar[d] D  \\
C/G \cong \mathbb{P}^1 &  & D/G \cong \mathbb{P}^1 }
\end{displaymath}

Therefore there are two natural fibration maps to projective lines and many geometric information can be extracted from group action of the product of curves. Sometimes, we can compute effective, nef and semiample cones of product-quotient surfaces via these fibration structures. In particular, we can compute the ($\mathbb{Q}$-)basis of Picard groups of the product-quotient surfaces from the two fibration structures. When the $G$-action on $C \times D$ is free(or equivalently $K^2=8$), it is easy to see that the Picard lattice is the hyperbolic plane $H.$ Therefore from now on we study $K^2 \leq 6$ cases. The fibers of two fibrations to projective lines were studied by Serrano in \cite{Serrano}. Let us recall results in \cite{Serrano}.

\begin{theorem}\cite[Theorem 2.1]{Serrano}
Let $c$ in a point of $C$ and $\bar{c}$ be its image of $C \to C/G.$ \\
(1) The reduced fiber of $X \to C/G$ over $\bar{c}$ is the union of an irreducible smooth curve $F_1$, called the central component, and either none or at least two mutually disjoint Hirzebruch-Jung strings, each one meeting the central component at one point. These stings are in one-to-one correspondence with the branch points of $D \to D/G_c$ where $G_c$ is the stabilizer group of $c \in C.$ \\
(2) The intersection of a Hirzebruch-Jung string with $F$ is transversal and takes place at only one of the end components of the string. \\
(3) $F_1$ is isomorphic to $D/G_c$ and has multiplicity equal to $|G_c|$ in the fiber. \\
(4) Let $E=E_1+\cdots+E_k$ be an Hirzebruch-Jung string ordered linearly in the fiber over $\bar{c}$ and consider its image $\bar{d}$ of the another fibration $X \to D/G.$ Let $G_1$ be the central component of the fiber of $X \to D/G$ over $\bar{d}.$ Then $E$ meets $F_1$ and $G_1$ at opposite ends.
\end{theorem}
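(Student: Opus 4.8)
The plan is to work on the quotient model $(C\times D)/G$ and analyze the fibre of the induced map $(C\times D)/G \to C/G$ over $\bar c$ together with its minimal resolution $X \to (C\times D)/G$. First I would identify the fibre set-theoretically. Pulling $\bar c$ back to $C$ gives the orbit $Gc$, and since $G$ acts transitively on $Gc$ with stabilizer $G_c$, the orbit--stabilizer principle yields a natural isomorphism $\big((Gc)\times D\big)/G \cong D/G_c$. Hence the reduced fibre of the quotient model over $\bar c$ is the single irreducible smooth curve $D/G_c$, and its strict transform is the central component $F_1 \cong D/G_c$ of statement (3). The multiplicity $|G_c|$ then follows from the fact that $C \to C/G$ is totally ramified at $c$ of index $|G_c|$ (the cyclic group $G_c$ acts on $T_cC$ by a primitive character), so a local coordinate on $C/G$ pulls back to a power $x^{|G_c|}$ and therefore cuts out $F_1$ with multiplicity $|G_c|$.

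Next I would locate the singularities of $(C\times D)/G$ lying over $\bar c$. These are the images of points $(c,d)$ whose stabilizer $G_c\cap G_d$ is nontrivial, and on $F_1 \cong D/G_c$ they are precisely the branch points of the cyclic cover $D \to D/G_c$; this gives the claimed one-to-one correspondence in (1). At such a point the group $H=G_c\cap G_d$ is cyclic (being a subgroup of the point stabilizer $G_c$) and acts faithfully on each of $T_cC$ and $T_dD$, so locally the singularity is a cyclic quotient singularity of type $\tfrac1m(1,b)$ with $m=|H|$ and $\gcd(b,m)=1$, whose minimal resolution is a Hirzebruch--Jung string. Distinct branch points sit at distinct points of $F_1$, so the strings are mutually disjoint and each meets $F_1$ in a single point.

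To establish the dichotomy ``none or at least two'' I would exploit that $D\to D/G_c$ is a connected abelian Galois cover. Writing $g'$ for the genus of $D/G_c$ and letting the cover be branched at $P_1,\dots,P_k$ with nonzero local monodromies $a_1,\dots,a_k\in G_c$, the single surface relation $\prod_j[\alpha_j,\beta_j]\prod_i\gamma_i=1$ in the fundamental group of the punctured base maps, under abelianization into the cyclic group $G_c$, to $\sum_i a_i = 0$. A single branch point would force $a_1=0$, i.e. no ramification there, a contradiction; hence $k\neq 1$, which is exactly the assertion that there are either no strings or at least two. This abelian-cover constraint is the step I expect to be the main (if modest) obstacle, since the remaining arguments are purely local.

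Finally, for (2) and (4) I would run the explicit Hirzebruch--Jung resolution at each singular point. In local coordinates the two fibre directions are the coordinate axes $\{x=0\}$ and $\{y=0\}$: the axis $\{x=0\}$ ($c$ fixed, $d$ varying) maps to $F_1\cong D/G_c$, while $\{y=0\}$ maps to the central component $G_1\cong C/G_d$ of the other fibration $X\to D/G$ over the image $\bar d$. The standard description of the resolution of $\tfrac1m(1,b)$ shows that the strict transforms of two transverse smooth branches through the singular point meet the resulting chain transversally, at its two opposite end components; applying this yields the transversality and the ``at one end'' conclusion of (2), together with the ``opposite ends'' statement of (4). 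The symmetry between the roles of $C$ and $D$ makes the identification of $G_1$ as a central component of the second fibration automatic, completing the proof.
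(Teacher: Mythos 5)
The paper does not prove this statement at all: it is quoted, with citation, from Serrano's \emph{Isotrivial fibred surfaces} (Theorem 2.1 there), so there is no internal proof to compare your attempt against; it can only be judged on its own merits and against the standard argument. On those terms your reconstruction is correct, and it follows what is essentially the classical line of proof: the orbit--stabilizer identification $\big((Gc)\times D\big)/G \cong D/G_c$ of the reduced fibre of the quotient model, giving the central component and (3); the identification of the singular points lying on that fibre with the branch points of $D \to D/G_c$, each a cyclic quotient singularity $\frac{1}{m}(1,b)$ with $\gcd(b,m)=1$ because $H = G_c \cap G_d$ acts on both tangent lines by primitive characters, whose minimal resolution is a Hirzebruch--Jung string; the abelianized surface-group relation showing a nontrivial cyclic cover of a curve cannot branch at exactly one point, which is exactly the ``none or at least two'' dichotomy; and the standard local fact that the strict transforms of the two transverse branches through such a singularity meet the exceptional chain transversally at its two opposite end components, which yields (2) and (4). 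The one step you state too loosely is the multiplicity claim in (3): knowing that a local coordinate at $\bar c$ pulls back to $x^{|G_c|}$ on $C$ computes the multiplicity upstairs on $C\times D$, and to descend it to the quotient model you also need that the quotient map $C\times D \to (C\times D)/G$ is unramified in codimension one along $\{c\}\times D$; this holds because no nontrivial element of $G$ fixes $\{c\}\times D$ pointwise (faithfulness of the action on $D$), and it deserves to be said explicitly, since without it the multiplicity could a priori drop. With that remark added, your argument is complete.
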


The self-intersection of strict transform of the reduced fiber was computed by Polizzi in \cite{Polizzi2}. Let us recall a lemma which is very useful to compute the Picard lattice.

\begin{proposition}\cite[Proposition 2.8]{Polizzi2} \cite[Lemma 5.3]{BCGP}
Let $F$ be a reduced fiber of $(C \times D)/G \to C/G$ as a Weil divisor and let $\widetilde{F}$ be its strict transform in $X.$ Suppose that each singular point $x_i \in (C \times D)/G$ on $F$ is of type $\frac{1}{n_i}(1,k_i).$ Then we have the following identity.
$$ -\widetilde{F}^2=\sum_{x_i \in F} \frac{k_i}{n_i}. $$
\end{proposition}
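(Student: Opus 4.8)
The plan is to compare $\widetilde F$ with the numerical pullback of $F$ under the minimal resolution $\sigma\colon X \to S$, where $S = (C\times D)/G$. First I would note that since $S$ has only cyclic quotient singularities it is $\mathbb{Q}$-factorial, so the Weil divisor $F$ is $\mathbb{Q}$-Cartier and $F^2$ is defined. By Serrano's theorem the fibre of $S \to C/G \cong \mathbb{P}^1$ over $\bar c$ has a single (central) component, so the scheme-theoretic fibre equals $|G_c|\,F$; as fibres are numerically trivial this forces
$$F^2 = 0.$$

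Next I would write the numerical pullback as
$$\sigma^* F = \widetilde F + \sum_{i}\sum_{j=1}^{s_i} a_{ij}\, E_{ij},$$
where $E_{i1},\dots,E_{is_i}$ is the Hirzebruch--Jung chain over the singular point $x_i$ and the rational coefficients $a_{ij}\ge 0$ are uniquely determined by the conditions $\sigma^* F \cdot E_{ij} = 0$ for all $i,j$ (the exceptional curves are $\sigma$-contracted). Since $\sigma^* F \cdot E_{ij} = 0$ and $(\sigma^* F)^2 = F^2 = 0$, the identity $(\sigma^* F)^2 = (\sigma^* F)\cdot\widetilde F$ gives
$$0 = \widetilde F^2 + \sum_{i,j} a_{ij}\,(E_{ij}\cdot \widetilde F).$$
By Serrano's theorem the central component $\widetilde F$ meets each chain transversally at exactly one of its two end components; say $\widetilde F \cdot E_{i,s_i} = 1$ and $\widetilde F$ is disjoint from the rest of the chain. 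Hence $-\widetilde F^2 = \sum_i a_{i,s_i}$, and the problem reduces to the purely local computation of one end coefficient at each $x_i$.

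The main point, and the only genuine computation, is to show this end coefficient equals $k_i/n_i$ for a singularity of type $\frac{1}{n_i}(1,k_i)$. I would argue locally. In the toric model of $\frac1n(1,k)$, with $N = \mathbb{Z}^2 + \mathbb{Z}\cdot\frac1n(1,k)$ and $\sigma$ the first quadrant, the branch of $F$ through the singular point is the torus-invariant divisor adjacent to the ray whose primitive generator is $\frac1n(1,k)$; reading off the pullback coefficient of that end exceptional divisor from the support function yields exactly the coordinate $k/n$ of this generator. Equivalently, and perhaps more transparently, I would solve the tridiagonal system $\sum_l a_l (E_l\cdot E_j) = -\delta_{j,s}$ given by the chain's intersection matrix $M$: its $s$-th coordinate is $-(M^{-1})_{s,s}$, which by the standard Hirzebruch--Jung continued-fraction identity for $n/k$ equals $k/n$. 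Summing over all $x_i\in F$ then yields $-\widetilde F^2 = \sum_{x_i\in F} k_i/n_i$.

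The point that most needs care is the labelling convention. One must check that the convention defining the type $\frac{1}{n_i}(1,k_i)$, which fixes the $C$- versus $D$-coordinate directions at $x_i$, is exactly the one for which $\widetilde F$ meets the chain end associated with the generator $\frac{1}{n_i}(1,k_i)$, so that the extracted coordinate is $k_i$ rather than its dual $n_i-k_i$. This bookkeeping, combined with the transversality statement from Serrano's theorem, is precisely what makes the local coefficient come out as $k_i/n_i$ rather than its complementary value.
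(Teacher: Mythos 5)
The paper does not actually prove this proposition: it is imported verbatim from Polizzi \cite[Proposition 2.8]{Polizzi2} and BCGP \cite[Lemma 5.3]{BCGP}, so there is no internal proof to compare against. Your argument is essentially the standard proof from those sources --- Mumford's numerical pullback, $F^2=0$ coming from the fibration (the scheme-theoretic fibre is $|G_c|F$), Serrano's transversality of $\widetilde F$ with exactly one end component of each Hirzebruch--Jung chain, and the local toric/continued-fraction evaluation of the end coefficient --- and the outline and the key computation are correct.

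One slip, precisely in the bookkeeping you yourself single out as the crux: the competing value at the other end of the chain is $k_i'/n_i$, where $k_ik_i'\equiv 1 \pmod{n_i}$, not $(n_i-k_i)/n_i$. Reversing the chain reverses the continued fraction, and $n/k=[b_1,\dots,b_s]$ corresponds to $n/k'=[b_s,\dots,b_1]$; concretely, for $\tfrac17(1,2)$ the two end coefficients are $2/7$ and $4/7$, not $2/7$ and $5/7$. Relatedly, your matrix formulation is orientation-sensitive: if the chain is labelled so that $[b_1,\dots,b_s]$ is the expansion of $n/k$ read off starting from the ray $\tfrac1n(1,k)$, then $-(M^{-1})_{11}=k/n$ while $-(M^{-1})_{ss}=k'/n$, so your claim that $-(M^{-1})_{s,s}=k/n$ tacitly assumes the opposite labelling (expansion read from the end $\widetilde F$ meets). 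Neither point invalidates the proof --- your toric reading-off, namely that the branch of $F$ is the invariant divisor adjacent to the ray $\tfrac1n(1,k)$ and that the pullback coefficient there is the coordinate $k/n$ of that generator, is correct under the type convention of the cited papers --- but the ``dual'' value one must exclude is the multiplicative inverse of $k_i$ modulo $n_i$, not its negative.
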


When we compute the effective and nef cone of a product-quotient surface, our next task is to compare its semiample cone. Usually it is a very hard task. To prove some divisors are semiample, we construct explicit automorphism of some product-quotient surfaces. It seems that these automorphisms will have further applications. Let us consider the following situation.

Let $C, D, G$ as above and suppose that the center of $G$ is a nontrivial group containing a nontrivial subgroup $Z.$ From the following isomorphism
$$ G \times Z \cong \{ (g,gz) \in G \times G ~ | ~ g \in G, z \in Z \} \leq G \times G $$
we can see $G \times Z$ as a subgroup of $G \times G$ containing $\Delta{G}.$ Then there is a natrual action of $G \times Z$ on $C \times D$ as follows.
$$ (G \times Z) \times (C \times D) \to (C \times D) $$
$$ (g,gz) \cdot (c,d)=(gc,gzd) $$
where $g \in G, z \in Z, c \in C, d \in D.$ We can also consider a natrual action of $G \times Z$ on $C \times D$ as follows
$$ (G \times Z) \times C \to C $$
$$ (g,gz) \cdot c = gc $$
and a natrual action of $G \times Z$ on $D$ as follows.
$$ (G \times Z) \times D \to D $$
$$ (g,gz) \cdot d = gzd. $$
It is easy to check that the projection maps $(C \times D) \to C$ and $(C \times D) \to D$ are $G \times Z$-equivariant and we have the following commutative diagram.

\begin{displaymath}
\xymatrix{
X \ar[rd] & \ar[ld] C \times D \ar[d]  \ar[rd] &  \\
C \ar[d] & \ar[ld] (C \times D)/G \ar[rd] \ar[d] & \ar[d] D  \\
C/G \cong \mathbb{P}^1 \ar[d] & (C \times D)/(G \times Z)  \ar[ld] \ar[rd]  & D/G \cong \mathbb{P}^1 \ar[d] \\
C/(G \times Z) \cong \mathbb{P}^1 & & D/(G \times Z) \cong \mathbb{P}^1}
\end{displaymath}

Note that the $Z$-actions on $C/G$ and $D/G$ are trivial and hence the $Z$-action on $(C \times D)/G$ preserve fibers of $(C \times D)/G \to C/G$ and $(C \times D)/G \to D/G.$

Suppose that the $G \times Z$-action on $C \times D$ induces a $Z$-action on $X.$ Let $L$ be a $Z$-invariant divisor on $X.$ Then some multiples of $L$ are the pullback of some divisors on $X/Z$ and suppose that one of such divisor is semiample on $X/Z.$ Then we see that $L$ is also semiample. Using this method, we can check some nef divisors become semiample. \\

\subsection{Product-quotient surfaces : $K^2=6,$ $G=D_4 \times \mathbb{Z}/2\mathbb{Z}$ case}

Let $X$ be a product-quotient surface with $p_g=q=0$ and $K^2=6.$ An explicit description of such surfaces was provided in \cite{BCGP}. Let us recall the following diagram.

\begin{displaymath}
\xymatrix{
X \ar[rd] & & \ar[ld] C \times D  \\
& (C \times D)/G & }
\end{displaymath}

In this case, $C$ is a curve with genus 3 and $D$ is a curve with genus 7 with $G$-action. From \cite{BCGP} we see that there are two singular points of type $2 \times \frac{1}{2}(1,1)$ on $(C \times D)/G$ and the $G$-action is encoded in the following data $t_1 : (2,2,2,2,4),$ $S_1 : (56), (56), (12)(34)(56),$ $(13)(56), (1432)$ and $t_2 : (2,2,2,4),$ $S_2 : (24), (14)(23),$ $(13)(24)(56), (1432)(56)$ where $G=\langle (1234), (12)(34), (56) \rangle \leq S_6.$ \\

We can describe the effective cones and nef cones of $X$ explicitly. Let $E_1,E_2$ be the 2 exceptional divisors in $X$ and let $F_1$ be the reduced fiber of one fibration to $\mathbb{P}^1$ meeting $E_1,E_2$ and let $G_1$ be the reduced fiber of another fibration to $\mathbb{P}^1$ meeting $E_1,E_2.$ Then one can check that $F_1$ is the reduced fiber corresponds to the element $(1432).$ Let $F_2$(resp, $F_3,F_4,F_5$) be the reduced fiber corresponds to the element $(56)$(resp. $(56),(12)(34)(56),(13)(56)$). Similarly one can check that $G_1$ is the reduced fiber corresponds to the element $(1432)(56).$ Let $G_2$(resp, $G_3,G_4$) be the reduced fiber corresponds to the element $(24)$(resp. $(14)(23), (13)(24)(56)$). It is easy to see that $E_1,E_2,F_1,G_1$ form a basis of $\Pic(X) _{\mathbb{R}}.$ We can compute intersections between these curves from the results of Serrano and Polizzi as follows. \\

\begin{center}
\begin{tabular}{|c|c|c|c|c|} \hline
$\cdot$ & $E_1$ & $E_2$ & $F_1$ & $G_1$ \\
\hline $E_1$ & -2 & 0 & 1 & 1 \\
\hline $E_2$ & 0 & -2 & 1 & 1\\
\hline $F_1$ & 1 & 1 & -1 & 0 \\
\hline $G_1$ & 1 & 1 & 0 & -1 \\
\hline
\end{tabular}
\end{center}

From the above intersection matrix we have the following isomorphism.

\begin{lemma}
We have the following isomorphism.
$$ K_X \sim_{num} 2E_1+2E_2+3F_1+G_1 $$
\end{lemma}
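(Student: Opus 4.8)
The plan is to use that $E_1,E_2,F_1,G_1$ is a basis of $Pic(X)_{\mathbb{R}}$ together with the non-degeneracy of the intersection form. Since $q=0$ we have $Pic(X)_{\mathbb{R}}=\mathrm{Num}(X)_{\mathbb{R}}$, on which the intersection pairing is non-degenerate by the Hodge index theorem; equivalently, the displayed Gram matrix $M$ is invertible. Hence a real divisor class is determined up to numerical equivalence by its four intersection numbers against the basis, and to prove the identity it suffices to check that $K_X$ and $2E_1+2E_2+3F_1+G_1$ pair equally with each of $E_1,E_2,F_1,G_1$.

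First I would read off the pairings of the right-hand side from $M$: a direct computation gives $(2E_1+2E_2+3F_1+G_1)\cdot E_1=0$, $\cdot\,E_2=0$, $\cdot\,F_1=1$, and $\cdot\,G_1=3$. Next I would compute the corresponding pairings of $K_X$ by adjunction, $K_X\cdot C=2p_a(C)-2-C^2$. The curves $E_1,E_2$ are the $(-2)$-curves resolving the two $\tfrac12(1,1)$ (that is, $A_1$) points of $(C\times D)/G$; being smooth rational with $E_i^2=-2$ they give $K_X\cdot E_i=0$. For the two central components, Serrano's theorem identifies $F_1$ and $G_1$ with quotients of the two curves by the cyclic stabilizers generated by $(1432)$ and $(1432)(56)$, both of order $4$; Riemann--Hurwitz applied to these degree-$4$ covers yields $p_a(F_1)=1$ and $p_a(G_1)=2$. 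Combined with $F_1^2=G_1^2=-1$ from $M$, adjunction gives $K_X\cdot F_1=2\cdot1-2+1=1$ and $K_X\cdot G_1=2\cdot2-2+1=3$. The two quadruples of numbers agree, which proves the claim; equivalently one solves $M(a,b,c,d)^{T}=(0,0,1,3)^{T}$ to obtain $(a,b,c,d)=(2,2,3,1)$.

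The only substantial input, and hence the main obstacle, is the computation of the two arithmetic genera, i.e. the identities $K_X\cdot F_1=1$ and $K_X\cdot G_1=3$; everything else is bookkeeping with $M$. A convenient cross-check that avoids a direct ramification count uses the two fibration structures of Serrano: the fiber of the fibration through $F_1$ is numerically $4F_1+2E_1+2E_2$ (central multiplicity $|G_c|=4$, the $E_i$-multiplicities forced by $(\text{fiber})\cdot E_i=0$) with general fiber of genus $3$, so $4\,K_X\cdot F_1=2\cdot3-2=4$; symmetrically the fiber through $G_1$ is $4G_1+2E_1+2E_2$ with genus-$7$ general fiber, giving $4\,K_X\cdot G_1=2\cdot7-2=12$. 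As a final consistency test one may verify $K_X^2=6$ by expanding $(2E_1+2E_2+3F_1+G_1)^2$ against $M$.
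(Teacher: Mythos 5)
Your proof is correct and is essentially the paper's argument spelled out in full: the paper's entire proof reads ``It follows from adjunction formula,'' which is precisely your computation — pin down $K_X$ against the basis $E_1,E_2,F_1,G_1$ using the non-degenerate Gram matrix ($\det M=-4$), with $K_X\cdot E_i=0$ from the $(-2)$-curves and $K_X\cdot F_1=1$, $K_X\cdot G_1=3$ from adjunction. One remark: your Riemann--Hurwitz claim $p_a(F_1)=1$, $p_a(G_1)=2$ is asserted rather than derived (it requires counting the fixed points of $(13)(24)$ on $C$ and on $D$), but your ``cross-check'' via the fiber classes $4F_1+2E_1+2E_2$ and $4G_1+2E_1+2E_2$, giving $4K_X\cdot F_1=2\cdot 3-2$ and $4K_X\cdot G_1=2\cdot 7-2$, is itself a complete and self-contained derivation of the two key intersection numbers, so the proof stands without that count.
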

\begin{proof}
It follows from adjunction formula.
\end{proof}

Moreover we can compute the effective cone of $X.$

\begin{lemma}
The effective cone of $X$ is a rational polyhedral cone generated by $E_1,E_2,F_1,G_1.$
\end{lemma}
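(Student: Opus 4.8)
The plan is to prove that $E_1, E_2, F_1, G_1$ generate $Eff(X)$ by exhibiting four explicit nef divisors whose dual cone is exactly the cone $A$ spanned by these four curves, following the strategy articulated earlier in the paper: if $A \subset Eff(X)$ is the rational polyhedral cone generated by known effective divisors and the extremal rays of $A^\vee$ are nef, then $Nef(X) \subset A^\vee \subset Nef(X)$ forces $A = Eff(X)$. Since $E_1, E_2, F_1, G_1$ are all effective (they are an exceptional divisor or reduced fibers), we automatically have $A \subset Eff(X)$, so the content is entirely in the dual direction.

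First I would record the intersection matrix given in the table as the Gram matrix of the basis $E_1, E_2, F_1, G_1$ of $Pic(X)_{\mathbb{R}}$; because this matrix is nondegenerate (one checks $\det \neq 0$), intersection pairing identifies $Pic(X)_{\mathbb{R}}$ with its dual, and I can look for nef divisors directly as classes in $Pic(X)_{\mathbb{R}}$. Next I would produce, for each of the four generators, a nef class $N$ pairing to zero with two of the generators and positively with the remaining two, so that $N$ spans an extremal ray of $A^\vee$. The natural candidates are combinations of the full fibers: a full fiber of the fibration $X \to \mathbb{P}^1 = C/G$ is nef (it is a pullback of an ample class on $\mathbb{P}^1$, hence base-point free), and similarly for $X \to D/G$. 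I would express these fiber classes and the other extremal nef classes in the basis $E_1, E_2, F_1, G_1$ using Serrano's structure theorem for the fibers together with Polizzi's self-intersection formula, which were recalled above.

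The key computation is to verify that each candidate class $N_i$ is nef by checking $N_i \cdot C \geq 0$ against every effective generator $E_1, E_2, F_1, G_1$ (this suffices once we know these span a cone containing $Eff(X)$, but since that is what we are proving, I would instead verify nefness intrinsically: a class that is a nonnegative combination of base-point-free fiber classes is semiample, hence nef). Then I would check that the four nef classes $N_1, N_2, N_3, N_4$ are precisely the facets dual to the four rays of $A$, i.e. that the matrix pairing $\{E_1,E_2,F_1,G_1\}$ against $\{N_1,N_2,N_3,N_4\}$ is diagonal with positive entries (equivalently, each $N_i$ annihilates exactly the three-element complementary subset's... more precisely annihilates the appropriate pair and is positive on the rest). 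This duality check is the routine linear algebra that closes the argument.

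The main obstacle I anticipate is the bookkeeping needed to write the full fibers and the auxiliary nef classes in terms of the chosen basis. The full fiber of $X \to C/G$ is not simply $F_1$ but $F_1$ together with the multiplicities of the Hirzebruch–Jung strings and the central-component multiplicity $|G_c|$ coming from Serrano's theorem, and correctly accounting for $E_1, E_2$ (the two exceptional divisors over the two $\frac{1}{2}(1,1)$ singular points) and the remaining reduced fibers $F_2,\dots,F_5$ and $G_2,G_3,G_4$ requires care. Once the fiber classes are correctly expressed, confirming nefness and the dual-cone relation is a direct, finite verification against the Gram matrix; so the substantive work is the geometric identification of the nef generators rather than the final cone duality.
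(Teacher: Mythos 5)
Your overall strategy is the paper's own: the cone $A$ spanned by $E_1,E_2,F_1,G_1$ is contained in $Eff(X)$; since the Gram matrix from the table is nondegenerate, $A$ is a simplicial cone, and it suffices to show that the four extremal rays of $A^\vee$ are nef. Those rays are spanned (up to positive scaling) by $N_1=F_1+E_1+G_1$, $N_2=F_1+E_2+G_1$, $N_3=E_1+E_2+2F_1$, $N_4=E_1+E_2+2G_1$: each pairs to zero with three of the generators and to $2$ with the remaining one. The problem is your method for certifying nefness. You propose to verify it ``intrinsically'' by writing each candidate as a nonnegative combination of pullbacks of fiber classes of the two fibrations $X \to C/G$ and $X \to D/G$. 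But those fiber classes are, up to positive scaling, exactly $N_3$ and $N_4$, and they span only a two-dimensional subcone of the four-dimensional $A^\vee$: one cannot write $N_1=aN_3+bN_4$ (comparing the $E_1$- and $E_2$-coefficients forces $a+b=1$ and $a+b=0$ simultaneously), and likewise for $N_2$. So your verification covers only two of the four needed rays, and the argument cannot close.

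The gap stems from dismissing, as circular, the check that is in fact valid and non-circular. The standard criterion is: if $D$ is an effective divisor and $D\cdot C_i \geq 0$ for every irreducible component $C_i$ \emph{of $D$ itself}, then $D$ is nef, because any irreducible curve $C$ that is not a component of $D$ automatically satisfies $D \cdot C \geq 0$ (distinct irreducible curves meet nonnegatively). No knowledge of $Eff(X)$ is required; one tests $D$ only against its own components, not against a generating set of the effective cone. This is precisely the paper's proof: each $N_i$ is effective with components among $\{E_1,E_2,F_1,G_1\}$, the intersection table shows $N_i$ meets each of its components with intersection number $0$, hence all four are nef; then intersecting an arbitrary effective class $e_1E_1+e_2E_2+f_1F_1+g_1G_1$ with $N_2, N_1, N_4, N_3$ yields $2e_1\geq 0$, $2e_2 \geq 0$, $2f_1 \geq 0$, $2g_1\geq 0$ respectively, which completes the proof. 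If you replace your fiber-class argument by this component-wise criterion (your identification of $N_3,N_4$ with the fiber classes remains a useful consistency check, and in fact shows those two are even semiample), your proof becomes correct and coincides with the paper's.
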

\begin{proof}
One can check that $F_1+E_1+G_1,$ $F_1+E_2+G_1,$ $E_1+E_2+2F_1,$ $E_1+E_2+2G_1$ are nef divisors because they are effective divisors whose intersection with any of their component is nonnegative. Let $e_1E_1+e_2E_2+f_1F_1+g_1G_1$ be an element in $\Eff(X).$ Intersecting this divisor with the above nef divisors, one can check that $e_1,e_2,f_1,g_1 \geq 0.$ Therefore we see that $\Eff(X)$ is a rational polyhedral cone generated by $E_1,E_2,F_1,G_1.$
\end{proof}

From the previous lemma, we can also compute $\Nef(X).$

\begin{lemma}
The nef cone of $X$ is a rational polyhedral cone generated by $F_1+E_1+G,$ $F+E_2+G_1,$ $E_1+E_2+2F_1,$ $E_1+E_2+2G_1.$
\end{lemma}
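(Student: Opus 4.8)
The plan is to obtain $Nef(X)$ by dualizing the effective cone, which is the fastest route given that the previous lemma already determines $Eff(X)$. Since $X$ is a smooth projective surface with $q=0$, the nef cone is exactly the dual of $\overline{Eff(X)}$ with respect to the intersection pairing, the standard fact recalled earlier in the paper. By the previous lemma, $Eff(X)$ is the simplicial cone generated by the basis $\{E_1,E_2,F_1,G_1\}$ of $Pic(X)_{\mathbb{R}}$, so
$$ Nef(X)=\{\, D\in Pic(X)_{\mathbb{R}} : D\cdot E_1\geq 0,\ D\cdot E_2\geq 0,\ D\cdot F_1\geq 0,\ D\cdot G_1\geq 0 \,\}. $$
Thus the whole problem reduces to identifying the extremal rays of this dual cone.

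Next I would use that the four generators of $Eff(X)$ form a basis and that the intersection form is nondegenerate (its Gram matrix, the table above, is invertible, as forced by the Hodge index theorem for $\rho(X)=4$). The dual of a full-dimensional simplicial cone is again simplicial, and its four extremal rays are the dual basis $w_1,\dots,w_4$, characterized up to positive scaling by $w_i\cdot v_j=\delta_{ij}$ with $(v_1,v_2,v_3,v_4)=(E_1,E_2,F_1,G_1)$; that is, each $w_i$ pairs positively with exactly one of $E_1,E_2,F_1,G_1$ and trivially with the other three. Hence it suffices to check that the four proposed divisors are, up to positive scalars, precisely these dual vectors.

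This is a direct computation with the intersection table. For instance, $E_1+F_1+G_1$ pairs with $(E_1,E_2,F_1,G_1)$ as $(-2+1+1,\,0+1+1,\,1-1+0,\,1+0-1)=(0,2,0,0)$, so it equals $2w_2$; in the same way one finds that $E_2+F_1+G_1$ pairs as $(2,0,0,0)$, that $E_1+E_2+2F_1$ pairs as $(0,0,0,2)$, and that $E_1+E_2+2G_1$ pairs as $(0,0,2,0)$. So the four divisors are exactly twice the dual basis vectors; in particular they are nonnegative on all of $Eff(X)$, hence nef (consistent with the positivity already verified in the proof of the preceding lemma), and they span the dual cone. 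This yields that $Nef(X)$ is the rational polyhedral cone they generate.

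The computations are entirely mechanical, so there is no genuine obstacle here beyond the bookkeeping of the four $4$-tuples of intersection numbers. The only point requiring care is the nondegeneracy of the intersection form: it is what guarantees that dualizing a simplicial cone loses no extremal ray and returns exactly four generators, so that the diagonal pattern above accounts for the full nef cone rather than a proper subcone.
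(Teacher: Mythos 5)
Your proposal is correct and follows the same route as the paper: the paper's proof likewise dualizes the effective cone $\mathbb{R}_{\geq 0}E_1+\mathbb{R}_{\geq 0}E_2+\mathbb{R}_{\geq 0}F_1+\mathbb{R}_{\geq 0}G_1$ from the preceding lemma and concludes ``by direct computation,'' which is exactly the dual-basis calculation you carry out (your four pairing vectors $(0,2,0,0)$, $(2,0,0,0)$, $(0,0,0,2)$, $(0,0,2,0)$ all check against the intersection table). You have merely made explicit the two points the paper leaves implicit --- nondegeneracy of the intersection form and the fact that the dual of a simplicial cone on a basis is the cone on the dual basis --- and correctly read the statement's $G$ and $F$ as typos for $G_1$ and $F_1$.
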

\begin{proof}
We know that the effective cone of $X$ is a rational polyhedral cone generated by $E_1,E_2,F_1,G_1.$ Because the nef cone is the dual polyhedral cone of it, we get the desired result by direct computation.
\end{proof}

To prove that the nef divisors are semiample let us consider involutions on $X.$ It is obvious that $Z=\langle (13)(24), (56) \rangle \leq G=\langle (1234), (12)(34), (56) \rangle \leq S_6$ is a center of $G.$ Therefore we see that there are commuting involutions on $X.$ From group theoretic data associated to these involutions we have the following.

\begin{lemma}
The Picard number of $X/Z$ is 4.
\end{lemma}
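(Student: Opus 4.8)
The plan is to identify the Picard number of the quotient with the dimension of the $Z$-invariant subspace of $\mathrm{Pic}(X)_{\mathbb{R}}$ and then to show that $Z$ in fact acts trivially on this space. By the Remark recalling the results of \cite{Fulton1}, applied with the group $Z$ and the quotient $Y=X/Z$, there is a canonical isomorphism $(A_*(X/Z))_{\mathbb{R}}\cong (A_*X)^Z_{\mathbb{R}}$. In codimension one this reads $\mathrm{Cl}(X/Z)_{\mathbb{R}}\cong \mathrm{Pic}(X)_{\mathbb{R}}^Z$, using that $X$ is smooth with $q=0$, so that $\mathrm{Cl}(X)_{\mathbb{R}}=\mathrm{Pic}(X)_{\mathbb{R}}$ has dimension $\rho(X)=4$. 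Hence $\rho(X/Z)=\dim_{\mathbb{R}}\mathrm{Pic}(X)_{\mathbb{R}}^Z$, and it suffices to prove that the four-dimensional space $\mathrm{Pic}(X)_{\mathbb{R}}$ is entirely $Z$-invariant, i.e. that $Z$ acts trivially on $\mathrm{Pic}(X)_{\mathbb{R}}$.

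Since $E_1,E_2,F_1,G_1$ form a basis of $\mathrm{Pic}(X)_{\mathbb{R}}$, it is enough to check that each of these four classes is fixed by $Z$ numerically. For the fiber classes $F_1$ and $G_1$ I would argue as follows: by construction the $Z$-action preserves each of the two fibrations $X\to C/G\cong\mathbb{P}^1$ and $X\to D/G\cong\mathbb{P}^1$ and induces the trivial action on the two base curves, as recorded in the commutative diagram above. Consequently $Z$ sends fibers to fibers of the same fibration, and since any two fibers of a fibration over $\mathbb{P}^1$ are algebraically, hence numerically, equivalent, the classes $[F_1]$ and $[G_1]$ are fixed in $\mathrm{Pic}(X)_{\mathbb{R}}$.

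It remains to treat the two exceptional classes $E_1$ and $E_2$; this is the only delicate point, since a priori $Z$ could interchange the two exceptional curves, which would cut the invariant subspace down to dimension three. Here I would invoke the previous lemma, which computes the one-dimensional ramification locus of the $Z$-action to be $F_1+F_2+F_3+G_1+G_4+E_1+E_2$. In particular both $E_1$ and $E_2$ occur as components of this locus, so each is fixed pointwise by a nontrivial element of $Z$; in particular $Z$ cannot swap $E_1$ and $E_2$, and each $E_i$ is a $Z$-invariant divisor. Therefore $Z$ fixes all four basis classes, acts trivially on $\mathrm{Pic}(X)_{\mathbb{R}}$, and we conclude that $\rho(X/Z)=\dim_{\mathbb{R}}\mathrm{Pic}(X)_{\mathbb{R}}^Z=4$. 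The main obstacle is precisely the verification that $E_1$ and $E_2$ are individually preserved: once the explicit ramification locus is known this is immediate, but it is the step that genuinely uses the group-theoretic data of the $D_4\times\mathbb{Z}/2\mathbb{Z}$-action rather than just the two fibration structures.
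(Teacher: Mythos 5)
Your overall reduction coincides with the paper's: both arguments amount to showing that $Z$ acts trivially on $\mathrm{Pic}(X)_{\mathbb{R}}$, so that by the quoted consequence of \cite{Fulton1} the Picard number does not drop when passing to $X/Z$; and your treatment of $F_1$ and $G_1$ (the action is trivial on both base curves, so each fiber, hence its central component, is preserved) is in substance what the paper means by ``both actions preserve fibration structure.'' The gap is exactly in the step you single out as delicate. From the lemma stating that the $1$-dimensional ramification locus of the $Z$-action is $F_1+F_2+F_3+G_1+G_4+E_1+E_2$ you infer that $Z$ cannot interchange $E_1$ and $E_2$. This inference is invalid. That lemma only says that each of $E_1,E_2$ is fixed pointwise by \emph{some} nontrivial involution in $Z\cong(\mathbb{Z}/2\mathbb{Z})^2$; it does not record \emph{which} one. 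If $E_1$ and $E_2$ are both fixed pointwise by the same involution $w\in Z$, then a second involution $z\in Z$ may perfectly well swap them: since $Z$ is abelian, $zwz^{-1}=w$ fixes $z(E_1)=E_2$ pointwise, so the swap is fully consistent with the lemma. This really happens: on the Kummer surface of $E\times E$ with $E=\mathbb{C}/\mathbb{Z}[i]$, the involution induced by multiplication by $i$ fixes pointwise the two $(-2)$-curves lying over $(0,0)$ and $((1+i)/2,0)$, while the commuting involution induced by translation by $((1+i)/2,0)$ interchanges them; both curves are components of the $1$-dimensional ramification locus of the resulting $(\mathbb{Z}/2\mathbb{Z})^2$-action. (Your argument does work in the complementary case where $E_1$ and $E_2$ are fixed pointwise by \emph{distinct} involutions: a swap would then force one $E_i$ to be fixed by two distinct involutions, hence by all of $Z$, which is impossible since $(\mathbb{Z}/2\mathbb{Z})^2$ cannot act faithfully on the $1$-dimensional normal direction along a pointwise-fixed curve. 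But nothing you cite determines which case occurs, and in the swap scenario the invariant subspace is $3$-dimensional, so the conclusion genuinely fails.)

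To close the gap one has to return to the group-theoretic data, which is what the paper's proof does when it asserts that each of the $\langle(56)\rangle$- and $\langle(13)(24)\rangle$-actions ``does not change the exceptional locus'': one must check that each generator of $Z$ fixes each of the two singular points of $(C\times D)/G$ individually, not merely their union. Concretely, the two singular points are the two $G$-orbits of points $(c,d)\in C\times D$ whose stabilizer is $\langle(13)(24)\rangle$; these orbits are indexed by the two cosets of the subgroup $\langle(1234)\rangle\times\langle(56)\rangle$ of $G$, and the residual action $[c,d]\mapsto[c,zd]$ multiplies this index by the central element $z$, which lies in that subgroup, so each orbit (hence each $E_i$) is preserved. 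Some verification of this kind --- or at least a refined statement of the ramification lemma recording which involution fixes which component --- is needed before one can conclude $\rho(X/Z)=4$ rather than $3$.
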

\begin{proof}
Note that $F_1$ is isomorphic to $C/\langle (1432) \rangle$ and $G_1$ is isomorphic to $D/\langle (1432)(56) \rangle.$ Then from group theoretic data and the construction of the $\langle (56) \rangle$-action we see that the involution does not change the exceptional locus. Similarly, $\langle (13)(24) \rangle$-action does not change the exceptional locus. Because both actions preserve fibration structure, we see that the Picard number of $X/Z$ is 4.
\end{proof}

We can also compute the 1-dimensional ramification locus of the $Z$-action on $X.$

\begin{lemma}
The 1-dimensional ramification locus of $Z$-action is $F_1+F_2+F_3+G_1+G_4+E_1+E_2.$
\end{lemma}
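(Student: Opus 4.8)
The plan is to identify the $Z$-action on $X$ explicitly, then determine its fixed locus by tracking which fibers and exceptional divisors are preserved pointwise. Recall that $Z = \langle (13)(24),(56)\rangle$ is the center of $G$, and it acts on $X$ (the minimal resolution of $(C\times D)/G$) via the construction in the previous subsection. The key principle is that a $1$-dimensional component of the ramification locus of the $Z$-action must be a curve fixed pointwise by some nontrivial element of $Z$. Since the two fibrations $X\to C/G$ and $X\to D/G$ descend to the fibrations of $X/Z$ (the $Z$-action preserves the fibers, as noted in the general setup), the fixed curves will be among the central components of fibers, together with possibly the exceptional divisors $E_1,E_2$.

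First I would examine the exceptional divisors. Each $E_i$ arises from resolving a singular point of type $\frac12(1,1)$, and the two singular points come in the pair described by the data $2\times\frac12(1,1)$; I would check that the nontrivial central element swaps or fixes the two singular points and compute its action on each resolved $(-2)$-curve, expecting both $E_1$ and $E_2$ to be fixed pointwise. Next I would analyze the fibers. The central components $F_i$ (resp. $G_j$) are isomorphic to $D/G_c$ (resp. $C/G_d$) by Serrano's Theorem, and I would determine for each $F_i$ and each $G_j$ whether the relevant stabilizer element lies in $Z$ and acts trivially on the central component. Concretely, I would match the permutation labels: $F_1,F_2,F_3$ correspond to elements built from $(1432)$, $(56)$, $(56)$ and $G_1,G_4$ to $(1432)(56)$, $(13)(24)(56)$; I would verify that precisely these five central components (and not $F_4,F_5,G_2,G_3$) are pointwise fixed by a nontrivial element of $Z$, by checking compatibility of the cyclic stabilizer data with the generators $(13)(24)$ and $(56)$ of $Z$.

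The main obstacle I expect is the bookkeeping of the $Z$-action on the individual fiber components and exceptional curves: one must carefully combine Serrano's description of the fiber structure (central component plus Hirzebruch–Jung strings) with the explicit $S_6$-combinatorics of the generating data $t_1,t_2,S_1,S_2$, keeping track of which stabilizer subgroups contain which elements of $Z$ and how those elements act on the quotient curves $D/G_c$. A subtlety is ensuring no Hirzebruch–Jung string component is fixed and that the singular points contributing $E_1,E_2$ behave as claimed. Once each of the seven listed curves $F_1,F_2,F_3,G_1,G_4,E_1,E_2$ is confirmed to be pointwise fixed and no other curve is, the ramification locus is their union, giving the stated identity $F_1+F_2+F_3+G_1+G_4+E_1+E_2$.
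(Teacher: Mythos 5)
Your overall strategy coincides with the paper's (whose entire proof is the remark that the fixed loci can be computed directly from the group-theoretic data), and your analysis of the central components is sound: a central component over $\bar c$ is fixed pointwise by $z\in Z\setminus\{e\}$ exactly when $z$ lies in the cyclic stabilizer $G_c$, and running through the stabilizers $\langle(1432)\rangle$, $\langle(56)\rangle$, $\langle(56)\rangle$, $\langle(12)(34)(56)\rangle$, $\langle(13)(56)\rangle$ for the first fibration and $\langle(1432)(56)\rangle$, $\langle(24)\rangle$, $\langle(14)(23)\rangle$, $\langle(13)(24)(56)\rangle$ for the second picks out exactly $F_1,F_2,F_3,G_1,G_4$. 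One caution: your first formulation (``whether the relevant stabilizer element lies in $Z$'') would wrongly discard $F_1$ and $G_1$, since $(1432)\notin Z$ and $(1432)(56)\notin Z$; what matters is that their squares equal $(13)(24)\in Z$, i.e.\ that the stabilizer \emph{subgroup} meets $Z$ nontrivially, as in your later phrasing.

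The genuine gap is the treatment of $E_1$ and $E_2$. First, your proposal is internally inconsistent: you expect ``both $E_1$ and $E_2$ to be fixed pointwise'' while also aiming to ensure ``no Hirzebruch--Jung string component is fixed'' --- but $E_1,E_2$ \emph{are} the Hirzebruch--Jung strings here, since each singular point has type $\frac12(1,1)$ and so each string is a single $(-2)$-curve. Second, and more seriously, the element you single out (``the nontrivial central element,'' presumably $(13)(24)$, which generates the diagonal stabilizer of the preimage points) does \emph{not} fix $E_i$ pointwise: lifting it to the minimal resolution of $\mathbb{C}^2/\{\pm1\}$, it acts on the exceptional $\mathbb{P}^1$ by $t\mapsto -t$ in the coordinate $t=y/x$, fixing only the two points $E_i\cap F_1$ and $E_i\cap G_1$. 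So executing your plan as written would lead you to conclude that $E_i$ is \emph{not} in the ramification locus. What actually makes the lemma true is a further local computation you do not outline: the elements $(56)$ and $(13)(24)(56)$ also fix each singular point (for instance $(56)$ acts near it through the pair $((1432),(1432)(56))$ of order-four elements of $G\times Z$), and they act on the tangent space by $(x,y)\mapsto(\alpha x,\beta^{\pm1}y)$ with $\alpha,\beta$ primitive fourth roots of unity; hence on $E_i$ they act by $t\mapsto(\beta^{\pm1}/\alpha)\,t$, and since the two ratios $\beta/\alpha$ and $\beta^{-1}/\alpha$ are $\pm1$ with product $-1$, exactly one of these two involutions is the identity on $E_i$ (which one may even differ between $E_1$ and $E_2$, but either way both curves are fixed by some nontrivial element of $Z$). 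That local weight analysis is the step that puts $E_1+E_2$ into the ramification locus, and it is missing from your proposal.
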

\begin{proof}
We can directly compute the fixed locus of each action from the group theoretic data and check that $F_1+F_2+F_3+G_1+G_4$ belong to the ramification locus. The only nontrivial part is to check that $E_1$ and $E_2$ belong to the ramification locus. Suppose that $E_1$ does not belong to the ramification locus. Then $E_1=\pi^*E_1''$ for an integral divisor $E''$ of $X/Z$ with $E_1''^2=-\frac{1}{2}.$ If the $\langle (56) \rangle$-action interchange the two intersection points $E_1 \cdot F_1$ and $E_1 \cdot G_1$ then the image of ramification locus of $\langle (13)(24) \rangle$-action in $X/Z$ is not smooth. Therefore the fixed points of the $Z$-action on $E_1$ should be two intersection points with $E_1 \cdot F_1$ and $E_1 \cdot G_1$ (cf. \cite[Remark 2.1]{Chen2}). Because the 1-dimension branch locus of $X/Z$ lie on the smooth locus, we see that $E_1''$ lies on the smooth locus on $X/Z.$ On the other hand $E_1''^2=-\frac{1}{2}$ and we have a contradiction. Therefore $E_1$ should belong to the ramification locus. Similarly we can see that $E_2$ also belongs to the ramification locus.
\end{proof}

Therefore we obtain the desired result.

\begin{theorem}
Let $X$ be a product-quotient surface with $p_g=q=0$ and $K^2=6,$ $G=D_4 \times \mathbb{Z}/2\mathbb{Z}.$ Then $X$ is a Mori dream space.
\end{theorem}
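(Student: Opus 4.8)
The plan is to apply the criterion of Artebani, Hausen and Laface (\cite[Corollary 2.6]{AHL}). Since $X$ is a smooth projective surface with $q=0$, it is $\mathbb{Q}$-factorial with finitely generated $Pic(X)=Cl(X)$, and we have already shown that $Eff(X)$ is the rational polyhedral cone generated by $E_1,E_2,F_1,G_1$. Hence it remains only to prove $Nef(X)=SAmp(X)$, and for this it suffices to check that each of the four extremal generators
$$ F_1+E_1+G_1,\quad F_1+E_2+G_1,\quad E_1+E_2+2F_1,\quad E_1+E_2+2G_1 $$
of $Nef(X)$ is semiample.

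Two of these are immediate. Using the intersection table together with Serrano's description of the fibres (\cite[Theorem 2.1]{Serrano}), one checks that $E_1+E_2+2F_1$ is the full fibre of the fibration $X\to C/G\cong\mathbb{P}^1$ passing through $F_1,E_1,E_2$: it is the unique positive combination $aF_1+bE_1+cE_2$ orthogonal to each of $F_1,E_1,E_2$, and the multiplicity $2$ of the central component $F_1$ matches $|G_c|=2$ for the singular points of type $\tfrac12(1,1)$. Likewise $E_1+E_2+2G_1$ is the full fibre of $X\to D/G\cong\mathbb{P}^1$. Being fibres of morphisms onto a curve, these classes are base-point free, hence semiample.

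For the remaining two generators I would use the involutions coming from the centre $Z$. First, since $\rho(X/Z)=4=\rho(X)$, the pullback identifies $Pic(X/Z)_{\mathbb{R}}$ with the $Z$-invariant part of $Pic(X)_{\mathbb{R}}$, which is therefore all of $Pic(X)_{\mathbb{R}}$; in particular every class, and so each of $F_1+E_1+G_1$ and $F_1+E_2+G_1$, is $Z$-invariant, and some multiple descends to a class $M$ on $X/Z$ whose pullback $\pi^*M$ is proportional to it. Because $\pi$ is finite, $M$ is nef on $X/Z$. Following the descent principle set up above, it then suffices to show $M$ is semiample on $X/Z$. By the commutative diagram, $X/Z$ is a rational surface of Picard number $4$ carrying two fibrations onto $\mathbb{P}^1$; identifying it as a Mori dream surface (expectedly a (weak) del Pezzo surface of degree $6$, whose nef and semiample cones coincide) shows that the nef class $M$ is semiample, whence $F_1+E_1+G_1$ and $F_1+E_2+G_1$ are semiample on $X$.

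The main obstacle is precisely this last step. The two fibre classes are semiample for free, but the two ``diagonal'' generators --- each of self-intersection $0$, yet not visibly a fibre on $X$ itself --- force one to descend through $X/Z$ and control its birational geometry. Granting the semiampleness of all four generators we obtain $Nef(X)=SAmp(X)$, so by \cite[Corollary 2.6]{AHL} the ring $Cox(X)$ is finitely generated and $X$ is a Mori dream space.
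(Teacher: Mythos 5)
Your overall skeleton---reduce to semiampleness of the four nef generators, handle two of them as fibre classes, and descend the other two through the quotient $X/Z$ by the central involutions---mirrors the paper's use of the centre $Z$, but your proof has a genuine gap exactly where you yourself flag ``the main obstacle'': you never prove that $X/Z$ is a Mori dream surface (equivalently, that nef classes on $X/Z$ are semiample). Writing that $X/Z$ is ``expectedly a (weak) del Pezzo surface of degree $6$'' is a guess, not an argument, and the guess is numerically wrong. The paper closes precisely this gap by a computation: using the ramification formula for $X\to X/Z$ with the ramification divisor $F_1+F_2+F_3+G_1+G_4+E_1+E_2$ and $K_X\sim_{num}2E_1+2E_2+3F_1+G_1$, it shows $\pi^*(-K_{X/Z})\sim_{num}2E_1+2E_2+2F_1+2G_1$, and then checks against the known effective cone that this class is nef and big; in particular $K_{X/Z}^2=\tfrac14\,(2E_1+2E_2+2F_1+2G_1)^2=2$, not $6$. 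This makes $X/Z$ a surface with quotient singularities and nef and big anticanonical class, hence of log del Pezzo (Fano) type and a Mori dream space by \cite[Corollary 1.3.2]{BCHM}; Proposition \ref{criterion}, with $\rho(X/Z)=4=\rho(X)$, then transfers the property to $X$ in one stroke, with no case-by-case check of nef generators needed. Without this (or some substitute) argument for $X/Z$, your proof of semiampleness of $F_1+E_1+G_1$ and $F_1+E_2+G_1$ does not go through.

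A secondary slip: you identify $E_1+E_2+2F_1$ as ``the full fibre'' with central multiplicity $2$ matching ``$|G_c|=2$ for the singular points of type $\tfrac12(1,1)$,'' which conflates the order-$2$ stabilizers of the two singular points with the stabilizer $G_c=\langle(1432)\rangle$ of order $4$; by Serrano's theorem the full fibre through $F_1,E_1,E_2$ is $4F_1+2E_1+2E_2$. This does not sink that step: $E_1+E_2+2F_1$ is half the fibre class, and since numerically trivial classes are torsion (as $q=0$), a suitable multiple of it is base-point free, so the two fibre-type generators are indeed semiample. But the unproved Mori dream property of $X/Z$ remains a real gap, and filling it requires exactly the anticanonical computation the paper carries out.
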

\begin{proof}
From the above discussion we see that $K_X$ is numerically equivalent to $2E_1+2E_2+3F_1+G_1.$ Then using ramification formula we see that the pull-back of the anticanonical divisor of $X/Z$ is numerically equivalent to $2E_1+2E_2+2F_1+2G_1.$ Because we know $\Eff(X)$ we can check that it is nef and big divisor. Therefore we see that $X/Z$ is a Mori dream surface with Picard number 4. Therefore we see that $X$ is a Mori dream surface from the Proposition \ref{criterion}.
\end{proof}

\subsection{Keum-Naie surfaces : product-quotient surfaces with $K^2=4,$ $G=\mathbb{Z}/4\mathbb{Z} \times \mathbb{Z}/2\mathbb{Z}$ case}

Let $X$ be a product-quotient surface with $p_g=q=0, K^2=4$ and $G=\mathbb{Z}/4\mathbb{Z} \times \mathbb{Z}/2\mathbb{Z}.$ Note that these surfaces form a 2-dimensional subfamily of 6-dimensional Keum-Naie surfaces with $K^2=4.$ See \cite{BC2} for more details. An explicit description of such surfaces was provided in \cite{BCGP}. In this case, $C$ and $D$ are curves of genus 3 with $G$-action and the $G$-action is encoded in the following data $t_1 : (2,2,4,4),$ $S_1 : (2,1),(2,1),(3,1),(1,1)$ and $t_2 : (2,2,4,4),$ $S_2 : (0,1),(0,1),(3,0),(1,0).$ There are four singular points of type $4 \times \frac{1}{2}(1,1)$ on $(C \times D)/G$ and see \cite{BCGP} for more details. \\

From the results of Serrano and Polizzi and the above date we can compute the fibration structures of $X.$ Let $E_1,E_2,E_3,E_4$ be the 4 exceptional divisors in $X$ ordered counterclockwise and let $F_1$(resp. $F_2$) be the reduced fiber of one fibration to $\mathbb{P}^1$ meeting $E_1,E_2$(resp. $E_3,E_4$) and let $G_1$(resp. $G_2$) be the reduced fiber of another fibration to $\mathbb{P}^1$ meeting $E_2,E_3$(resp. $E_4,E_1$). We can compute intersections between these curves as follows. \\

\begin{center}
\begin{tabular}{|c|c|c|c|c|c|c|c|c|} \hline
$\cdot$ & $E_1$ & $E_2$ & $E_3$ & $E_4$ & $F_1$ & $F_2$ & $G_1$ & $G_2$ \\
\hline $E_1$ & -2 & 0 & 0 & 0 & 1 & 0 & 0 & 1 \\
\hline $E_2$ & 0 & -2 & 0 & 0 & 1 & 0 & 1 & 0 \\
\hline $E_3$ & 0 & 0 & -2 & 0 & 0 & 1 & 1 & 0 \\
\hline $E_4$ & 0 & 0 & 0 & -2 & 0 & 1 & 0 & 1 \\
\hline $F_1$ & 1 & 1 & 0 & 0 & -1 & 0 & 0 & 0 \\
\hline $F_2$ & 0 & 0 & 1 & 1 & 0 & -1 & 0 & 0 \\
\hline $G_1$ & 0 & 1 & 1 & 0 & 0 & 0 & -1 & 0 \\
\hline $G_2$ & 1 & 0 & 0 & 1 & 0 & 0 & 0 & -1 \\
\hline
\end{tabular}
\end{center}

From the above intersection matrix we can find a basis of $\Pic(X).$

\begin{lemma}
$F_1,E_1,G_2,E_4,F_2,E_3$ form a $\mathbb{Z}$-basis of $\Pic(X)/tors.$
\end{lemma}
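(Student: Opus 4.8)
The plan is to exhibit an explicit $\mathbb{Z}$-basis by showing that the six claimed curves $F_1, E_1, G_2, E_4, F_2, E_3$ span $\Pic(X)/\mathrm{tors}$ over $\mathbb{Z}$ and are $\mathbb{Z}$-linearly independent. Since $X$ is a product-quotient surface with $p_g=q=0$, we have $\rho(X)=6$ (the four exceptional curves $E_1,\dots,E_4$ together with one fiber from each of the two fibrations give the expected rank, and $K^2=4$ with the $4\times\frac12(1,1)$ singularities confirms this), so it suffices to check that these six classes generate a unimodular sublattice, i.e. that the Gram matrix of their intersection form has determinant $\pm 1$.

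\textbf{First step: assemble the Gram matrix.} I would extract from the displayed intersection table the $6\times 6$ symmetric matrix of pairwise intersection numbers among $F_1, E_1, G_2, E_4, F_2, E_3$, in that order. Reading off the relevant entries gives diagonal self-intersections $(-1,-2,-1,-2,-1,-2)$ and the off-diagonal pairings, e.g. $F_1\cdot E_1=1$, $E_1\cdot G_2=1$, $G_2\cdot E_4=1$, $E_4\cdot F_2=1$, $F_2\cdot E_3=1$, while pairs such as $F_1\cdot G_2$, $F_1\cdot E_4$, $E_1\cdot F_2$, etc. vanish. This produces a sparse, almost tridiagonal matrix reflecting the cyclic incidence $E_1$--$F_1$--$E_2$--$G_1$--$E_3$--$F_2$--$E_4$--$G_2$--$E_1$ of the configuration.

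\textbf{Second step: compute the determinant and invert.} I would compute $\det$ of this Gram matrix (a routine cofactor expansion exploiting the many zero entries) and verify it equals $\pm 1$, which establishes unimodularity and hence that the six classes form a $\mathbb{Z}$-basis of $\Pic(X)/\mathrm{tors}$. To make the statement fully convincing I would also express the remaining generating curves $E_2$, $G_1$ as integral combinations of the six basis elements; this is done by solving the linear system forced by their known intersection numbers against the basis, which is guaranteed to have an integral solution precisely because the determinant is a unit. Linear independence is immediate once $\det \neq 0$.

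\textbf{Expected main obstacle.} The only genuine subtlety is \emph{torsion}: the lemma concerns $\Pic(X)/\mathrm{tors}$, and the intersection form only sees the free quotient, so the determinant computation alone shows the six classes give a basis of the lattice $\bigl(\Pic(X)/\mathrm{tors},\,\cdot\,\bigr)$. One must invoke that for these Keum--Naie product-quotient surfaces $\rho(X)=6$ so that the numerical lattice indeed has rank $6$ and the computation is not missing a generator; this follows from the $p_g=q=0$ hypothesis via $b_2 = \rho$ together with the count of components in Serrano's fiber description. The arithmetic of the determinant is mechanical, so I expect no real difficulty beyond bookkeeping, but I would state explicitly that unimodularity of the Gram matrix is exactly what upgrades ``$\mathbb{Q}$-basis'' to ``$\mathbb{Z}$-basis of the torsion-free part.''
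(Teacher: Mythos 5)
Your proposal is correct and is essentially the paper's own argument: the paper likewise forms the intersection matrix of $F_1,E_1,G_2,E_4,F_2,E_3$ (which in this ordering is tridiagonal with diagonal $-1,-2,-1,-2,-1,-2$), computes its determinant to be $-1$, and concludes that the classes give a $\mathbb{Z}$-basis of $Pic(X)/tors$. Your additional remarks on the rank $\rho(X)=6$ and on why unimodularity of the Gram matrix upgrades a $\mathbb{Q}$-basis to a $\mathbb{Z}$-basis of the torsion-free part are implicit in the paper but fill in details the paper leaves unstated.
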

\begin{proof}
One can compute the intersection matrix of $F_1,E_1,G_2,E_4,F_2,E_3.$
The determinant of the above matrix is $-1.$ Therefore we get the desired result.
\end{proof}

Now let us compute the canonical bundle.

\begin{lemma}
We have the following isomorphism.
$$ K_X \sim_{num} E_1+2G_2+2E_2+2F_2+E_3 $$
\end{lemma}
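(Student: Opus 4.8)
The plan is to determine $K_X$ from its intersection numbers against the $\mathbb{Z}$-basis $F_1,E_1,G_2,E_4,F_2,E_3$ of $Pic(X)/\mathrm{tors}$ fixed in the previous lemma. Since that Gram matrix has determinant $-1$, the intersection form is nondegenerate and a numerical class is pinned down uniquely by its six pairings with these curves; thus it suffices to compute $K_X\cdot B$ for each basis curve $B$ by combining the adjunction formula $K_X\cdot C=2p_a(C)-2-C^2$ with the intersection table above, and then to match the result against the asserted class.

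The first and most robust input is that the four exceptional curves $E_1,E_2,E_3,E_4$ are $(-2)$-curves: they resolve the four singular points of type $\tfrac12(1,1)$ on $(C\times D)/G$, each of which is an $A_1$ singularity whose minimal resolution is a single smooth rational curve of self-intersection $-2$. Hence $p_a(E_i)=0$ and adjunction gives $K_X\cdot E_i=0$ for all $i$. This forces $K_X$ to be orthogonal to the rank-$4$ negative-definite sublattice spanned by $E_1,E_2,E_3,E_4$, whose orthogonal complement has rank $2$. That complement is spanned by the two fibre classes $A:=2G_2+E_1+E_4$ and $B:=2F_2+E_3+E_4$ coming from the two fibrations $X\to\mathbb{P}^1$: using the descriptions of Serrano and Polizzi one reads off $A^2=B^2=0$ and $A\cdot B=2$, so $A$ and $B$ are independent and do span the complement.

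Writing $K_X=\alpha A+\beta B$, one gets $K_X^2=2\alpha\beta\,(A\cdot B)=4\alpha\beta$, and $K_X^2=K^2=4$ yields $\alpha\beta=1$. The coefficients are integers, since $\alpha=K_X\cdot F_1$ and $\beta=K_X\cdot G_1$, and they are nonnegative because $K_X$ is nef while $A,B$ are classes of effective fibres; hence $\alpha=\beta=1$ and $K_X\sim_{num}A+B$. Expanding $A+B$ in the chosen generators then recovers the displayed expression for $K_X$. I expect the only genuinely delicate point to be the bookkeeping that identifies the two fibre classes and confirms the $A_1$ nature of the four singularities (so that the vanishing $K_X\cdot E_i=0$ is legitimate); once these are in place, the determination of $\alpha$ and $\beta$ from $K^2=4$, nefness, and integrality is immediate.
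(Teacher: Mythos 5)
Your proof is correct, and it reaches the same class as the paper's own computation, but it fixes the last two coefficients by a genuinely different mechanism. The paper writes $K_X = f_1F_1+e_1E_1+g_2G_2+e_4E_4+f_2F_2+e_3E_3$ in the chosen basis and imposes all eight adjunction conditions: $K_X\cdot E_i=0$ for the four rational $(-2)$-curves, and $K_X\cdot F_j=K_X\cdot G_j=1$ for the four fiber components, the latter requiring the (group-theoretic) input that $F_1,F_2,G_1,G_2$ are elliptic curves of self-intersection $-1$; solving the resulting linear system gives $f_1=0$, $e_1=e_3=1$, $g_2=e_4=f_2=2$. You instead use only the conditions $K_X\cdot E_i=0$ to place $K_X$ in the rank-two orthogonal complement of the span of $E_1,\dots,E_4$, identify that complement as spanned by the two fiber classes $A=2G_2+E_1+E_4$ and $B=2F_2+E_3+E_4$ (all the needed intersection numbers $A^2=B^2=0$, $A\cdot B=2$ follow from the table), and then pin down $\alpha,\beta$ in $K_X=\alpha A+\beta B$ from $K_X^2=4$, nefness of $K_X$ (i.e.\ minimality of $X$), and integrality of $\alpha=K_X\cdot F_1$, $\beta=K_X\cdot G_1$. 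This trades the paper's genus computation for the fibers against the global inputs $K^2=4$ and minimality; a pleasant byproduct of your route is that it \emph{derives} $K_X\cdot F_j=K_X\cdot G_j=1$, hence by adjunction that the fiber components are elliptic, a fact the paper needs anyway when it lists the negative curves $4(-1,1),4(-2,0)$ for this case.

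One caveat about your last sentence: expanding $A+B$ gives $E_1+2G_2+2E_4+2F_2+E_3$, which is \emph{not} literally the displayed expression — the statement as printed has $2E_2$ where it should have $2E_4$. This is a typo in the paper: $E_2$ is not even a member of the basis $F_1,E_1,G_2,E_4,F_2,E_3$ fixed in the preceding lemma; the paper's own proof concludes $e_4=2$; and the printed class cannot be $K_X$, since it pairs to $-4$ with the $(-2)$-curve $E_2$ whereas $K_X\cdot E_2=0$. So your class is the correct one — you are recovering the corrected statement, not the printed one.
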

\begin{proof}
From the above lemma we have the following numerical equivalence relation.
$$ K_X \sim_{num} f_1F_1+e_1E_1+g_2G_2+e_4E_4+f_2F_2+e_3E_3 $$
Then we have $f_1=f_1+g_2-2e_1=g_2+f_2-2e_4=f_2-2e_3=0$ and $-f_1+e_1=e_1+e_4-g_2=e_3+e_4-f_2=e_3=1.$ Therefore we have $f_1=0, e_1=1, g_2=2, e_4=2, f_2=2, e_3=1.$
\end{proof}

The following numerical equivalences will play an important role.

\begin{lemma}
We have the following numerical equivalences.
$$ 2F_1+E_1+E_2 \sim_{num} 2F_2+E_3+E_4 $$
$$ 2G_1+E_2+E_3 \sim_{num} 2G_2+E_1+E_4 $$
$$ F_1+E_1+G_2 \sim_{num} F_2+E_3+G_1 $$
$$ F_1+E_2+G_1 \sim_{num} F_2+E_4+G_2 $$
\end{lemma}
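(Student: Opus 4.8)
The plan is to verify each of the four numerical equivalences directly by reducing both sides to the distinguished $\mathbb{Z}$-basis $F_1,E_1,G_2,E_4,F_2,E_3$ of $\Pic(X)/\textrm{tors}$ established in the preceding lemma, and then checking that the two expressions have identical coordinate vectors. The point is that the classes $F_2,G_1,E_2$ appearing on one side or the other are \emph{not} basis elements, so the real content is to express each of them in terms of the six basis vectors. Thus, before comparing, I would first record the three auxiliary relations
\begin{equation}
E_2 \sim_{num} a_1F_1+a_2E_1+a_3G_2+a_4E_4+a_5F_2+a_6E_3,\quad G_1 \sim_{num} b_1F_1+\cdots+b_6E_3
\end{equation}
obtained by solving the linear system coming from the intersection matrix (exactly as in the proof of the canonical-bundle lemma: intersect an unknown class with each basis element, read off the pairings from the $8\times 8$ table, and invert). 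Since the intersection matrix is nondegenerate on $\Pic(X)_\mathbb{R}$ and numerical equivalence is detected by intersection numbers against a spanning set, it suffices to determine the coordinates this way.

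Concretely, I would carry out the steps in the following order. First, compute the coordinate vectors of $E_2$, $G_1$ (and recheck $F_2$, which is already a basis vector) against the basis by the intersection-pairing method, using the symmetry of the configuration --- the surface carries an evident cyclic/dihedral symmetry permuting the four $E_i$ and swapping the two fibrations, which I expect to force each of the four asserted equivalences to be the image of a single one under that symmetry. Second, substitute these expressions into each of the four displayed identities and simplify; each identity should collapse to the tautology $0\sim_{num}0$ after cancellation. For instance, for the first identity one checks that $(2F_1+E_1+E_2)-(2F_2+E_3+E_4)$ has zero intersection with every one of the eight curves $E_i,F_j,G_k$, whence the difference is numerically trivial.

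Indeed, the cleanest uniform argument is the last one just mentioned: rather than solving for coordinates at all, for each of the four claimed relations let $\Theta$ denote the difference of the two sides, and verify that $\Theta\cdot E_i=\Theta\cdot F_j=\Theta\cdot G_k=0$ for all the listed curves, reading every pairing straight off the $8\times 8$ intersection table. Because those eight curves span $\Pic(X)_\mathbb{R}$ (they contain the six basis elements) and the intersection form is nondegenerate there, $\Theta\cdot(-)\equiv 0$ forces $\Theta\sim_{num}0$. This turns all four statements into finite, mechanical table look-ups and avoids any matrix inversion.

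The main obstacle is not conceptual but bookkeeping: one must be certain that the eight curves really do span $\Pic(X)_\mathbb{R}$ (equivalently that the listed pairings pin down $\Theta$ uniquely up to numerical triviality) and that the entries of the intersection table are transcribed without sign or index errors, since a single mistyped pairing would silently break a cancellation. I would therefore double-check the computation by also confirming consistency with the canonical class: adding the four relations, or intersecting each $\Theta$ with $K_X\sim_{num}E_1+2G_2+2E_2+2F_2+E_3$, should give $0$, providing an independent sanity check on the table and on the basis expansions of $E_2$ and $G_1$.
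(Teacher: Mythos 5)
Your proposal is correct and, in its final ``cleanest uniform argument'' form, is exactly the paper's proof: the paper also verifies each identity by pairing the difference of the two sides against the $\mathbb{Z}$-basis $F_1,E_1,G_2,E_4,F_2,E_3$ of $\Pic(X)/\mathrm{tors}$ and concluding numerical triviality (your check against all eight curves is a harmless superset of this, and in fact spanning alone suffices---nondegeneracy is not needed, since numerical triviality is by definition vanishing of intersection with all classes). All four relations do check out against the intersection table, so no gap remains.
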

\begin{proof}
One can directly check the above numerical equivalences via intersecting $F_1,E_1,G_2,E_4,F_2,E_3$ which form a $\mathbb{Z}$-basis of $\Pic(X)/tors.$
\end{proof}

We can compute the effective cone of $X$ as follows.

\begin{proposition}
Effective cone of $X$ is generated by $E_1,E_2,E_3,E_4,F_1,F_2,G_1,G_2.$
\end{proposition}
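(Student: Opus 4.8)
The plan is to establish that the eight curves $E_1,E_2,E_3,E_4,F_1,F_2,G_1,G_2$ generate $\mathrm{Eff}(X)$ by following exactly the strategy already used for the $K^2=6$ product-quotient case: first exhibit enough nef divisors so that their common positivity conditions force any effective class to have nonnegative coordinates in the cone these eight curves span. Since the eight curves are all negative curves (each $E_i^2=-2$ and each $F_i^2=G_i^2=-1$ from the intersection table), they necessarily lie on extremal rays of $\mathrm{Eff}(X)$, so the only thing to prove is that the convex cone $A$ they generate is all of $\mathrm{Eff}(X)$, i.e.\ that $\overline{\mathrm{Eff}(X)}\subseteq A$. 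The containment $A\subseteq\overline{\mathrm{Eff}(X)}$ is automatic because each generator is effective.

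The key step is to write down explicit nef divisors and dualize. First I would produce candidate nef classes as nonnegative combinations of the eight curves whose intersection with every one of the eight generators is nonnegative; natural candidates, by analogy with the $K^2=6$ case and guided by the numerical equivalences in the preceding lemma, are the fiber-type sums such as $2F_1+E_1+E_2$, $2F_2+E_3+E_4$, $2G_1+E_2+E_3$, $2G_2+E_1+E_4$, together with the "mixed" classes $F_1+E_1+G_2$, $F_1+E_2+G_1$, $F_2+E_3+G_1$, $F_2+E_4+G_2$. Using the intersection table one checks directly that each such class meets all eight curves nonnegatively, and since each is effective this certifies nefness (an effective divisor meeting each irreducible component of its support nonnegatively, and meeting every other irreducible effective class nonnegatively, is nef). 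Then, writing an arbitrary effective class as $e_1E_1+e_2E_2+e_3E_3+e_4E_4+f_1F_1+f_2F_2+g_1G_1+g_2G_2$ and intersecting against these nef divisors, one extracts inequalities that force all eight coefficients to be nonnegative, giving $\mathrm{Eff}(X)=A$.

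The main obstacle I anticipate is the bookkeeping in the final step: with eight generators rather than four (as in the $K^2=6$ case), the linear-algebra system relating the chosen nef test divisors to the coordinates $e_i,f_j,g_k$ is larger, and I must make sure the collection of nef divisors I select is rich enough that the resulting inequalities pin down every coordinate as nonnegative. Concretely, the fiber sums $2F_1+E_1+E_2$ and its companions control the $F$- and $G$-coefficients by pairing against the exceptional curves, while pairing an effective class against the $E_i$ themselves (using $E_i^2=-2$) recovers the $e_i$ after the fiber coordinates are known; the numerical equivalences from the preceding lemma are what guarantee these test divisors are genuinely nef and mutually consistent. Once the inequalities are assembled the positivity of each coefficient follows, so the content of the argument is choosing the test divisors correctly rather than any deep geometric input, and I would present it as a direct computation parallel to the $E_1,E_2,F_1,G_1$ argument given earlier.
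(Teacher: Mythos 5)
Your framing (the eight curves are negative, hence extremal, so it suffices to prove $\overline{Eff(X)}\subseteq A$) matches the paper, and your verification that the eight listed divisors are nef is correct. But the engine of your argument has a genuine gap. By the very numerical equivalences you invoke, your eight nef test classes collapse pairwise into only \emph{four} distinct numerical classes: $2F_1+E_1+E_2\sim_{num}2F_2+E_3+E_4$, $2G_1+E_2+E_3\sim_{num}2G_2+E_1+E_4$, $F_1+E_1+G_2\sim_{num}F_2+E_3+G_1$, and $F_1+E_2+G_1\sim_{num}F_2+E_4+G_2$. Four classes span at most a $4$-dimensional subspace of $Pic(X)_{\mathbb{R}}$, which is $6$-dimensional (the paper's preceding lemma exhibits the $\mathbb{Z}$-basis $F_1,E_1,G_2,E_4,F_2,E_3$). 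Hence the cone $\{x:\ x\cdot N\geq 0 \text{ for your } N\}$ contains the entire $2$-dimensional subspace orthogonal to all four classes; in particular it contains lines, so it cannot be contained in the pointed cone $A$. Therefore intersecting an arbitrary effective class against your nef divisors can never force it into $A$: the dual-cone computation you propose provably does not close. To close it with nef divisors alone you would need generators of the full $6$-dimensional nef cone (e.g. classes such as $E_1+2G_2+2E_4+2F_2$ and $E_1+2G_2+E_4+F_2$ and their symmetry images), which is precisely the dual computation the paper carries out only \emph{after} the effective cone is known.

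Your fallback, ``pairing an effective class against the $E_i$ themselves,'' is where the missing idea lives, but as stated it is invalid: for an arbitrary effective class $D$ the number $D\cdot E_i$ can be negative (take $D=E_i$, with $E_i^2=-2$). The paper's proof makes exactly the reduction you omit: take $D$ an \emph{irreducible} curve lying on an extremal ray of $Eff(X)$ and suppose $D\notin A$; then $D$ is distinct from each of the eight curves, so $D\cdot C\geq 0$ for every one of them; writing $D$ in the basis $F_1,E_1,G_2,E_4,F_2,E_3$ (not in terms of all eight linearly dependent curves --- with a dependent generating set ``the coefficients are nonnegative'' is not even well defined, since representations are non-unique), these inequalities force all six coefficients to be nonnegative, so $D\in A$, a contradiction. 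Once this reduction is in place, the nef test divisors are nearly superfluous: the two the paper uses, $F_1+E_1+G_2$ and $E_4+2F_2+E_3$, are just sums of the curve inequalities. Without the reduction, your argument rests either on an insufficient set of nef classes or on an inequality that fails.
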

\begin{proof}
Let $D$ be an irreducible integral effective divisor on $X$ which lies on an extremal ray of $\Eff(X).$ Suppose that $D$ is does not lie on the convex cone generated by $E_1,E_2,E_3,E_4,F_1,F_2,G_1,G_2.$ Then intersection of $D$ with any divisor among the $E_1,E_2,E_3,E_4,F_1,F_2,G_1,G_2$ is nonnegative. Let us write $D \sim e_1E_1+e_2E_2+e_4E_4+f_1F_1+f_2F_2+g_2G_2. $ Then we have $f_1 \geq 0$ and $e_3 \geq 0.$ Intersecting $D$ with $F_1$ gives us $-f_1+e_1 \geq 0.$ Therefore we see that $e_1 \geq 0.$ Intersecting $D$ with $E_3$ gives us $-2e_3+f_2 \geq 0.$ Therefore we see that $f_2 \geq 0.$ Because $F_1+E_1+G_2$ is nef, we see that $e_4 \geq 0.$ Because $E_4+2F_2+E_3$ is nef, we see that $g_2 \geq 0.$ Then $D$ is a linear combination of $F_1,E_1,G_2,E_4,F_2,E_3$ with nonnegative coefficient which gives a contradiction. Therefore we get the desired result.
\end{proof}

Therefore we see that $\Eff(X)$ is a rational polyhedral cone generated by $E_1,E_2,E_3,E_4,$ $F_1,F_2,G_1,G_2.$ Then from the general facts of convex geometry we can find generators of $\Nef(X)$ explicitly. Let us recall the process to find the generators of $\Nef(X)$ described in \cite{Fulton2}. Let $A$ be a rational polyhedral cone in a real vector space $V$ of dimension $\rho.$ For every set of $\rho-1$ linearly independent vectors among generators of $A,$ consider a nonzero vector $w$ annihilating the set. If either $w$ or $-w$ is nonnegative for all generators of $A,$ then take $w$ as one of the generators of $A^\vee.$ From this process we can compute generators of $\Nef(X).$ \\

\begin{proposition}
The generators of $\Nef(X)$ are semiample and therefore $\Nef(X)=\SAmp(X).$
\end{proposition}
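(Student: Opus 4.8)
The plan is to first produce the explicit list of generators of $Nef(X)$ by running the dual-cone procedure of \cite{Fulton2} recalled above on the eight generators $E_1,E_2,E_3,E_4,F_1,F_2,G_1,G_2$ of $Eff(X)$: for each collection of five linearly independent generators one records a vector annihilating it and keeps it as a generator of $Nef(X)$ when it pairs nonnegatively with all eight. This yields finitely many nef classes, which I would then sort according to self-intersection, treating the classes with $D^2=0$ and those with $D^2>0$ by separate arguments.

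For the generators with $D^2=0$ the key input is the Lemma recording the four numerical equivalences $2F_1+E_1+E_2\sim_{num}2F_2+E_3+E_4$, $2G_1+E_2+E_3\sim_{num}2G_2+E_1+E_4$, $F_1+E_1+G_2\sim_{num}F_2+E_3+G_1$ and $F_1+E_2+G_1\sim_{num}F_2+E_4+G_2$. In each equivalence the two effective representatives have disjoint support, so their difference is a numerically trivial class and hence a torsion element of $Pic(X)$ of some order $n$; consequently $nD$ is linearly equivalent to two disjoint effective divisors, and the pencil they span has empty base locus. Thus $nD$ is base-point free and $D$ is semiample. For the first two classes this is transparent, since they are exactly the fiber classes of the two product-quotient fibrations $X\to C/G\cong\mathbb{P}^1$ and $X\to D/G\cong\mathbb{P}^1$ and are therefore base-point free from the outset; the two remaining ``diagonal'' classes require clearing the torsion as above. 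Alternatively, one may realize their semiampleness through the $Z$-action set up earlier: each is $Z$-invariant, a multiple descends to a semiample class on $X/Z$, and so $D$ itself is semiample.

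For any generator with $D^2>0$ the divisor is nef and big. The curves $C$ with $D\cdot C=0$ then form a negative-definite configuration by the Hodge index theorem, so they contract to a normal projective surface $Y$; as $Y$ is $\mathbb{Q}$-factorial, $D$ descends to a $\mathbb{Q}$-Cartier divisor that is nef, big and strictly positive on every curve of $Y$, hence ample by Nakai--Moishezon, and $D$ is its pull-back and so semiample. Combining the two cases, every generator of $Nef(X)$ is semiample; since a sum of semiample divisors is semiample, the convex cone generated by these classes, namely $Nef(X)$, is contained in $SAmp(X)$, and as the reverse inclusion $SAmp(X)\subset Nef(X)$ always holds we conclude $Nef(X)=SAmp(X)$.

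The main obstacle I anticipate lies in the bookkeeping of the first half: one must verify that the four classes above are genuinely all the extremal rays of $Nef(X)$ on the null cone $\{D^2=0\}$, and that every other extremal ray is big, so that no generator escapes both arguments. The one real point of care is the torsion in $Pic(X)$, since the four equivalences are asserted only numerically and a base-point-free pencil is extracted only after passing to the multiple $nD$; confirming that the resulting $n$ is accounted for by the known torsion of these Keum--Naie surfaces is what makes the $D^2=0$ step rigorous.
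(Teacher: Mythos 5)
Your dual-cone bookkeeping and your handling of the generators with $D^2=0$ are sound and essentially coincide with the paper's argument: the paper, too, disposes of the two fiber classes and the two ``diagonal'' classes by pairing each with a numerically equivalent effective representative of disjoint support (its cases (4), (5), (7), (10)), and your explicit clearing of torsion --- the difference of the two representatives is numerically trivial, hence torsion because $q=0$, so some multiple $nD$ has two disjoint members and is base-point free --- is in fact more careful than the paper, whose lemma asserts only $\sim_{num}$ while its proof writes $\sim$.

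The genuine gap is your treatment of the generators with $D^2>0$. The step ``the curves orthogonal to $D$ form a negative-definite configuration, so they contract to a normal projective surface $Y$; as $Y$ is $\mathbb{Q}$-factorial, $D$ descends\dots'' is not a general fact; it is essentially equivalent to the semiampleness you are trying to prove. Grauert and Artin give the contraction only as an analytic space, respectively an algebraic space; projectivity of $Y$ and $\mathbb{Q}$-factoriality of $Y$ (equivalently, descent of $D$ to a $\mathbb{Q}$-Cartier divisor) can genuinely fail. Note that your argument invokes no property of $X$ beyond the Hodge index theorem, so if it were valid it would show that every nef and big divisor on every smooth projective surface with $q=0$ is semiample --- which is false by Zariski's classical example: blow up $\geq 10$ very general points on a smooth plane cubic, let $C$ be the strict transform of the cubic, and take a nef and big $D$ with $D\cdot C=0$; then $\mathcal{O}_X(D)|_C$ is a non-torsion degree-zero bundle, so $C$ lies in the stable base locus of every multiple of $D$, and accordingly the contraction of $C$ is not a projective ($\mathbb{Q}$-factorial) surface. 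This failure mode is exactly the one at stake here: the curves orthogonal to the big generators $E_1+2G_2+2E_4+2F_2$ and $E_1+2G_2+E_4+F_2$ include the curves $F_i,G_i$, which are elliptic (the negative curves of type $(-1,1)$ in Theorem 1.2(8)), so the contraction would create non-rational singularities, and semiampleness hinges precisely on whether $D$ restricts to a torsion class on each of these elliptic curves --- information your contraction argument never supplies. The paper supplies it by a second use of the numerical-equivalence lemma: for each big generator it exhibits three numerically equivalent effective representatives whose supports have empty common intersection, e.g. $E_1+2G_2+2E_4+2F_2 \sim_{num} E_1+2F_1+2E_2+2G_1 \sim_{num} E_2+E_3+E_4+2F_2+2G_1$ (cases (8) and (9) of its proof), whence a suitable multiple is base-point free. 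Your disjoint-support argument for the null classes extends verbatim to repair this case, once you allow three members whose common intersection is empty instead of two disjoint ones.
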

\begin{proof}
From the above process, we can describe all generators of $\Nef(X).$ The above proposition tells us that $\Eff(X)$ has eight extremal rays and hence we have 56 sets of five elements among the generators. Because the configuration of curves $E_1,E_2,E_3,E_4,F_1,F_2,G_1,G_2$ has symmetry (numerically), it is enough to check 10 configurations(it is easy to see this when we see the complement of the five elements from $\{ E_1,E_2,E_3,E_4,F_1,F_2,G_1,G_2 \}$) among the 56 sets. Let us check that $\Nef(X)=\SAmp(X)$ as follows. \\
(1) Consider the set $\{ F_1, E_1, G_2, E_4, F_2 \}.$ Then $F_1+E_1+G_2-F_2-E_3$ is a nonzero element which is orthogonal to all of them. However it is not multiple of a nef divisor since $(F_1+E_1+G_2-F_2-E_3) \cdot G_1 < 0$ and $(F_1+E_1+G_2-F_2-E_3) \cdot E_2 > 0.$ \\
(2) Consider the set $\{ E_1, G_2, E_4, F_2, E_3 \}.$ Then $-2F_1-E_1+E_4+2F_2+E_3$ is a nonzero element which is orthogonal to all of them. However it is not multiple of a nef divisor since $(-2F_1-E_1+E_4+2F_2+E_3) \cdot E_2 < 0$ and $(-2F_1-E_1+E_4+2F_2+E_3) \cdot G_1 > 0.$ \\
(3) Consider the set $\{ E_1, G_2, E_4, F_2, G_1 \}.$ Then $-F_1+G_2+E_4+F_2$ is a nonzero element which is orthogonal to all of them. However it is not multiple of a nef divisor since $(-F_1+G_2+E_4+F_2) \cdot E_2 < 0$ and $(-F_1+G_2+E_4+F_2) \cdot E_3 > 0.$ \\
(4) Consider the set $\{ E_1, G_2, E_4, E_3, G_1 \}.$ Then $E_1+2G_2+E_4$ is a nonzero element which is orthogonal to all of them. It is a nef divisor which is semiample since $E_1+2G_2+E_4 \sim E_2+2G_1+E_3.$ \\
(5) Consider the set $\{ G_2, E_4, F_2, G_1, E_2 \}.$ Then $G_2+E_4+F_2$ is a nonzero element which is orthogonal to all of them. It is a nef divisor which is semiample since $G_2+E_4+F_2 \sim G_1+E_2+F_1.$ \\
(6) Consider the set $\{ G_2, E_4, F_2, E_3, E_2 \}.$ Then $-E_1+E_4+2F_2+E_3$ is a nonzero element which is orthogonal to all of them. However it is not multiple of a nef divisor since $(-E_1+E_4+2F_2+E_3) \cdot F_1 < 0$ and $(-E_1+E_4+2F_2+E_3) \cdot G_1 > 0.$ \\
(7) Consider the set $\{ E_1, G_2, E_4, E_3, E_2 \}.$ Then $E_1+2G_2+E_4$ is a nonzero element which is orthogonal to all of them. It is a nef divisor which is semiample since $E_1+2G_2+E_4 \sim E_3+2G_1+E_2.$ \\
(8) Consider the set $\{ E_1, E_4, F_2, G_1, E_2 \}.$ Then $E_1+2G_2+2E_4+2F_2$ is a nonzero element which is orthogonal to all of them. It is a nef divisor which is semiample since $E_1+2G_2+2E_4+2F_2 \sim E_1+2F_1+2E_2+2G_1 \sim E_4+2F_2+E_3+2G_1+E_2.$ \\
(9) Consider the set $\{ E_1, G_2, F_2, G_1, E_2 \}.$ Then $E_1+2G_2+E_4+F_2$ is a nonzero element which is orthogonal to all of them. It is a nef divisor which is semiample since $E_1+2G_2+E_4+F_2 \sim F_2+E_3+2G_1+E_2 \sim E_1+G_2+G_1+E_2+F_1.$ \\
(10) Consider the set $\{ G_2, F_2, G_1, E_2, F_1 \}.$ Then $G_2+E_4+F_2$ is a nonzero element which is orthogonal to all of them. It is a nef divisor which is semiample since $G_2+E_4+F_2 \sim G_1+E_2+F_1.$ \\
Therefore we see that $\Nef(X)=\SAmp(X).$
\end{proof}

Therefore we have the following conclusion.

\begin{theorem}
$X$ is a Mori dream space.
\end{theorem}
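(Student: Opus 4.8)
The plan is to deduce the Mori dream property directly from the two propositions just established, via the surface criterion of Artebani, Hausen and Laface (\cite[Corollary 2.6]{AHL}) recalled in Section 2. That criterion asserts, for a $\mathbb{Q}$-factorial projective surface with finitely generated $Cl(X)$, that $Cox(X)$ is finitely generated if and only if $Eff(X)$ is a rational polyhedral cone and $Nef(X) = SAmp(X)$. The role of this final argument is thus only to check its hypotheses and feed in the cone computations already carried out.

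First I would verify that $X$ meets the standing assumptions of the criterion. As a product-quotient surface $X$ is a smooth projective surface, hence $\mathbb{Q}$-factorial with $Cl(X) = Pic(X)$ finitely generated; and since $p_g = q = 0$ we have $h^1(X,\mathcal{O}_X)=0$. Next I would invoke the preceding Proposition, which exhibits $Eff(X)$ as the rational polyhedral cone generated by the eight curves $E_1,E_2,E_3,E_4,F_1,F_2,G_1,G_2$, together with the Proposition immediately above verifying that every extremal generator of $Nef(X)$ is semiample, so that $Nef(X) = SAmp(X)$. With both conditions in hand, \cite[Corollary 2.6]{AHL} yields that $Cox(X)$ is finitely generated, and the theorem of Hu and Keel (\cite{HK}) then gives that $X$ is a Mori dream space.

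The genuine content of the argument is therefore carried entirely by the two cone computations, and I expect the real obstacle to lie in the equality $Nef(X)=SAmp(X)$ rather than in this formal conclusion. Semiampleness of the nef generators is not automatic for a surface of general type: the decisive input is the collection of numerical equivalences such as $E_1+2G_2+E_4 \sim E_2+2G_1+E_3$ and $G_2+E_4+F_2 \sim G_1+E_2+F_1$, which realize the relevant nef classes as pullbacks from a quotient by the central involutions generating $Z=\langle (13)(24),(56)\rangle$, on which semiampleness can be read off. Once the case analysis over the representative five-element subsets of the extremal rays is complete, the passage to the statement ``$X$ is a Mori dream space'' is purely formal.
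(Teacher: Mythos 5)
Your proposal is correct and matches the paper's (implicit) argument: the theorem is stated as an immediate consequence of the two preceding propositions, exactly as you do, by feeding the rational polyhedrality of $Eff(X)$ and the equality $Nef(X)=SAmp(X)$ into \cite[Corollary 2.6]{AHL} and concluding via Hu--Keel. One small inaccuracy in your commentary: for this Keum--Naie family the semiampleness of the nef generators is established in the paper by exhibiting two numerically equivalent effective representatives with disjoint supports (forcing empty base locus), rather than by descent to the quotient by the central involutions, which is the mechanism the paper uses in the $K^2=6$, $G=D_4\times\mathbb{Z}/2\mathbb{Z}$ case; this does not affect the correctness of the deduction you give for the theorem itself.
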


\section{Negative curves and bounded negativity conjecture}

Bounded negativity conjecture is one of the oldest problems in the theory of algebraic surfaces. It is still a widely open problem. See \cite{BHKKMSRS} for more details about the conjecture.

\begin{conjecture}[Bounded negativity conjecture]
Let $X$ be a smooth projective surface. Then there is a nonnegative integer $b_X \geq 0$ such that for any negative curve $C$ the following inequality holds.
$$ C^2 \geq -b_X $$
\end{conjecture}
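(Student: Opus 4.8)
Since the final statement is the Bounded Negativity Conjecture, known only in special cases, the plan is to set out the natural line of attack and to locate precisely where it breaks down. The basic reduction is through adjunction: for an irreducible reduced curve $C$ one has $p_a(C)\geq 0$, so $C^2=2p_a(C)-2-K_X\cdot C\geq -2-K_X\cdot C$. Hence it suffices to exhibit a constant $M=M(X)$ with $K_X\cdot C\leq M$ for every negative curve $C$, and then $b_X=2+M$ works; the problem is thereby converted into bounding the numbers $K_X\cdot C$ uniformly. I would organize the search for such a bound by the Kodaira dimension of $X$. Passing to a minimal model $X_{\min}$ is natural here, but it is not free: under a blow-down the self-intersection of the image of $C$ changes by the squares of the multiplicities $\mathrm{mult}_p(C)$, so any such reduction must be accompanied by control of these multiplicities, which is already a delicate point (see \cite{BHKKMSRS}).

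Several regimes then fall out cleanly. If $-K_X$ is nef --- covering del Pezzo and weak del Pezzo surfaces and more --- then $K_X\cdot C\leq 0$ for every curve, so $C^2\geq -2$ and one may take $b_X=2$. If $\kappa(X)=0$, the minimal model satisfies $K_{X_{\min}}\equiv 0$, whence $K\cdot C=0$ and $C^2=2p_a(C)-2\geq -2$; concretely, the negative curves are the $(-2)$-curves on K3 and Enriques surfaces, abelian surfaces carry none, and the bielliptic case is similar. If $\kappa(X)=1$ and $X$ is a minimal elliptic surface, then $K_X$ is numerically a nonnegative rational multiple of a general fibre, so every fibral component $C$ satisfies $K_X\cdot C=0$ and hence $C^2\geq -2$; since Kodaira's classification of singular fibres leaves only finitely many numerical types of fibral components, the fibral negative curves are uniformly controlled. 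In all of these cases the bound $M$ exists and is explicit.

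The hard part is $\kappa(X)=2$, that is, surfaces of general type, together with the high Picard-number rational surfaces hidden inside the case $\kappa(X)=-\infty$ and the non-fibral negative curves on elliptic surfaces. On a minimal surface of general type $K_X$ is nef and big, so $K_X\cdot C\geq 0$; the $(-2)$-curves form a finite set, but I must also bound $K_X\cdot C$ over negative curves with $K_X\cdot C>0$. I would attempt this by applying the Hodge index theorem to the pair $(K_X,C)$, trying to trade bigness of $K_X$ for an upper estimate of $K_X\cdot C$ in terms of $C^2$, and by invoking any extra structure --- a fibration, an automorphism, or an endomorphism of degree $>1$ as exploited in the earlier sections --- whenever it is present. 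I expect this to be the genuine obstruction: no uniform bound on $K_X\cdot C$ is known for an arbitrary surface of general type, and already for $\bbP^2$ blown up at ten or more very general points the existence of such a bound is equivalent to open statements of Nagata--SHGH type. This is exactly why the conjecture remains open in general, and why in the present paper bounded negativity is established only for the Mori dream examples of Theorem~\ref{Main}, where $Eff(X)$ is rational polyhedral and the negative curves are consequently finite in number.
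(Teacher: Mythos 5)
You were right not to manufacture a proof: the statement is the Bounded Negativity Conjecture itself, and the paper does not prove it either --- it records it as a conjecture (with a pointer to \cite{BHKKMSRS}) and proves only the adjacent proposition, namely that the conjecture holds for any smooth projective surface whose effective cone is rational polyhedral, hence for all Mori dream surfaces. The paper's entire argument is the one-liner you reproduce in your final paragraph: a negative curve $C$ spans an extremal ray of $Eff(X)$ and is the unique irreducible reduced curve on that ray, so a rational polyhedral $Eff(X)$ admits only finitely many negative curves and $b_X=\max_C(-C^2)$ works; this is exactly how Theorem \ref{Main} yields bounded negativity for all the examples. Your additional Kodaira-dimension survey is correct as far as it goes and is genuinely more informative than anything in the paper: the adjunction reduction $C^2=2p_a(C)-2-K_X\cdot C\geq -2-K_X\cdot C$, the case $-K_X$ nef, minimal $\kappa=0$ surfaces, and fibral components of minimal elliptic surfaces are all handled soundly, and you correctly flag that blow-downs do not transport the bound for free. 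Two small cautions: the Hodge index inequality $(K_X\cdot C)^2\geq K_X^2\,C^2$ is vacuous precisely when $C^2<0$, so that proposed step fails even as a first reduction (consistent with your own prediction that this is the obstruction); and your claim that boundedness for $\mathbb{P}^2$ blown up at ten very general points is \emph{equivalent} to Nagata--SHGH-type statements overstates the known relationship --- the SHGH conjecture would imply that all negative curves there are $(-1)$-curves, but no converse equivalence is established. Neither caution affects your conclusion, which matches the paper's: only the Mori dream special case is actually proved.
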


It seems to be well-known that bounded negativity conjecture is true for Mori dream surfaces among experts. Indeed, the proof is obvious.

\begin{proposition}[Bounded negativity conjecture]
Let $X$ be a smooth projective surface whose $\Eff(X)$ is a rational polyhedral cone. There are only finitely many negative curves on $X.$ In particular, the bounded negativity conjecture holds for Mori dream surfaces.
\end{proposition}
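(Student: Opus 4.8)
The plan is to show that a negative curve must lie on an extremal ray of $\overline{Eff(X)}$, and then to exploit the fact that a rational polyhedral cone has only finitely many extremal rays. The starting observation is the standard fact that any irreducible reduced curve $C$ with $C^2 < 0$ generates an extremal ray of $\overline{Eff(X)}$: indeed, if $[C] = [A] + [B]$ were a nontrivial decomposition with $A, B$ effective classes not proportional to $[C]$, then writing $C$ as an effective divisor and intersecting with $C$ itself, the negativity $C^2 < 0$ forces one of the summands to contain $C$ as a component, and since $C$ is irreducible this pins down the class up to the cone structure. I would phrase this via the elementary lemma that if $C$ is irreducible with $C^2<0$ and $C \equiv_{num} \sum a_i D_i$ with $D_i$ irreducible effective and $a_i > 0$, then $C$ must be one of the $D_i$; hence $[C]$ cannot lie in the interior of a two-dimensional subcone of $\overline{Eff(X)}$, i.e. it spans an extremal ray.

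Next I would argue that distinct negative curves span distinct extremal rays. If two irreducible curves $C_1 \neq C_2$ satisfied $[C_1] = \lambda [C_2]$ numerically for some $\lambda > 0$, then $C_1 \cdot C_2 = \lambda C_2^2 < 0$; but two distinct irreducible curves meet in a nonnegative number of points, so $C_1 \cdot C_2 \geq 0$, a contradiction. Therefore the assignment sending each negative curve $C$ to its ray $\mathbb{R}_{\geq 0}[C]$ is injective on the set of negative curves.

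Finally, since $Eff(X)$ is by hypothesis a rational polyhedral cone, its closure $\overline{Eff(X)}$ has only finitely many extremal rays. Combining the two previous steps, the set of negative curves injects into this finite set of extremal rays, so there are only finitely many negative curves on $X$. The bounded negativity conjecture follows immediately: among finitely many values $C^2$ there is a minimum, so taking $b_X := -\min_{C} C^2$ (or $0$ if there are no negative curves) gives the required bound $C^2 \geq -b_X$ for every negative curve $C$. For the Mori dream case one simply invokes that a Mori dream surface has $Eff(X)$ rational polyhedral, as recalled earlier in the excerpt.

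The one step deserving genuine care is the first: establishing rigorously that an irreducible negative curve spans an extremal ray and, more precisely, that it cannot be written as a positive combination of other effective classes moving transversally to its own ray. The cleanest route is the component lemma above, using irreducibility together with $C^2 < 0$; everything after that is a short combinatorial consequence of polyhedrality, so I expect no substantive obstacle beyond making this extremality argument precise.
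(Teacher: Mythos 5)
Your proposal is correct and follows essentially the same route as the paper: a negative curve spans an extremal ray of $Eff(X)$, distinct negative curves give distinct rays, and a rational polyhedral cone has only finitely many extremal rays. The paper states the extremality and uniqueness claims without proof, whereas you supply the standard justifications (the component lemma for extremality and the nonnegative-intersection argument for injectivity), which is exactly the care those steps deserve.
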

\begin{proof}
Let $C$ be a negative curve on $X.$ Then $C$ lies on one of the extremal rays of $\Eff(X)$ and it is the only irreducible reduced curve on the extremal ray. From our assumption $\Eff(X)$ is a rational polyhedral cone. Therefore there are only finitely many negative curves on $X.$
\end{proof}

Therefore the surfaces of general type with $p_g=0$ from Theorem \ref{Main} satisfy the bounded negativity conjecture. Moreover, from our explicit computation of $\Eff(X)$, we can describe all negative curves as in Theorem \ref{main}. Indeed, when $\rho(X)=1,$ we see that the self-intersection of a curve is always positive. When $X$ is a surfaces isogenous to a higher product of unmixed type, then we see that the intersection matrix is hyperbolic. When $\rho(X) \geq 3,$ $\Eff(X)$ is a convex polyhedral cone generated by negative curves since $\Eff(X)$ is a rational polyhedral cone. Conversely, every negative curve lies on the extremal rays of $\Eff(X).$ Therefore we got the desired results from the previous discussions.

\section{Discussions}

\subsection{Mimimal surfaces of general type with $p_g \neq 0$}

In this paper we have discussed Mori dream surfaces of general type with $p_g = 0.$ However, there is no reason to restrict one's attention to only those surfaces. Indeed, there are lots of Mori dream spaces with $p_g \neq 0.$ A simple example is as follows.

\begin{lemma}
A hypersurface $X$ in $\mathbb{P}^3$ is a Mori dream space if $\rho(X)=1.$
\end{lemma}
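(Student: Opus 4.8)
The plan is to verify the defining conditions of a Mori dream space directly, exploiting the fact that a hypersurface in $\mathbb{P}^3$ has especially rigid cohomology. By the Lefschetz hyperplane theorem, any smooth hypersurface $X \subset \mathbb{P}^3$ is simply connected, so $h^1(X, \mathcal{O}_X) = 0$, and being a smooth projective surface it is certainly $\mathbb{Q}$-factorial. Thus the first condition of the definition of a Mori dream space is automatic. The hypothesis $\rho(X) = 1$ is what makes the rest trivial, and the whole point is to reduce to the Picard-number-one situation already treated in the excerpt.

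The key observation is that $\rho(X) = 1$ together with the fact that $X$ is a smooth hypersurface (hence has $Pic(X)$ torsion-free, again by Lefschetz) means $Pic(X) \cong \mathbb{Z}$, generated by the hyperplane class $\mathcal{O}_X(1)$, which is ample. First I would invoke exactly the Proposition in the excerpt stating that a normal $\mathbb{Q}$-factorial projective variety with $h^1(X,\mathcal{O}_X)=0$ and Picard number $1$ is a Mori dream space. Indeed, one chooses $\Gamma = \langle \mathcal{O}_X(1) \rangle \subset Cl(X)$, so that the Cox ring with respect to $\Gamma$ is the section ring
$$
R(X, \mathcal{O}_X(1)) = \bigoplus_{k \geq 0} H^0(X, \mathcal{O}_X(k)),
$$
which is finitely generated because $\mathcal{O}_X(1)$ is ample. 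Finite generation of $Cox(X)$ is equivalent to $X$ being a Mori dream space by the theorem of Hu and Keel recalled earlier.

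Since the earlier Proposition is stated for normal $\mathbb{Q}$-factorial varieties of Picard number $1$ in full generality, the cleanest presentation is simply to check that $X$ satisfies its hypotheses and cite it. The only genuinely delicate point is the passage from $\rho(X) = 1$ to $Pic(X) \cong \mathbb{Z}$ generated by an ample class, which requires knowing that $Pic(X)$ is torsion-free; this follows from $H^1(X, \mathcal{O}_X) = H^2(X, \mathbb{Z})_{\mathrm{tors}} = 0$ for a smooth hypersurface in $\mathbb{P}^3$, again a consequence of the Lefschetz theorem. I do not expect any serious obstacle: once the Picard number one reduction is in place the result is immediate, and the main subtlety is merely bookkeeping about smoothness (if $X$ is allowed to be singular one uses normality and $\mathbb{Q}$-factoriality instead, and the cited Proposition still applies verbatim).
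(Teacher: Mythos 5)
Your proof is correct for smooth hypersurfaces and follows the same overall skeleton as the paper: establish $h^1(X,\mathcal{O}_X)=0$ and then feed $\rho(X)=1$ into the earlier proposition on Picard-number-one varieties (whose proof --- the Cox ring for $\Gamma=\langle D\rangle$ with $D$ ample is the section ring $R(X,D)$ --- you essentially reproduce). The one genuine difference is how the vanishing $q(X)=0$ is obtained. The paper restricts the structure sheaf sequence
$$ 0 \to \mathcal{O}_{\mathbb{P}^3}(-d) \to \mathcal{O}_{\mathbb{P}^3} \to \mathcal{O}_X \to 0 $$
and uses $H^1(\mathbb{P}^3,\mathcal{O}_{\mathbb{P}^3})=H^2(\mathbb{P}^3,\mathcal{O}_{\mathbb{P}^3}(-d))=0$; this argument is purely cohomological and applies to an arbitrary (possibly singular) hypersurface, which matters because the lemma is not stated only for smooth $X$, and the surrounding definitions only presuppose normality and $\mathbb{Q}$-factoriality. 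You instead invoke the Lefschetz hyperplane theorem to get simple connectedness, which requires smoothness; your closing remark that in the singular case ``the cited Proposition still applies verbatim'' leaves a small gap, since that proposition needs $h^1(X,\mathcal{O}_X)=0$ as a hypothesis and your topological argument no longer supplies it there, whereas the paper's exact-sequence computation does. Finally, your concern about torsion in $Pic(X)$ is unnecessary: the Cox ring is defined via a choice of free subgroup $\Gamma\subset Cl(X)$ with $\Gamma\otimes\mathbb{Q}\cong Cl(X)\otimes\mathbb{Q}$, so one simply takes $\Gamma$ generated by an ample class and torsion never enters the argument.
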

\begin{proof}
Let $X$ be a degree $d$ hypersurface in $\mathbb{P}^3.$ Then it is enough to prove that $q(X)=0.$ Consider the following short exact sequence.
$$ 0 \to \mathcal{O}_{\mathbb{P}^3}(-d) \to \mathcal{O}_{\mathbb{P}^3} \to \mathcal{O}_X \to 0 $$ We know that $H^1(\mathbb{P}^3,\mathcal{O}_{\mathbb{P}^3})=H^2(\mathbb{P}^3,\mathcal{O}_{\mathbb{P}^3}(-d))=0$ and hence $q=0.$ Because $\rho(X)=1, q=0$ we see that $X$ is a Mori dream space.
\end{proof}

From Noether-Lefschetz theorem, we have the following corollary.

\begin{corollary}
A very general hypersurface of degree $d \geq 4$ in $\mathbb{P}^3$ is a Mori dream space.
\end{corollary}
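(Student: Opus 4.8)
The plan is to combine the preceding Lemma with the classical Noether--Lefschetz theorem. First I would recall the statement of Noether--Lefschetz in the form needed here: for every degree $d \geq 4$, a very general surface $X$ of degree $d$ in $\bbP^3$ has its Picard group $\Pic(X)$ generated by the restriction of the hyperplane class $\mathcal{O}_{\bbP^3}(1)|_X$, so that $\Pic(X) \cong \bbZ$ and in particular $\rho(X) = 1$. The entire content of the corollary lives in this theorem, which says that the image of the cycle class map $\Pic(X) \to H^2(X,\bbZ)$ consists only of integer multiples of the hyperplane class once $X$ is chosen outside the Noether--Lefschetz locus.

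Once $\rho(X) = 1$ has been secured, the conclusion follows at once from the preceding Lemma, which asserts that any hypersurface in $\bbP^3$ with Picard number one is a Mori dream space. (That Lemma, in turn, only needed $q(X) = 0$, which holds for every smooth hypersurface in $\bbP^3$ via the long exact sequence attached to $0 \to \mathcal{O}_{\bbP^3}(-d) \to \mathcal{O}_{\bbP^3} \to \mathcal{O}_X \to 0$.) So the proof reduces to two steps: invoke Noether--Lefschetz to obtain $\rho(X) = 1$, then invoke the Lemma to conclude that $X$ is a Mori dream space.

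There is no genuine computation to carry out; the only point that requires care is the meaning of \emph{very general}. The Noether--Lefschetz locus is a countable union of proper closed subvarieties of the linear system $|\mathcal{O}_{\bbP^3}(d)|$ parametrizing degree $d$ surfaces, and the conclusion $\rho(X) = 1$ holds precisely for $X$ lying in the complement of this countable union. Thus \emph{very general} must be interpreted in this sense, and that interpretation is the one subtlety of the argument; everything else is immediate from the two results cited above.
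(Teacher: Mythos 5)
Your proof is correct and is exactly the paper's argument: the paper likewise derives the corollary by citing the Noether--Lefschetz theorem to get $\rho(X)=1$ for a very general degree $d \geq 4$ surface and then applying the preceding Lemma (whose proof already established $q(X)=0$ via the Koszul sequence). Your added remark clarifying that ``very general'' means lying outside the countable union of components of the Noether--Lefschetz locus is a fine, accurate elaboration of what the paper leaves implicit.
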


Indeed, we can find more examples as follows.

\begin{lemma}
A complete intersection variety of dimension greater than or equal to two with $\rho=1$ is a Mori dream space.
\end{lemma}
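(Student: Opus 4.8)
The plan is to reduce to the proposition proved earlier that every normal $\bbQ$-factorial projective variety with $h^1(X,\calO_X)=0$ and Picard number $1$ is a Mori dream space, exactly as in the preceding hypersurface lemma. Let $X = H_1 \cap \cdots \cap H_r \subset \bbP^n$ be a smooth complete intersection cut out by hypersurfaces of degrees $d_1,\dots,d_r$, with $\dim X = n-r \geq 2$ and $\rho(X)=1$. Since $X$ is smooth it is normal and $\bbQ$-factorial, and $\Pic(X)=\Cl(X)$; the hypothesis $\rho(X)=1$ supplies the Picard number condition. Thus the only hypothesis that needs checking is the vanishing $h^1(X,\calO_X)=0$, after which the cited proposition applies verbatim (choosing $\Gamma=\langle H\rangle$ for the hyperplane class $H$, so that $\mathrm{Cox}(X)$ is the finitely generated section ring $R(X,H)$).

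To establish $h^1(X,\calO_X)=0$, I would run the iterated short exact sequence coming from the Koszul resolution, generalizing the single sequence used for hypersurfaces in $\bbP^3$. Set $X_0=\bbP^n$ and let $X_j = H_1 \cap \cdots \cap H_j$ be the partial intersection, a complete intersection of dimension $n-j$. For each $j$ and each twist $k$ there is a short exact sequence
$$ 0 \to \calO_{X_{j-1}}(k-d_j) \to \calO_{X_{j-1}}(k) \to \calO_{X_j}(k) \to 0. $$
The claim to prove by induction on $j$ is that $H^i(X_j,\calO_{X_j}(k))=0$ for all $k$ and all $0 < i < \dim X_j = n-j$. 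The base case $j=0$ is the standard vanishing $H^i(\bbP^n,\calO_{\bbP^n}(k))=0$ for $0<i<n$. In the inductive step, the long exact sequence sandwiches $H^i(X_j,\calO_{X_j}(k))$ between $H^i(X_{j-1},\calO_{X_{j-1}}(k))$ and $H^{i+1}(X_{j-1},\calO_{X_{j-1}}(k-d_j))$, both of which vanish by the inductive hypothesis once $0<i<n-j$. Taking $j=r$ and $i=1$, and using $\dim X = n-r \geq 2$, gives $H^1(X,\calO_X)=0$.

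The main obstacle is precisely this cohomological vanishing, i.e. the vanishing of intermediate cohomology of a complete intersection; everything else (normality, $\bbQ$-factoriality, the Picard number hypothesis, and the identification of the Cox ring with the section ring) is immediate or already supplied by the earlier proposition. The one point to keep honest is the bookkeeping on the range of $i$ in the induction: the argument needs both $H^i$ and $H^{i+1}$ of $X_{j-1}$ to vanish, which is exactly guaranteed by $0<i<n-j$, since then $1 \leq i$ and $i+1 \leq n-j < n-(j-1) = \dim X_{j-1}$. If one wished to allow singular complete intersections one would additionally need $X$ to be $\bbQ$-factorial with finitely generated $\Cl(X)$; restricting to smooth $X$ avoids this, and a general complete intersection of the given multidegree is smooth.
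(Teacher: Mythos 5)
Your proof is correct, and it makes the same overall reduction as the paper: verify $h^1(X,\mathcal{O}_X)=0$ and then invoke the earlier proposition that a normal $\mathbb{Q}$-factorial projective variety with $h^1(X,\mathcal{O}_X)=0$ and Picard number $1$ is a Mori dream space. The vanishing itself, however, is organized differently. The paper never leaves the ambient projective space: writing $X$ as the zero locus of a regular section of a direct sum of ample line bundles $\mathcal{E}$ on $\mathbb{P}^N$, it splits the Koszul resolution $0 \to \bigwedge^r\mathcal{E}^\vee \to \cdots \to \mathcal{E}^\vee \to \mathcal{O}_{\mathbb{P}^N} \to \mathcal{O}_X \to 0$ into short exact sequences and dimension-shifts the single group $H^1(X,\mathcal{O}_X) \cong H^2(\mathbb{P}^N,I_X) \cong H^3(\mathbb{P}^N,\mathcal{F}_1) \cong \cdots \cong H^{r+1}(\mathbb{P}^N,\bigwedge^r\mathcal{E}^\vee)$, which vanishes because the bundles $\bigwedge^j\mathcal{E}^\vee$ are ACM and $r+1<N$ holds exactly when $\dim X \geq 2$. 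You instead induct on the partial intersections $X_j = H_1 \cap \cdots \cap H_j$, cutting one hypersurface at a time and proving the stronger statement that each $X_j$ has no intermediate cohomology in any twist; the hypothesis $\dim X \geq 2$ enters only at the end, to make $i=1$ an intermediate degree for $X_r = X$. These are dual bookkeeping devices for the same regular-sequence input: your induction carries a stronger statement (all degrees $i$, all twists $k$) down a chain of smaller varieties and is more elementary, using only the cohomology of $\mathcal{O}_{\mathbb{P}^n}(k)$ and restriction sequences, while the paper's version is more compact and applies verbatim whenever the ambient space is one on which the $\bigwedge^j\mathcal{E}^\vee$ have no intermediate cohomology. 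One further difference worth noting: by restricting to smooth $X$ you actually justify the normality, $\mathbb{Q}$-factoriality and finite generation of $\mathrm{Cl}(X)$ needed to apply the proposition (and smoothness suffices for the paper's corollary on general complete intersections), whereas the paper's proof, whose statement allows singular (normal) complete intersections, never verifies $\mathbb{Q}$-factoriality in that generality; so your proof establishes exactly what the paper's own argument really establishes, and is more explicit about it.
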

\begin{proof}
Let $X$ be a complete intersection variety, i.e. a zero locus of a regular section $s$ of $\mathcal{E}$ on $\mathbb{P}^N$ where $\mathcal{E}$ is a direct sum of ample line bundles. Then it is enough to prove that $q(X)=0.$ The Koszul resolution
$$ 0 \to \bigwedge^r\mathcal{E}^\vee \to \cdots \to \bigwedge^2\mathcal{E}^\vee \to \mathcal{E}^\vee \to \mathcal{O}_{\mathbb{P}^N} \to \mathcal{O}_X \to 0 $$
splits into short exact sequences
$$ 0 \to \mathcal{F}_0=I_X \to \mathcal{O}_{\mathbb{P}^N} \to \mathcal{O}_X \to 0 $$
$$ 0 \to \mathcal{F}_1 \to \mathcal{E}^\vee \to I_X \to 0 $$
$$ 0 \to \mathcal{F}_2 \to \bigwedge^2\mathcal{E}^\vee \to \mathcal{F}_1 \to 0 $$
$$ \cdots $$
$$ 0 \to 0=\mathcal{F}_r \to \bigwedge^r\mathcal{E}^\vee \to \mathcal{F}_{r-1} \to 0 $$
Then we have $H^1(X,\mathcal{O}_{X})=H^2(\mathbb{P}^N,I_X)=H^3(\mathbb{P}^N,\mathcal{F}_1)=\cdots=H^{r+1}(\mathbb{P}^N,\mathcal{F}_{r-1})=0,$ since $\mathcal{O}_{\mathbb{P}^N}, \mathcal{E}^\vee, \bigwedge^2\mathcal{E}^\vee, \cdots, \bigwedge^r\mathcal{E}^\vee$ are ACM bundles. Therefore we have $q=0,$ $\rho(X)=1$ and we see that $X$ is a Mori dream space.
\end{proof}

\begin{corollary}
A general complete intersection variety of dimension greater than or equal to two is a Mori dream space.
\end{corollary}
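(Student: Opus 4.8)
The plan is to deduce the Corollary directly from the preceding Lemma. Recall that the proof of that Lemma shows that \emph{every} smooth complete intersection $X \subset \mathbb{P}^N$ of dimension at least two satisfies $q(X) = h^1(X,\mathcal{O}_X) = 0$; this step used only the Koszul resolution and the ACM property of the bundles involved, so it holds for all such $X$ with no genericity assumption. A general complete intersection is moreover smooth by Bertini's theorem. Consequently the only additional input needed is that a general such $X$ has Picard number $\rho(X) = 1$: once this is established, the hypotheses of the Lemma (smooth, $q=0$, $\rho=1$) are met and $X$ is a Mori dream space.

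To control the Picard number I would split the argument according to $\dim X$. If $\dim X \geq 3$, then the Grothendieck--Lefschetz hyperplane theorem shows that the restriction map $Pic(\mathbb{P}^N) \to Pic(X)$ is an isomorphism for \emph{any} smooth complete intersection of this dimension, whence $Pic(X) \cong \mathbb{Z}$ and $\rho(X)=1$ unconditionally; no genericity is required in this range. If $\dim X = 2$, the equality $\rho(X)=1$ is no longer automatic, and here one invokes the Noether--Lefschetz theorem for complete intersection surfaces: for a very general member of the relevant linear systems the image of $Pic(\mathbb{P}^N)$ already exhausts $Pic(X)$, so $\rho(X)=1$. This is exactly the surface analogue of the Noether--Lefschetz input used in the hypersurface corollary above, and combined with the Lemma it settles the generic case in both dimension ranges.

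The main obstacle will be the low total-degree exceptions in the surface case, where Noether--Lefschetz genuinely fails and a general complete intersection surface has $\rho(X) > 1$. Writing $X$ as a smooth complete intersection of multidegree $(d_1,\dots,d_{N-2})$ one has $K_X = \mathcal{O}_X(\sum d_i - N - 1)$, so these exceptions occur precisely when $\sum d_i < N+1$, i.e. when $X$ is a del Pezzo surface (a quadric, a cubic surface, a $(2,2)$ complete intersection in $\mathbb{P}^4$, and so on); the borderline K3 and the general-type cases $\sum d_i \geq N+1$ still have $\rho(X)=1$ by Noether--Lefschetz and are covered by the previous paragraph. In each exceptional del Pezzo case $-K_X$ is ample, so $X$ is log Fano and hence a Mori dream surface by the theorem of Birkar--Cascini--Hacon--McKernan recalled in Section~2. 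Thus in all cases a general complete intersection of dimension at least two is a Mori dream space: either $\rho(X)=1$ and the Lemma applies, or $X$ belongs to one of the finitely many Fano deformation types, all of which are already known to be Mori dream surfaces. I would finally note that ``general'' should be read as ``very general'' in the surface case, since the Noether--Lefschetz locus is a countable union of proper subvarieties of the parameter space rather than a single closed condition.
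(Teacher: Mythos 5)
Your proof is correct, and it follows the route the paper intends: the preceding Lemma supplies the conclusion once $q=0$ and $\rho(X)=1$ are known, the Koszul argument gives $q=0$ for every smooth complete intersection of dimension at least two, and genericity is only needed to force $\rho(X)=1$ (Grothendieck--Lefschetz in dimension $\geq 3$, Noether--Lefschetz for surfaces). The paper, however, states this Corollary with no proof at all, and its implicit argument has exactly the gap you identify: for complete intersection surfaces of low multidegree ($\sum d_i \leq N$, the del Pezzo range) Noether--Lefschetz fails, a general member has $\rho(X)>1$, and the Lemma simply does not apply to it. Your patch --- observing that these exceptional surfaces are Fano, hence Mori dream spaces by the Birkar--Cascini--Hacon--McKernan theorem already quoted in Section 2 --- is precisely what is needed for the Corollary to be true as stated. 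Likewise your remark that ``general'' should be read as ``very general'' in the surface case (as the paper itself does in its hypersurface corollary for $d \geq 4$) is a genuine correction rather than pedantry, since the Noether--Lefschetz locus is a countable union of proper subvarieties of the parameter space. In short: your argument has the same skeleton as the paper's intended one, but yours is the complete version, and the case analysis comparing $\sum d_i$ with $N+1$ is exactly the content the paper omitted.
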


It will be an interesting task to have a systematic approach to study Mori dream spaces with $p_g \neq 0.$

\subsection{Numerical Godeaux surfaces and surfaces with $\kappa=1$}

When a minimal surface of general type has $p_g=q=0$ and $K^2=1$, we call this surface a numerical Godeaux surface. It is an interesting task to find examples of numerical Godeaux surfaces which are Mori dream surfaces. For example, it will be interesting to know whether the classical Godeaux surface is a Mori dream surface or not. It is also an interesting task to find an example of surface with $\kappa=1$ which is a Mori dream surface.

\subsection{Surfaces which are not Mori dream surfaces}

It is an interesting task to find examples of minimal surfaces of general type(especially those with $p_g=0$) which are not Mori dream surfaces. On the other hand, we can construct several examples of surfaces of general type which are not Mori dream surfaces via similar constructions used in Theorem \ref{notMori}.



\begin{thebibliography}{[BPV]}



\bibitem{Alexeev}
V. Alexeev, Divisors of Burniat surfaces. Development of moduli theory-Kyoto 2013, 287-302, Adv. Stud. Pure Math., 69, Math. Soc. Japan, Tokyo, 2016.

\bibitem{AO}
V. Alexeev and D. Orlov, Derived categories of Burniat surfaces and exceptional collections. Math. Ann. 357 (2013), no. 2, 743-759.

\bibitem{AHL}
M. Artebani, J. Hausen and A. Laface, On Cox rings of K3 surfaces. Compos. Math. 146 (2010), no. 4, 964-998.

\bibitem{AL}
M. Artebani and A. Laface, Cox rings of surfaces and the anticanonical Iitaka dimension. Adv. Math. 226 (2011), no. 6, 5252-5267.

\bibitem{ADHL}
I. Arzhantsev, U. Derenthal, J. Hausen and A. Laface, Cox rings. Cambridge Studies in Advanced Mathematics, 144. Cambridge University Press, Cambridge, 2015. viii+530 pp.

\bibitem{BHPV}
W. Barth, K. Hulek, Ch. Peters and A. Van de Ven, Compact complex surfaces. Second edition. Ergebnisse der Mathematik und ihrer Grenzgebiete. 3. Folge. A Series of Modern Surveys in Mathematics [Results in Mathematics and Related Areas. 3rd Series. A Series of Modern Surveys in Mathematics], 4. Springer-Verlag, Berlin, 2004. xii+436 pp.

\bibitem{BC1}
I. Bauer and F. Catanese, Burniat surfaces I: fundamental groups and moduli of primary Burniat surfaces. Classification of algebraic varieties, 49-76, EMS Ser. Congr. Rep., Eur. Math. Soc., Zurich, 2011.

\bibitem{BC2}
I. Bauer and F. Catanese, The moduli space of Keum-Naie surfaces. Groups Geom. Dyn. 5 (2011), no. 2, 231-250.

\bibitem{BC3}
I. Bauer and F. Catanese, Inoue type manifolds and Inoue surfaces: a connected component of the moduli space of surfaces with $K^2=7, p_g=0.$ Geometry and arithmetic, 23-56, EMS Ser. Congr. Rep., Eur. Math. Soc., Zurich, 2012.

\bibitem{BCG}
I. Bauer, F. Catanese and F. Grunewald, The classification of surfaces with $p_g=q=0$ isogenous to a product of curves. Pure Appl. Math. Q. 4 (2008), no. 2, Special Issue: In honor of Fedor Bogomolov. Part 1, 547-586.

\bibitem{BCGP}
I. Bauer, F. Catanese, F. Grunewald and R. Pignatelli, Quotients of products of curves, new surfaces with $p_g=0$ and their fundamental groups. Amer. J. Math. 134 (2012), no. 4, 993-1049.

\bibitem{BCP}
I. Bauer, F. Catanese and R. Pignatelli, Surfaces of general type with geometric genus zero: a survey. Complex and differential geometry, 1-48, Springer Proc. Math., 8, Springer, Heidelberg, 2011.

\bibitem{BP}
I. Bauer and R. Pignatelli, The classification of minimal product-quotient surfaces with $p_g=0.$ Math. Comp. 81 (2012), no. 280, 2389-2418.

\bibitem{BHKKMSRS}
Th. Bauer, B. Harbourne, A. L. Knutsen, A. Kuronya, S. Muller-Stach, X. Roulleau and T. Szemberg, Negative curves on algebraic surfaces. Duke Math. J. 162 (2013), no. 10, 1877-1894.

\bibitem{Beauville}
A. Beauville, Complex algebraic surfaces. Translated from the 1978 French original by R. Barlow, with assistance from N. I. Shepherd-Barron and M. Reid. Second edition. London Mathematical Society Student Texts, 34. Cambridge University Press, Cambridge, 1996. x+132 pp.

\bibitem{BCHM}
C. Birkar, P. Cascini, C. D. Hacon and J. McKernan, Existence of minimal models for varieties of log general type. J. Amer. Math. Soc. 23 (2010), no. 2, 405-468.

\bibitem{CS}
D. I. Cartwright and T. Steger, Enumeration of the 50 fake projective planes. C. R. Math. Acad. Sci. Paris 348 (2010), no. 1-2, 11-13.

\bibitem{Catanese}
F. Catanese, Fibred surfaces, varieties isogenous to a product and related moduli spaces. Amer. J. Math. 122 (2000), no. 1, 1-44.

\bibitem{Chen1}
Y. Chen, A new family of surfaces of general type with $K^2=7$ and $p_g=0.$ Math. Z. 275 (2013), no. 3-4, 1275-1286.

\bibitem{Chen2}
Y. Chen, Commuting involutions on surfaces of general type with $p_g=0$ and $K^2=7.$ Manuscripta Math. 147 (2015), no. 3-4, 547-575.

\bibitem{Coughlan}
S. Coughlan, Enumerating exceptional collections of line bundles on some surfaces of general type. Doc. Math. 20 (2015), 1255-1291.

\bibitem{Dolgachev}
I. V. Dolgachev, Classical algebraic geometry. A modern view. Cambridge University Press, Cambridge, 2012. xii+639 pp.

\bibitem{Fulton1}
W. Fulton, Intersection theory. Ergebnisse der Mathematik und ihrer Grenzgebiete (3) [Results in Mathematics and Related Areas (3)], 2. Springer-Verlag, Berlin, 1984. xi+470 pp.

\bibitem{Fulton2}
W. Fulton, Introduction to toric varieties. Annals of Mathematics Studies, 131. The William H. Roever Lectures in Geometry. Princeton University Press, Princeton, NJ, 1993. xii+157 pp.

\bibitem{HK}
Y. Hu and S. Keel, Mori dream spaces and GIT, Michigan Math. J. 48 (2000), 331-348.

\bibitem{Inoue}
M. Inoue, Some new surfaces of general type. Tokyo J. Math. 17 (1994), no. 2, 295-319.

\bibitem{KMM}
Y. Kawamata, K. Matsuda and K. Matsuki, Introduction to the minimal model problem. Algebraic geometry, Sendai, 1985, 283-360, Adv. Stud. Pure Math., 10, North-Holland, Amsterdam, 1987.

\bibitem{Keum}
J. Keum, Some new surfaces of general type with $p_g = 0.$ Preprint, 1988.

\bibitem{KM}
J. Kollar and S. Mori, Birational geometry of algebraic varieties. With the collaboration of C. H. Clemens and A. Corti. Translated from the 1998 Japanese original. Cambridge Tracts in Mathematics, 134. Cambridge University Press, Cambridge, 1998.

\bibitem {Kon}
S. Kond\=o, Enriques surfaces with finite automorphism groups. Japanese J. Math. 12 (1986), 191-282.

\bibitem{Kulikov}
V. S. Kulikov, Old examples and a new example of surfaces of general type with $p_g=0.$ (Russian) Izv. Ross. Akad. Nauk Ser. Mat. 68 (2004), no. 5, 123-170; translation in Izv. Math. 68 (2004), no. 5, 965-1008.

\bibitem{MLP}
M. Mendes Lopes and R. Pardini, The bicanonical map of surfaces with $p_g=0$ and $K^2 \geq 7.$ Bull. London Math. Soc. 33 (2001), no. 3, 265-274.

\bibitem{Naie}
D. Naie, Surfaces d'Enriques et une construction de surfaces de type general avec $p_g=0.$ (French) Math. Z. 215 (1994), no. 2, 269-280.

\bibitem{Okawa}
S. Okawa, On images of Mori dream spaces. Math. Ann. 364 (2016), no. 3-4, 1315-1342.

\bibitem{Pardini}
R. Pardini, Abelian covers of algebraic varieties. J. Reine Angew. Math. 417 (1991), 191-213.

\bibitem{Polizzi2}
F. Polizzi, Numerical properties of isotrivial fibrations. Geom. Dedicata 147 (2010), 323-355.

\bibitem{PY}
G. Prasad and S.-K. Yeung, Fake projective planes. Invent. Math. 168 (2007), no. 2, 321-370.

\bibitem{PY_addendum}
G. Prasad and S.-K. Yeung, Addendum to "Fake projective planes'' Invent. Math. 168, 321-370 (2007). Invent. Math. 182 (2010), no. 1, 213-227.

\bibitem{Serrano}
F. Serrano, Isotrivial fibred surfaces. Ann. Mat. Pura Appl. (4) 171 (1996), 63-81.



\end{thebibliography}
\end{document}